\newtheorem{theorem}{Theorem}
\newtheorem{corollary}[theorem]{Corollary}
\newtheorem{definition}{Definition}
\newtheorem{example}{Example}
\newtheorem{lemma}[theorem]{Lemma}
\newtheorem{notation}{Notation}
\newtheorem{proposition}[theorem]{Proposition}
\newtheorem{remark}{Remark}
\newenvironment{proof}[1][Proof]{\textbf{#1.} }{\ \rule{0.5em}{0.5em}}
\begin{document}

\title{Microlocal Asymptotic Analysis in Algebras of Generalized Functions}
\author{Antoine Delcroix\\Equipe Analyse Alg\'{e}brique Non Lin\'{e}aire -- \textit{Laboratoire A O C}\\Facult\'{e} des sciences - Universit\'{e} des Antilles et de la Guyane,\\BP 250, 97157 Pointe \`{a} Pitre Cedex, Guadeloupe (France) \\Antoine.Delcroix@univ-ag.fr
\and Jean-Andr\'{e} Marti\\Equipe Analyse Alg\'{e}brique Non Lin\'{e}aire -- \textit{Laboratoire G T S I}\\Facult\'{e} des sciences -- Universit\'{e} des Antilles et de la Guyane,\\BP 250, 97157 Pointe \`{a} Pitre Cedex, Guadeloupe (France) \\Jean-Andre.Marti@univ-ag.fr
\and Michael Oberguggenberger\thanks{Supported by FWF (Austria), grant Y237.}\\Institute of Basic Sciences in Engineering \ -- Unit of Engineering Mathematics\\Faculty of Civil Engineering -- University of Innsbruck, \\Technikerstra\ss e 13 -- 6020 Innsbruck, Austria \\Michael.Oberguggenberger@uibk.ac.at}
\maketitle

\begin{abstract}
We introduce a new type of local and microlocal asymptotic analysis in
algebras of generalized functions, based on the presheaf properties of those
algebras and on the properties of their elements with respect to a
regularizing parameter. Contrary to the more classical frequential analysis
based on the Fourier transform, we can describe a singular asymptotic spectrum
which has good properties with respect to nonlinear operations. In this spirit
we give several examples of propagation of singularities through nonlinear operators.

\end{abstract}

\noindent\textbf{Keywords:} microlocal analysis, generalized functions,
nonlinear operators, presheaf, propagation of singularities, singular
spectrum.\medskip

\noindent\textbf{Mathematics Subject Classification (2000): 35A18, 35A27,
46E10, 46F30, } \textbf{46T30}\medskip


\section{Introduction}

Various nonlinear theories of generalized functions have been developed over
the past twenty years, with contributions by many authors. These theories have
in common that the space of distributions is enlarged or embedded into
algebras so that nonlinear operations on distributions become possible. These
methods have been especially efficient in formulating and solving nonlinear
differential problems with irregular data.\smallskip

Most of the algebras of generalized functions possess the structure of sheaves
or presheaves, which may contain some sub(pre)sheaves with particular
properties. For example, the sheaf $\mathcal{G}$ of the special Colombeau
algebras \cite{JFC3,GKOS,NePiSc} contains the subsheaf $\mathcal{G}^{\infty}$
of so-called regular sections of $\mathcal{G}$ such that the embedding:
$\mathcal{G}^{\infty}\rightarrow\mathcal{G}$ is the natural extension of the
classical one: $\mathrm{C}^{\infty}\rightarrow\mathcal{D}^{\prime}$. This
notion of regularity leads to $\mathcal{G}^{\infty}$-local or microlocal
analysis of generalized functions, extending the classical results on the
$\mathrm{C}^{\infty}$-microlocal analysis of distributions due to
H\"{o}rmander \cite{HorPDOT1}. This concept has been slightly extended in
\cite{ADRReg} to less restrictive kinds of measuring regularity. In
\cite{JAM3}, microlocal regularity theory in analytic and Gevrey classes has
been generalized to algebras of generalized functions. Many results on
propagation of singularities and pseudodifferential techniques have been
obtained during the last years (see
\cite{GaGrOb,GarHorm,HorDeH,HorKun,HorObPil}). Nevertheless, these results are
still mainly limited to linear cases, since they use frequential methods based
on the Fourier transform.\smallskip

In this paper, we develop a new type of asymptotic local and microlocal
analysis of generalized functions in the framework of $(\mathcal{C}%
,\mathcal{E},\mathcal{P})$-algebras \cite{JAM0,JAM1}, following first steps
undertaken in \cite{JAM0}. An example of the construction is given by taking
$\mathcal{G}$ as a special case of a $(\mathcal{C},\mathcal{E},\mathcal{P}%
)$-structure (see Subsection \ref{SPSSubSecDC} for details). Let $\mathcal{F}$
be a subsheaf of vector spaces (or algebras) of $\mathcal{G}$ and
$(u_{\varepsilon})_{\varepsilon}$ a representative of $u\in\mathcal{G}\left(
\Omega\right)  $ for some open set $\Omega\subset\mathbb{R}^{n}$. We first
define $\mathcal{O}_{\mathcal{G}}^{\mathcal{F}}\left(  u\right)  $ as the set
of all $x\in\Omega$ such that $u_{\varepsilon}$ tends to a section of
$\mathcal{F}$ above some neighborhood of $x$. The $\mathcal{F}$-singular
support of $u$ is $\Omega\backslash\mathcal{O}_{\mathcal{G}}^{\mathcal{F}%
}\left(  u\right)  $. For fixed $x$ and $u$, $N_{x}(u)$ is the set of all
$r\in\mathbb{R}_{+}$ such that $\varepsilon^{r}u_{\varepsilon}$ tends to a
section of $\mathcal{F}$ above some neighborhood of $x$. The $\mathcal{F}%
$-singular spectrum of $u$ is the set of all $\left(  x,r\right)  \in
\Omega\times\mathbb{R}_{+}$ such that $r\in\mathbb{R}_{+}\backslash N_{x}(u)$.
It gives a spectral decomposition of the $\mathcal{F}$-singular support of
$u$.\smallskip

This asymptotic analysis is extended to $(\mathcal{C},\mathcal{E}%
,\mathcal{P})$-algebras. This gives the general asymptotic framework, in which
the net $\left(  \varepsilon^{r}\right)  _{\varepsilon}$ is replaced by any
net $a$ satisfying some technical conditions, leading to the concept of the
$(a,\mathcal{F})$-singular asymptotic spectrum.\ The main advantage is that
this asymptotic analysis is compatible with the algebraic structure of the
$(\mathcal{C},\mathcal{E},\mathcal{P})$-algebras.\ Thus, the $(a,\mathcal{F}%
)$-singular asymptotic spectrum inherits good properties with respect to
nonlinear operations (Theorem \ref{SPSAFSASProd} and Corollary
\ref{SPSAFSASPow}).\smallskip

The paper is organized as follows. In Section \ref{SecPDLPA}, we introduce the
sheaves of $(\mathcal{C},\mathcal{E},\mathcal{P})$-algebras and develop the
local asymptotic analysis. Section \ref{SPSMicAn} is devoted to the
$(a,\mathcal{F})$\textit{-}microlocal analysis and specially to the nonlinear
properties of the $(a,\mathcal{F})$-singular asymptotic spectrum.\ In Section
\ref{SPSPDE} various examples of the propagation of singularities through non
linear differential operators are given.

\section{Preliminary definitions and local parametric analysis\label{SecPDLPA}%
}

\subsection{The presheaves of $(\mathcal{C},\mathcal{E},\mathcal{P}%
)$-algebras: the algebraic structure \label{SPSSubSecCEP}}

%

\setcounter{equation}{6}%

We begin by recalling the notions from \cite{JAM0, JAM1} that form the basis
for our study.\smallskip

\noindent(a) Let:

\noindent$\left(  1\right)  $~$\Lambda$ be a set of indices;

\noindent$\left(  2\right)  ~A$ be a solid subring of the ring $\mathbb{K}%
^{\Lambda}$\ ($\mathbb{K=R}$ or $\mathbb{C}$); this means that whenever
$(\left\vert s_{\lambda}\right\vert )_{\lambda}\leq(\left\vert r_{\lambda
}\right\vert )_{\lambda}$\ for some $\left(  (s_{\lambda})_{\lambda
},(r_{\lambda})_{\lambda}\right)  \in\mathbb{K}^{\Lambda}\times A$, that is,
$\left\vert s_{\lambda}\right\vert \leq\left\vert r_{\lambda}\right\vert $ for
all $\lambda$, it follows that $(s_{\lambda})_{\lambda}\in A\,$;

\noindent$\left(  3\right)  $~$I_{A}$ be a solid ideal of $A\,$;

\noindent$\left(  4\right)  $~$\mathcal{E}$ be a sheaf of $\mathbb{K}%
$-topological algebras over a topological space $X\,$.

Moreover, suppose that

\noindent$\left(  5\right)  $~for any open set $\Omega$ in $X$, the algebra
$\mathcal{E}(\Omega)$ is endowed with a family $\mathcal{P}(\Omega
)=(p_{i})_{i\in I(\Omega)}$ of semi-norms such that if $\Omega_{1}$,
$\Omega_{2}$ are two open subsets of $X$ with $\Omega_{1}\subset$ $\Omega_{2}%
$, it follows that $I(\Omega_{1})\subset I(\Omega_{2})$ and if $\rho_{1}^{2}$
is the restriction operator $\mathcal{E}(\Omega_{2})\rightarrow\mathcal{E}%
(\Omega_{1})$, then, for each $p_{i}\in\mathcal{P}(\Omega_{1})$ the semi-norm
$\widetilde{p}_{i}=p_{i}\circ\rho_{1}^{2}$ extends $p_{i}$ to $\mathcal{P}%
(\Omega_{2})\,$.

\noindent$\left(  6\right)  $~Let $\Theta=(\Omega_{h})_{h\in H}$ be any family
of open sets in $X$ with $\Omega=\cup_{h\in H}\Omega_{h}.$ Then, for each
$p_{i}\in\mathcal{P}(\Omega)$, $i\in I(\Omega)$, there exist a finite
subfamily of $\Theta$: $\Omega_{1},\ldots,\ \Omega_{n\left(  i\right)  }$ and
corresponding semi-norms $p_{1}\in\mathcal{P}(\Omega_{1}),\ldots,\ p_{n\left(
i\right)  }\in\mathcal{P}(\Omega_{n\left(  i\right)  })$, such that, for any
$u\in\mathcal{E}(\Omega)$%
\[
p_{i}\left(  u\right)  \leq p_{1}\left(  u\left\vert _{\Omega_{1}}\right.
\right)  +\ldots+p_{n\left(  i\right)  } ( u\, \vert_{\Omega_{n\left(
i\right)  }} ) .
\]

\noindent(b) Define $\left\vert B\right\vert =\left\{  \left(  \left\vert
r_{\lambda}\right\vert \right)  _{\lambda},\ \left(  r_{\lambda}\right)
_{\lambda}\in B\right\}  $, $B=A$ or $I_{A}$, and set
\begin{gather*}
\mathcal{H}_{(A,\mathcal{E},\mathcal{P})}(\Omega)=\left\{  (u_{\lambda
})_{\lambda}\in\left[  \mathcal{E}(\Omega)\right]  ^{\Lambda}\mid\forall i\in
I(\Omega),\left(  (p_{i}(u_{\lambda})\right)  _{\lambda}\in\left\vert
A\right\vert \right\} \\
\mathcal{J}_{(I_{A},\mathcal{E},\mathcal{P})}(\Omega)=\left\{  (u_{\lambda
})_{\lambda}\in\left[  \mathcal{E}(\Omega)\right]  ^{\Lambda}\mid\forall i\in
I(\Omega),\left(  p_{i}(u_{\lambda})\right)  _{\lambda}\in\left\vert
I_{A}\right\vert \right\} \\
\mathcal{C}=A/I_{A},
\end{gather*}

Note that, from (2), $\left\vert A\right\vert $ is a subset of $A$ and that
$A_{+}=\{(b_{\lambda})_{\lambda}\in A,\,\forall\lambda\in\Lambda,\ b_{\lambda
}\geq0\} =\left\vert A\right\vert $. The same holds for $I_{A}$. Furthermore,
(2) implies also that $A$ is a $\mathbb{K}$-algebra. Indeed, it suffices to
show that $A$ is stable under multiplication by elements of $\mathbb{K}$. Let
$c$ be in $\mathbb{K}$ and $(a_{\lambda})_{\lambda}\in A$. Then $(ca_{\lambda
})_{\lambda}$ satisfies $(\vert ca_{\lambda}\vert)_{\lambda}\leq(\vert
na_{\lambda}\vert)_{\lambda}$ for some $n\in\mathbb{N}$.\ We have
$(na_{\lambda})_{\lambda}\in A$ since $A$ is stable under addition. Thus,
using (2), we get that $(ca_{\lambda})_{\lambda}\in A$.

For later reference, we recall the following notions entering in the
definition of a sheaf ${\mathcal{A}}$ on $X$. Let $(\Omega_{h})_{h\in H}$ be a
family of open sets in $X$ with $\Omega= \cup_{h\in H}\Omega_{h}.$

\begin{itemize}
\item[$(F_{1})$] (Localization principle) Let $u, v \in{\mathcal{A}}(\Omega)$.
If all restrictions $u\vert_{\Omega_{h}}$ and $u\vert_{\Omega_{h}}$, $h\in H$,
coincide, then $u = v$ in ${\mathcal{A}}(\Omega)$.

\item[$(F_{2})$] (Gluing principle) Let $(u_{h})_{h\in H}$ be a coherent
family of elements of ${\mathcal{A}}(\Omega_{h})$, that is, the restrictions
to the non-void intersections of the $\Omega_{h}$ coincide. Then there is an
element $u\in{\mathcal{A}}(\Omega)$ such that $u|_{\Omega_{h}}=u_{h}$ for all
$h\in H$.
\end{itemize}

\begin{proposition}
\label{SPSPropSubAlg} \textit{(i)~}$\mathcal{H}_{(A,\mathcal{E},\mathcal{P})}%
$\textit{\ is a sheaf of }$\mathbb{K}$-\textit{subalgebras of the sheaf
}$\mathcal{E}^{\Lambda}$\textit{;\newline(ii)~}$\mathcal{J}_{(I_{A}%
,\mathcal{E},\mathcal{P})}$\textit{\ is a sheaf of ideals of }$\mathcal{H}%
_{(A,\mathcal{E},\mathcal{P})}$\textit{.}
\end{proposition}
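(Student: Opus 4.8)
The plan is to verify the sheaf axioms for $\mathcal{H}=\mathcal{H}_{(A,\mathcal{E},\mathcal{P})}$ and then check that $\mathcal{J}=\mathcal{J}_{(I_A,\mathcal{E},\mathcal{P})}$ sits inside it as a sheaf of ideals, leaning throughout on the hypotheses (1)--(6) together with the solidity of $A$ and $I_A$. First I would observe that $\mathcal{H}(\Omega)$ and $\mathcal{J}(\Omega)$ are $\mathbb{K}$-subalgebras, resp.\ ideals, of $\mathcal{E}(\Omega)^{\Lambda}$ for each fixed open $\Omega$: this is the purely ``algebraic'' part. Since each $p_i$ is a semi-norm, $p_i(u_\lambda+v_\lambda)\le p_i(u_\lambda)+p_i(v_\lambda)$ and $p_i(\mu u_\lambda)=|\mu|\,p_i(u_\lambda)$, so closure under addition and $\mathbb{K}$-scalar multiplication follows from the fact that $|A|=A_+$ is stable under addition and (as shown in the text) under multiplication by scalars; closure under the algebra product uses a sub-multiplicativity-type estimate on the $p_i$ (that $\mathcal{E}$ is a sheaf of topological algebras should be read as providing, for each $i$, a bound $p_i(u_\lambda v_\lambda)\le p_j(u_\lambda)p_k(v_\lambda)$ or $\le p_i(u_\lambda)p_i(v_\lambda)$ — I would state explicitly which continuity-of-multiplication convention is in force) together with the ring property of $A$. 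For $\mathcal{J}$ one uses the same estimates plus the fact that $I_A$ is a solid \emph{ideal} of $A$: if $(p_i(u_\lambda))_\lambda\in|I_A|$ and $(p_i(v_\lambda))_\lambda\in|A|$ then the product net is dominated by an element of $I_A$, hence lies in $|I_A|$ by solidity.

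Next I would handle the presheaf (restriction) structure. Given $\Omega_1\subset\Omega_2$ and $(u_\lambda)_\lambda\in\mathcal{H}(\Omega_2)$, I must check $(u_\lambda|_{\Omega_1})_\lambda\in\mathcal{H}(\Omega_1)$. Fix $i\in I(\Omega_1)$; by hypothesis (5), $I(\Omega_1)\subset I(\Omega_2)$ and $\widetilde p_i=p_i\circ\rho_1^2$ extends $p_i$, so $p_i(u_\lambda|_{\Omega_1})=\widetilde p_i(u_\lambda)$ is (indexed by the same $i\in I(\Omega_2)$) a net in $|A|$ because $(u_\lambda)_\lambda\in\mathcal{H}(\Omega_2)$. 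Hence the restriction lands in $\mathcal{H}(\Omega_1)$, and the restriction maps are manifestly algebra morphisms making $\mathcal{H}$ a presheaf of $\mathbb{K}$-algebras; the identical argument with $|I_A|$ in place of $|A|$ gives it for $\mathcal{J}$.

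Then come the two sheaf axioms $(F_1)$ and $(F_2)$ for a covering $\Omega=\bigcup_{h\in H}\Omega_h$. Both reduce to the corresponding properties of the ambient sheaf $\mathcal{E}^{\Lambda}$ (equivalently, $\mathcal{E}$ componentwise), so the only real content is showing membership is preserved. For $(F_1)$: if $(u_\lambda)_\lambda,(v_\lambda)_\lambda\in\mathcal{H}(\Omega)$ agree after restriction to every $\Omega_h$, then $u_\lambda=v_\lambda$ in $\mathcal{E}(\Omega)$ for each $\lambda$ by the localization principle in $\mathcal{E}$, so they are equal in $\mathcal{H}(\Omega)$ — nothing to check about nets here. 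The substantive step is $(F_2)$: given a coherent family $((u_\lambda^h)_\lambda)_{h\in H}$ with $(u_\lambda^h)_\lambda\in\mathcal{H}(\Omega_h)$, the gluing principle in $\mathcal{E}$ yields, for each $\lambda$, a unique $u_\lambda\in\mathcal{E}(\Omega)$ with $u_\lambda|_{\Omega_h}=u_\lambda^h$; I must show $(u_\lambda)_\lambda\in\mathcal{H}(\Omega)$. Here is exactly where hypothesis (6) is designed to be used: fix $i\in I(\Omega)$; (6) furnishes finitely many indices $h(1),\dots,h(n(i))\in H$ and semi-norms $p_k\in\mathcal{P}(\Omega_{h(k)})$ with $p_i(u_\lambda)\le\sum_{k=1}^{n(i)}p_k(u_\lambda|_{\Omega_{h(k)}})=\sum_{k=1}^{n(i)}p_k(u_\lambda^{h(k)})$. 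Each net $(p_k(u_\lambda^{h(k)}))_\lambda$ lies in $|A|=A_+$ because $(u_\lambda^{h(k)})_\lambda\in\mathcal{H}(\Omega_{h(k)})$, the finite sum lies in $A$ (a ring, hence closed under finite sums), and it is nonnegative, so the dominated net $(p_i(u_\lambda))_\lambda$ lies in $|A|$ by solidity of $A$. Thus $(u_\lambda)_\lambda\in\mathcal{H}(\Omega)$, proving $(F_2)$ for $\mathcal{H}$; verbatim the same computation with $I_A$ solid gives $(F_2)$ for $\mathcal{J}$. Finally I would remark that $\mathcal{J}(\Omega)$ is an ideal of $\mathcal{H}(\Omega)$ for every $\Omega$ (from the first paragraph) and that restrictions send $\mathcal{J}$-sections to $\mathcal{J}$-sections, so $\mathcal{J}$ is a subsheaf of ideals of $\mathcal{H}$, which is (ii).

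The main obstacle — really the only non-bookkeeping point — is the gluing step $(F_2)$: one must confirm that membership in $\mathcal{H}$ (a condition on the growth of \emph{all} semi-norms on the \emph{whole} of $\Omega$) is recovered from the local conditions, and this is possible \emph{only} because of the compatibility axiom (6), which bounds a global semi-norm by a finite sum of local ones; without it the global net $(p_i(u_\lambda))_\lambda$ could fail to lie in $A$ even though all local nets do. A secondary point requiring care is pinning down the precise sub-multiplicativity estimate for the $p_i$ that makes $\mathcal{H}(\Omega)$ closed under products and $\mathcal{J}(\Omega)$ absorbing; this is routine once the convention ``$\mathcal{E}$ is a sheaf of topological algebras with jointly continuous multiplication, expressed through $\mathcal{P}$'' is made explicit, but it should be stated rather than left implicit.
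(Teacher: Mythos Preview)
Your proposal is correct and follows essentially the same route as the paper's proof, which is itself only a sketch referring to \cite{JAM0,JAM1}: start from $\mathcal{E}^{\Lambda}$ as an ambient sheaf, use hypothesis~(5) to get the presheaf property and $(F_1)$, and use hypothesis~(6) to verify $(F_2)$. Your write-up simply fills in the details the paper omits, and your closing caveat about making the sub-multiplicativity estimate on the $p_i$ explicit is well taken---the paper leaves this implicit in the phrase ``sheaf of $\mathbb{K}$-topological algebras.''
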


\begin{proof}
The proof can be found in~\cite{JAM0, JAM1}, so we just recall the main steps.
We start from the statement that $\mathcal{E}$ and $\mathcal{E}^{\Lambda}$ are
already sheaves of algebras. From (5), we infer that $\mathcal{H}%
_{(A,\mathcal{E},\mathcal{P})}$ and $\mathcal{J}_{(I_{A},\mathcal{E}%
,\mathcal{P})}$ are a presheaves (the restriction property holds) and that the
localization property $(F_{1})$ is valid. To obtain the gluing property
$(F_{2})$ we need property (6), which generalizes the situation from
$C^{\infty}$ to $\mathcal{E}$.
\end{proof}

\begin{theorem}
The factor $\mathcal{H}_{(A,\mathcal{E},\mathcal{P})}/\mathcal{J}%
_{(I_{A},\mathcal{E},\mathcal{P})}$ is a presheaf satisfying the localization
principle $\left(  F_{1}\right)  $.
\end{theorem}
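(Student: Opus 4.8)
The plan is to show that the quotient presheaf $\mathcal{A} = \mathcal{H}_{(A,\mathcal{E},\mathcal{P})}/\mathcal{J}_{(I_{A},\mathcal{E},\mathcal{P})}$ is itself a presheaf and satisfies $(F_{1})$. First I would address the presheaf structure: since $\mathcal{H}_{(A,\mathcal{E},\mathcal{P})}$ is a presheaf of algebras and $\mathcal{J}_{(I_{A},\mathcal{E},\mathcal{P})}$ is a presheaf of ideals inside it (Proposition \ref{SPSPropSubAlg}), the restriction maps $\rho_{1}^{2}\colon\mathcal{E}(\Omega_{2})\to\mathcal{E}(\Omega_{1})$ carry $\mathcal{H}(\Omega_{2})$ into $\mathcal{H}(\Omega_{1})$ and $\mathcal{J}(\Omega_{2})$ into $\mathcal{J}(\Omega_{1})$; hence they pass to the quotient and define restriction operators on $\mathcal{A}$. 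The functoriality identities $\rho_{1}^{1}=\mathrm{id}$ and $\rho_{1}^{3}=\rho_{1}^{2}\circ\rho_{2}^{3}$ for the quotient maps follow immediately from the corresponding identities for $\mathcal{H}$, because the quotient projection is natural. So $\mathcal{A}$ is a presheaf of $\mathbb{K}$-algebras.

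Next I would verify the localization principle $(F_{1})$. Let $\Omega = \bigcup_{h\in H}\Omega_{h}$ and let $u,v\in\mathcal{A}(\Omega)$ with $u|_{\Omega_{h}} = v|_{\Omega_{h}}$ for all $h$. Pick representatives $(u_{\lambda})_{\lambda}, (v_{\lambda})_{\lambda}\in\mathcal{H}_{(A,\mathcal{E},\mathcal{P})}(\Omega)$. The hypothesis says that for each $h$ the net $(u_{\lambda}|_{\Omega_{h}} - v_{\lambda}|_{\Omega_{h}})_{\lambda}$ lies in $\mathcal{J}_{(I_{A},\mathcal{E},\mathcal{P})}(\Omega_{h})$, i.e. for every $p_{i}\in\mathcal{P}(\Omega_{h})$ the net $(p_{i}(u_{\lambda}|_{\Omega_{h}} - v_{\lambda}|_{\Omega_{h}}))_{\lambda}$ belongs to $|I_{A}|$. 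I want to conclude that $(u_{\lambda} - v_{\lambda})_{\lambda}\in\mathcal{J}_{(I_{A},\mathcal{E},\mathcal{P})}(\Omega)$. Fix $p_{i}\in\mathcal{P}(\Omega)$, $i\in I(\Omega)$. By property (6) there are finitely many $\Omega_{1},\ldots,\Omega_{n(i)}$ from the cover and semi-norms $p_{j}\in\mathcal{P}(\Omega_{j})$ with
\[
p_{i}(u_{\lambda}-v_{\lambda}) \leq \sum_{j=1}^{n(i)} p_{j}\big((u_{\lambda}-v_{\lambda})\big|_{\Omega_{j}}\big)
\]
for all $\lambda$. Each summand on the right is a net in $|I_{A}|$ by the hypothesis applied to the index of each $\Omega_{j}$; since $I_{A}$ is a solid ideal of $A$ it is in particular closed under finite sums, so the right-hand net lies in $|I_{A}|$, and then solidity of $I_{A}$ forces $(p_{i}(u_{\lambda}-v_{\lambda}))_{\lambda}\in|I_{A}|$ as well. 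As $i$ was arbitrary, $(u_{\lambda}-v_{\lambda})_{\lambda}\in\mathcal{J}_{(I_{A},\mathcal{E},\mathcal{P})}(\Omega)$, which says exactly $u = v$ in $\mathcal{A}(\Omega)$. One should also check that this is independent of the choice of representatives, which is automatic since changing representatives modifies $u_{\lambda}-v_{\lambda}$ by an element of $\mathcal{J}(\Omega)$.

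The main obstacle — and the only genuinely substantive point — is the passage from local control of the semi-norms to global control in the verification of $(F_{1})$: this is precisely where axiom (6) is indispensable, and it is the analogue of the partition-of-unity estimate that makes $C^{\infty}$ a sheaf. Everything else is bookkeeping: the compatibility of restriction with the quotient, the solidity of $I_{A}$ guaranteeing stability under finite sums and under domination, and representative-independence. I would also remark, as the theorem's phrasing implicitly signals, that the gluing principle $(F_{2})$ need not hold for the quotient — a coherent family of local classes need not patch to a global class, because lifting and gluing representatives via $(F_{2})$ for $\mathcal{H}$ may introduce, on overlaps, only a $\mathcal{J}$-difference rather than an exact agreement, and reconciling these differences globally can fail; hence the statement claims only $(F_{1})$.
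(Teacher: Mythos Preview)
Your proof is correct and follows essentially the same approach as the paper. The only difference is one of packaging: the paper dispatches the localization principle in a single line by invoking that $\mathcal{J}_{(I_{A},\mathcal{E},\mathcal{P})}$ is itself a sheaf (Proposition~\ref{SPSPropSubAlg}), so that a global element of $\mathcal{H}$ whose restrictions lie in $\mathcal{J}(\Omega_{h})$ must already lie in $\mathcal{J}(\Omega)$; you instead unpack this implication explicitly via axiom~(6) and the solidity of $I_{A}$, which is precisely the content of the sheaf property for $\mathcal{J}$ established earlier.
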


\begin{proof}
From the previous proposition, we know that $\mathcal{A}=\mathcal{H}%
_{(A,\mathcal{E},\mathcal{P})}/\mathcal{J}_{(I_{A},\mathcal{E},\mathcal{P})}$
is a presheaf. For $\Omega_{1}\subset$\ $\Omega_{2}$, the restriction is
defined by%
\[%
\begin{array}
[c]{ccc}%
\mathcal{A}\left(  \Omega_{2}\right)  & \overset{\mathcal{R}_{1}^{2}%
}{\longrightarrow} & \mathcal{A}\left(  \Omega_{1}\right) \\
u & \longmapsto & u\left\vert _{\Omega_{1}}\right.  :=\left[  u_{\lambda
}\left\vert _{\Omega_{1}}\right.  \right]
\end{array}
\]
where $\left(  u_{\lambda}\right)  _{\lambda}$ is any representative of
$u\in\mathcal{A}(\Omega_{2})$ and $\left[  u_{\lambda}\mid_{\Omega_{1}%
}\right]  $ denotes the class of $\left(  u_{\lambda}\mid_{\Omega_{1}}\right)
_{\lambda}$. The definition is consistent and independent of the
representative because for each $\left(  u_{\lambda}\right)  _{\lambda
\in\Lambda}\in\mathcal{H}_{(A,\mathcal{E},\mathcal{P})}(\Omega_{2})$ and
$\left(  \eta_{\lambda}\right)  _{\lambda\in\Lambda}\in\mathcal{J}%
_{(I_{A},\mathcal{E},\mathcal{P})}(\Omega_{2})$, we have
\[
\left(  u_{\lambda}\right)  _{\lambda}\left\vert _{\Omega_{1}}\right.
:=\left(  u_{\lambda}\left\vert _{\Omega_{1}}\right.  \right)  _{\lambda}%
\in\mathcal{H}_{(A,\mathcal{E},\mathcal{P})}(\Omega_{1})\ ,\ \ \ \left(
\eta_{\lambda}\right)  _{\lambda}\left\vert _{\Omega_{1}}\right.  :=\left(
\eta_{\lambda}\left\vert _{\Omega_{1}}\right.  \right)  _{\lambda}%
\in\mathcal{J}_{(I_{A},\mathcal{E},\mathcal{P})}(\Omega_{1})
\]
The localization principle is also obviously fulfilled because $\mathcal{J}%
_{(I_{A},\mathcal{E},\mathcal{P})}$ is itself a sheaf.
\end{proof}

\begin{proposition}
\label{SPSPropConst}\textit{Under the hypothesis (2), the constant sheaf
}$\mathcal{H}_{(A,\mathbb{K},\left\vert .\right\vert )}/\mathcal{J}%
_{(I_{A},\mathbb{K},\left\vert .\right\vert )}$\textit{\ is exactly the ring
}$\mathcal{C}=A/I_{A}.$
\end{proposition}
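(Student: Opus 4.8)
The plan is to compute the presheaf $\mathcal{A}=\mathcal{H}_{(A,\mathbb{K},\left\vert .\right\vert )}/\mathcal{J}_{(I_{A},\mathbb{K},\left\vert .\right\vert )}$ explicitly and then recognize it as the constant sheaf with value $\mathcal{C}=A/I_{A}$. Recall that here $\mathcal{E}=\mathbb{K}$ stands for the constant sheaf, so that for an open set $\Omega\subset X$ the algebra $\mathcal{E}(\Omega)$ is the algebra of locally constant $\mathbb{K}$-valued functions on $\Omega$ -- in particular $\mathcal{E}(\Omega)=\mathbb{K}$ when $\Omega$ is connected -- and the seminorm family $\mathcal{P}(\Omega)$ is generated by the single seminorm $\left\vert .\right\vert $ (the modulus). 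First I would treat a connected $\Omega$, where the statement reduces to a purely algebraic computation on $\mathbb{K}^{\Lambda}$.

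So let $\Omega$ be connected. A net $(u_{\lambda})_{\lambda}\in\mathbb{K}^{\Lambda}$ belongs to $\mathcal{H}_{(A,\mathbb{K},\left\vert .\right\vert )}(\Omega)$ exactly when $(\left\vert u_{\lambda}\right\vert )_{\lambda}\in\left\vert A\right\vert $. By the remark following the definition of $\mathcal{H}$ and $\mathcal{J}$ we have $\left\vert A\right\vert =A_{+}\subseteq A$, and hypothesis (2) (solidity of $A$) supplies the equivalence $(\left\vert u_{\lambda}\right\vert )_{\lambda}\in A\iff(u_{\lambda})_{\lambda}\in A$: from $(\left\vert u_{\lambda}\right\vert )_{\lambda}\leq(\left\vert u_{\lambda}\right\vert )_{\lambda}$ one gets $(u_{\lambda})_{\lambda}\in A$ whenever $(\left\vert u_{\lambda}\right\vert )_{\lambda}\in A$, and conversely $(u_{\lambda})_{\lambda}\in A$ forces $(\left\vert u_{\lambda}\right\vert )_{\lambda}\in A_{+}=\left\vert A\right\vert $. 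Hence $\mathcal{H}_{(A,\mathbb{K},\left\vert .\right\vert )}(\Omega)=A$, and the same argument applied to the solid ideal $I_{A}$ (hypothesis (3)) gives $\mathcal{J}_{(I_{A},\mathbb{K},\left\vert .\right\vert )}(\Omega)=I_{A}$ as a subideal of $A$. Therefore $\mathcal{A}(\Omega)=A/I_{A}=\mathcal{C}$, and for an inclusion $\Omega_{1}\subset\Omega_{2}$ of connected open sets the restriction map of the preceding theorem is the identity of $\mathcal{C}$, since restricting a constant net of constants changes nothing.

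To finish, one passes to an arbitrary open $\Omega$ by writing $\Omega=\bigsqcup_{j}\Omega_{j}$ as the disjoint union of its connected components: using that a locally constant function is the same as a family of constants indexed by the components, together with properties (5)--(6) on the seminorm system, membership of $(u_{\lambda})_{\lambda}$ in $\mathcal{H}_{(A,\mathbb{K},\left\vert .\right\vert )}(\Omega)$ (resp.\ $\mathcal{J}_{(I_{A},\mathbb{K},\left\vert .\right\vert )}(\Omega)$) is detected component by component, whence $\mathcal{A}(\Omega)\cong\prod_{j}\mathcal{C}$ with the obvious restriction maps. This is precisely the constant sheaf attached to the ring $\mathcal{C}$; in particular the gluing property $(F_{2})$ holds, so $\mathcal{A}$ really is a sheaf, which is the extra content of the proposition compared to the previous theorem. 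I expect the only delicate point to be this last step -- verifying that for a disconnected $\Omega$ the conditions defining $\mathcal{H}$ and $\mathcal{J}$ split over the components, which is exactly where hypotheses (5) and (6) enter -- while the solidity computation and the identification of the quotient with $\mathcal{C}$ are routine.
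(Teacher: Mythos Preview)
Your proposal is correct and follows essentially the same approach as the paper: identify $\mathcal{H}_{(A,\mathbb{K},\left\vert .\right\vert )}$ with $A$ and $\mathcal{J}_{(I_{A},\mathbb{K},\left\vert .\right\vert )}$ with $I_{A}$ via the solidity hypotheses, whence the quotient is $\mathcal{C}$. The paper's proof is a one-liner (``We clearly have $\mathcal{H}_{(A,\mathbb{K},\left\vert .\right\vert )}=A$ and $\mathcal{J}_{(I_{A},\mathbb{K},\left\vert .\right\vert )}=I_{A}$''), so your explicit unpacking of the solidity argument and your treatment of disconnected $\Omega$ and the gluing property go well beyond what the authors actually wrote down.
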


\begin{proof}
We clearly have $\mathcal{H}_{(A,\mathbb{K},\left\vert .\right\vert )}=A$ and
$\mathcal{J}_{(I_{A},\mathbb{K},\left\vert .\right\vert )}=I_{A}.$
\end{proof}

\begin{definition}
\label{SPSdefCEP} \textit{\textit{The factor presheaf of algebras }}over the
ring $\mathcal{C}=A/I_{A}$:%
\[
\mathcal{A}=\mathcal{H}_{(A,\mathcal{E},\mathcal{P})}/\mathcal{J}%
_{(I_{A},\mathcal{E},\mathcal{P})}%
\]
is called a presheaf of $(\mathcal{C},\mathcal{E},\mathcal{P})$%
\textit{\textit{-algebras.}}
\end{definition}

\begin{notation}
\textit{We denote by }$\left[  u_{\lambda}\right]  $\textit{\ the class in
}$\mathcal{A}(\Omega)$ \textit{defined by }$\left(  u_{\lambda}\right)
_{\lambda\in\Lambda}\in\mathcal{H}_{(A,\mathcal{E},\mathcal{P})}(\Omega)$. For
$u\in\mathcal{A}$, the notation $\left(  u_{\lambda}\right)  _{\lambda
\in\Lambda}\in u$ means that $\left(  u_{\lambda}\right)  _{\lambda\in\Lambda
}$ is a representative of $u$.
\end{notation}

\begin{remark}
The problem of rendering $\mathcal{A}$ a sheaf (and even a fine sheaf) is not
studied here. It is well known that the Colombeau algebra $\mathcal{G}$, which
is a special case of a $(\mathcal{C},\mathcal{E},\mathcal{P})$%
\textit{\textit{-algebra}} (see Subsection \ref{SPSSubSecDC}), forms a fine
sheaf \cite{AraBia,GKOS}. The sheaf property can be inferred from the
existence of a \textrm{C}$^{\infty}$-partition of unity associated to any open
covering of an open set $\Omega$ of $\mathbb{R}^{d}$. This existence is
fulfilled because $X=\mathbb{R}^{d}$ is a locally compact Hausdorff space. On
the other hand, \textrm{C}$^{\infty}$ is a fine sheaf because multiplication
by a smooth function defines a sheaf homomorphism in a natural way. Hence the
usual topology and \textrm{C}$^{\infty}$-partition of unity defines the
required sheaf partition of unity. Observing that $\mathcal{G}$ is a sheaf of
\textrm{C}$^{\infty}$-modules and using the well known result that a sheaf of
modules on a fine sheaf is itself a fine sheaf, we obtain the corresponding
assertion about $\mathcal{G}$. In the general case, turning $\mathcal{A}$ into
a sheaf requires additional hypotheses, which are not necessary for the
results in this paper. Indeed, the presheaf structure of $\mathcal{A}$ and the
$\left(  F_{1}\right)  $-principle are sufficient to develop our local and
microlocal asymptotic analysis.
\end{remark}

\begin{remark}
\label{SPSRemMorphism}The map $\iota:\mathbb{K}\rightarrow A$ defined by
$\iota\left(  r\right)  =\left(  r\right)  _{\lambda}$ is an embedding of
algebras and induces a ring morphism from $\mathbb{K}\rightarrow\mathcal{C}$
if, and only if, $A$ is unitary (Lemma 14, \cite{JAM1}). Indeed, if $A$ is
unitary, $\left(  r\right)  _{\lambda}=r\left(  1_{\lambda}\right)  _{\lambda
}$ is an element of $A$ since $A$ is a $\mathbb{K}$-algebra, and $\iota$ is
clearly an injective ring morphism. The converse is obvious. Moreover, if
$\Lambda$ is a directed set with partial order relation $\prec$ and if
\begin{equation}
I_{A}\subset\left\{  \left(  a_{\lambda}\right)  _{\lambda}\in A\,\vert
\,\lim_{\Lambda}a_{\lambda}=0\right\}  , \label{SPSCondAnneau}%
\end{equation}
then the morphism $\iota$ is injective. Indeed, if $\left[  \iota\left(
r\right)  \right]  =0$, relation (\ref{SPSCondAnneau}) implies that the limit
of the constant sequence $\left(  r\right)  _{\lambda}$ is null, thus $r=0$.
\end{remark}

\subsection{Relationship with distribution theory and Colombeau
algebras\label{SPSSubSecDC}}

One main feature of this construction is that we can choose the triple
$(\mathcal{C},\mathcal{E},\mathcal{P})$ such that the sheaves $\mathrm{C}%
^{\infty}$ and $\mathcal{D}^{\prime}$ are embedded in the corresponding sheaf
$\mathcal{A}$. In particular, we can multiply (the images of) distributions in
$\mathcal{A}$.

We consider the sheaf $\mathcal{E}=\mathrm{C}^{\infty}$ over $\mathbb{R}^{d}$,
where $\mathcal{P}$ is the usual family of topologies $\left(  \mathcal{P}%
_{\Omega}\right)  _{\Omega\in\mathcal{O}\left(  \mathbb{R}^{d}\right)  }$.
Here $\mathcal{O}\left(  \mathbb{R}^{d}\right)  $ denotes the set of all open
sets of $\mathbb{R}^{d}$; this notation will be used in the sequel. Let us
recall that $\mathcal{P}_{\Omega}$ is defined by the family of semi-norms
$\left(  p_{K,l}\right)  _{K\Subset\Omega,l\in\mathbb{N}}$ with
\[
\forall f\in\mathrm{C}^{\infty}\left(  \Omega\right)  ,\ \ \ p_{K,l}\left(
f\right)  =\sup_{x\in K,\left\vert \alpha\right\vert \leq l}\left\vert
\partial^{\alpha}f\left(  x\right)  \right\vert .
\]
From Lemma 14 in \cite{JAM1}, it follows that the canonical maps, defined for
any $\Omega\in\mathcal{O}\left(  \mathbb{R}^{d}\right)  $ by
\[
\sigma_{\Omega}:\mathrm{C}^{\infty}\left(  \Omega\right)  \rightarrow
\mathcal{H}_{(A,\mathcal{E},\mathcal{P})}(\Omega)\ \ \ \ \ f\mapsto\left(
f\right)  _{\lambda}\text{,}%
\]
are injective morphism\ of algebras if, and only if, $A$ is unitary. Under
this assumption, these maps give rise to a canonical sheaf embedding of
$\mathrm{C}^{\infty}$ into $\mathcal{H}_{(A,\mathcal{E},\mathcal{P})}$ and
(using a partition of unity in $\mathrm{C}^{\infty}$ inducing a sheaf
structure on $\mathcal{A}$) to a canonical sheaf morphism of algebras from
$\mathrm{C}^{\infty}$ into $\mathcal{A}$. This sheaf morphism turns out to be
a sheaf morphism of embeddings if $\Lambda$ is a directed set with respect to
a partial order $\prec$ and if relation (\ref{SPSCondAnneau}) holds.\smallskip

We shall address the question of the embedding of $\mathcal{D}^{\prime}$ for
the simple case of $\Lambda=\left(  0,1\right]  $. For a net $\left(
\varphi_{\varepsilon}\right)  _{\varepsilon}$ of mollifiers given by
\[
\varphi_{\varepsilon}\left(  x\right)  =\dfrac{1}{\varepsilon^{d}}%
\varphi\left(  \dfrac{x}{\varepsilon}\right)  ,\ x\in\mathbb{R}^{d}\text{
\ where }\varphi\in\mathcal{D}( \mathbb{R}^{d}) \text{ and\ }%
{\textstyle\int}
\varphi\left(  x\right)  dx=1,
\]
and $T\in\mathcal{D}^{\prime}\left(  \mathbb{R}^{d}\right)  $, the net
$\left(  T\ast\varphi_{\varepsilon}\right)  _{\varepsilon}$ is a net of smooth
functions in \textrm{C}$^{\infty}\left(  \mathbb{R}^{d}\right)  $, moderately
increasing in $\dfrac{1}{\varepsilon}$.\ This means that
\begin{equation}
\forall K\Subset\mathbb{R}^{d},\forall l\in N,\ \ \exists m\in\mathbb{N}%
\text{\ :\ \ }p_{K,l}\mathbb{\ }\left(  T\ast\varphi_{\varepsilon}\right)
=\mathrm{o}(\varepsilon^{-m}),\ \text{as }\varepsilon\rightarrow0.
\label{SPSPolGro1}%
\end{equation}
This justifies to choose
\begin{align*}
A  &  =\left\{  \left(  r_{\varepsilon}\right)  _{\varepsilon}\in
\mathbb{R}^{\left(  0,1\right]  }\;\mid\exists m\in\mathbb{N\ }:\mathbb{\ }%
\left\vert u_{\varepsilon}\right\vert =\mathrm{o}(\varepsilon^{-m}),\ \text{as
}\varepsilon\rightarrow0\right\} \\
I  &  =\left\{  \left(  r_{\varepsilon}\right)  _{\varepsilon}\in
\mathbb{R}^{\left(  0,1\right]  }\mid\forall q\in\mathbb{N\ }:\mathbb{\ }%
\left\vert u_{\varepsilon}\right\vert =\mathrm{o}(\varepsilon^{q}),\ \text{as
}\varepsilon\rightarrow0\right\}  .
\end{align*}
In this case (with $\mathcal{E}=\mathrm{C}^{\infty}$), the sheaf of algebras
$\mathcal{A}=\mathcal{H}_{(A,\mathcal{E},\mathcal{P})}/\mathcal{J}%
_{(I_{A},\mathcal{E},\mathcal{P})}$ is exactly the so-called special Colombeau
algebra $\mathcal{G}$ \cite{JFC3,GKOS,Ober1}. Then, for all $\Omega
\in\mathcal{O}\left(  \mathbb{R}^{d}\right)  $, $\mathrm{C}^{\infty}\left(
\Omega\right)  $ is embedded in $\mathcal{A}\left(  \Omega\right)  $ by
\[
\sigma_{\Omega}:\mathrm{C}^{\infty}(\Omega)\rightarrow\mathcal{A}%
(\Omega)\ \ \ \ \ f\mapsto\left[  f_{\varepsilon}\right]  \text{ with
}f_{\varepsilon}=f\text{ for all }\varepsilon\text{ in }\left(  0,1\right]  ,
\]
because the constant net $\left(  f\right)  _{\varepsilon}$ belongs to
$\mathcal{H}_{(A,\mathcal{E},\mathcal{P})}\left(  \mathbb{R}^{d}\right)  $ and
$\left(  f\right)  _{\varepsilon}\in\mathcal{J}_{(I_{A},\mathcal{E}%
,\mathcal{P})}$ implies $f=0$ in $\mathrm{C}^{\infty}(\Omega)$. Furthermore,
$\mathcal{D}^{\prime}\left(  \mathbb{R}^{d}\right)  $ is embedded in
$\mathcal{A}\left(  \mathbb{R}^{d}\right)  $ by the mapping
\[
\iota:T\mapsto\left(  T\ast\varphi_{\varepsilon}\right)  _{\varepsilon}%
\]
Indeed, relation (\ref{SPSPolGro1}) implies that $\left(  T\ast\varphi
_{\varepsilon}\right)  _{\varepsilon}$ belongs to $\mathcal{H}_{(A,\mathcal{E}%
,\mathcal{P})}\left(  \mathbb{R}^{d}\right)  $ and $\left(  T\ast
\varphi_{\varepsilon}\right)  _{\varepsilon}\in\mathcal{J}_{(I_{A}%
,\mathcal{E},\mathcal{P})}$ implies that $T\ast\varphi_{\varepsilon
}\rightarrow0$ in $\mathcal{D}^{\prime}\left(  \mathbb{R}^{d}\right)  $, as
$\varepsilon\rightarrow0$ and $T=0$. Thus,\ $\iota$ is a well defined
injective map.\smallskip

With the help of cutoff functions, we can define analogously, for each open
set $\Omega$ in $\mathbb{R}^{d}$, an embedding $\iota_{\Omega}$ of
$\mathcal{D}^{\prime}\left(  \Omega\right)  $ into $\mathcal{A}\left(
\Omega\right)  $, and finally a sheaf embedding $\mathcal{D}^{\prime
}\rightarrow\mathcal{A}$. This embedding depends on the choice of the net of
mollifiers $\left(  \varphi_{\varepsilon}\right)  _{\varepsilon}$. We refer
the reader to \cite{ADEmbed,NePiSc} for more complete discussions about
embeddings in Colombeau's case and to \cite{JAM1} for the case of
$(\mathcal{C},\mathcal{E},\mathcal{P})$-algebras.

\subsection{An association process\label{SPSSubsecAssoc}}

We return to the general case with the assumption that $A$ is unitary and
$\Lambda$ is a directed set with partial order relation $\prec.\smallskip$

Let us denote by:\vspace{-0.04in}

\begin{itemize}
\item $\Omega$ an open subset of $X$,\vspace{-0.04in}

\item $\mathcal{F}$ a given sheaf (or presheaf) of topological $\mathbb{K}%
$-vector spaces (resp. $\mathbb{K}$-algebras) over $X$ containing
$\mathcal{E}$ as a subsheaf of topological algebras,\vspace{-0.04in}

\item $a$ a map from $\mathbb{R}_{+}$ to $A_{+}$ such that $a(0)=1$ (for
$r\in\mathbb{R}_{+}$, we denote $a\left(  r\right)  $ by $\left(  a_{\lambda
}\left(  r\right)  \right)  _{\lambda}$).\vspace{-0.04in}
\end{itemize}

In the Colombeau case, a typical example would be $a_{\varepsilon}(r) =
\varepsilon^{r}$, $\varepsilon\in(0,1]$.

For $\left(  v_{\lambda}\right)  _{\lambda}\in\mathcal{H}_{(A,\mathcal{E}%
,\mathcal{P})}\left(  \Omega\right)  $, we shall denote the limit of $\left(
v_{\lambda}\right)  _{\lambda}$ for the $\mathcal{F}$\textit{-}topology by
$\lim\limits_{\Lambda}\left.  _{\mathcal{F}(\Omega)}\right.  v_{\lambda}$ when
it exists. We recall that $\lim\limits_{\Lambda}\left.  _{\mathcal{F}%
(V)}\right.  u_{\lambda}\left\vert _{V}\right.  =f\in\mathcal{F}(V)$ iff, for
each $\mathcal{F}$\textit{-}neighborhood $W$\ of $f$, there exists
$\lambda_{0}\in\Lambda$\ such that%
\[
\lambda\prec\lambda_{0}\ \Longrightarrow\ u_{\lambda}\vert_{V}\in W.
\]
We suppose also that we have, for each open subset $V\subset\Omega$,%
\begin{equation}
\mathcal{J}_{(I_{A},\mathcal{E},\mathcal{P})}(V)\subset\left\{  \left(
v_{\lambda}\right)  _{\lambda}\in\mathcal{H}_{(A,\mathcal{E},\mathcal{P}%
)}(V):\lim\limits_{\Lambda}\left.  _{\mathcal{F}(V)}\right.  v_{\lambda
}=0\right\}  . \label{SPSHypoth1}%
\end{equation}

\begin{definition}
\label{SPSdefAssoc}Consider $u=\left[  u_{\lambda}\right]  \in\mathcal{A}%
(\Omega)$, $r\in\mathbb{R}_{+}$, $V$ an open subset of $\Omega$ and
$f\in\mathcal{F}(V)$.\ We say that $u$ is $a\left(  r\right)  $%
\emph{-associated with }$f$\emph{ in }$V$:%
\[
u\overset{a(r)}{\underset{\mathcal{F}\left(  V\right)  }{\sim}}f
\]
if $\lim\limits_{\Lambda}\left.  _{\mathcal{F}(V)}\right.  \left(  a_{\lambda
}\left(  r\right)  \,u_{\lambda}\left\vert _{V}\right.  \right)
=f.$\smallskip\newline In particular, if $r=0$, $u$ and $f$ are called
\emph{associated }in $V$.
\end{definition}

To ensure the independence of the definition with respect to the
representative of $u$, we must have, for any $(\eta_{\lambda})_{\lambda}%
\in\mathcal{J}_{(I_{A},\mathcal{E},\mathcal{P})}(\Omega)$, that $\lim
\limits_{\Lambda}\left.  _{\mathcal{F}(V)}\right.  a_{\lambda}\left(
r\right)  \,\eta_{\lambda}\left\vert _{V}\right.  =0$. As $\mathcal{J}%
_{(I_{A},\mathcal{E},\mathcal{P})}(V)$ is a module over $A$, $(a_{\lambda
}\left(  r\right)  \,\eta_{\lambda}\left\vert _{V}\right.  )_{\lambda}$ is in
$\mathcal{J}_{(I_{A},\mathcal{E},\mathcal{P})}(V)$.\ Thus, our claim follows
from hypothesis (\ref{SPSHypoth1}).

\begin{example}
\label{SPSExample1}Take $X=\mathbb{R}^{d}$, $\mathcal{F}=\mathcal{D}^{\prime}%
$, $\Lambda=]0,1]$, $\mathcal{A=G}$, $V=\Omega$, $r=0$.\ The usual association
between $u=\left[  u_{\varepsilon}\right]  \in\mathcal{G}\left(
\Omega\right)  $ and $T\in\mathcal{D}^{\prime}\left(  \Omega\right)  $ is
defined by%
\[
u\sim T\Longleftrightarrow u\overset{a(0)}{\underset{\mathcal{D}^{\prime
}\left(  \Omega\right)  }{\sim}}T\Longleftrightarrow\lim_{\varepsilon
\rightarrow0}\left.  _{\mathcal{D}^{\prime}\left(  \Omega\right)  }\right.
u_{\varepsilon}=T.
\]

\end{example}

\subsection{The $\mathcal{F}$-singular support of a generalized
function\label{SPSSsecFsingsupp}}

We use the notations of Subsection \ref{SPSSubsecAssoc}. According to the
hypothesis (\ref{SPSHypoth1}), we have, for any open set $\Omega$ in $X$,%
\[
\mathcal{J}_{(I_{A},\mathcal{E},\mathcal{P})}(\Omega)\subset
\left\{  \left(  u_{\lambda}\right)  _{\lambda}\in\mathcal{H}_{(A,\mathcal{E}%
,\mathcal{P})}(\Omega)\ :\ \lim\limits_{\Lambda}\left.  _{\mathcal{F}%
(V)}\right.  u_{\lambda}=0\right\}  .
\]
Set%
\[
\mathcal{F}_{\mathcal{A}}(\Omega)=\left\{  u\in\mathcal{A}(\Omega
)\,\mid\,\exists\left(  u_{\lambda}\right)  _{\lambda}\in u,\ \exists
f\in\mathcal{F}(\Omega)\ :\ \lim\limits_{\Lambda}\left.  _{\mathcal{F}%
(V)}\right.  u_{\lambda}=f\right\}  .
\]
$\mathcal{F}_{\mathcal{A}}(\Omega)$ is well defined because if $\left(
\eta_{\lambda}\right)  _{\lambda}$ belongs to $\mathcal{J}_{(I_{A}%
,\mathcal{E},\mathcal{P})}(\Omega)$, we have $\lim\limits_{\Lambda}\left.
_{\mathcal{F}(V)}\right.  \eta_{\lambda}=0$. \smallskip

Moreover, $\mathcal{F}_{\mathcal{A}}$ is a sub-presheaf of vector spaces
(resp. algebras) of $\mathcal{A}$. Roughly speaking, it is the presheaf whose
sections above some open set $\Omega$ are the generalized functions of
$\mathcal{A}\left(  \Omega\right)  $ associated with an element of
$\mathcal{F}\left(  \Omega\right)  $.

Thus, for $u\in\mathcal{A}\left(  \Omega\right)  $, we can consider the set
$\mathcal{O}_{\mathcal{A}}^{\mathcal{F}}\left(  u\right)  $ of all $x\in
\Omega$ having an open neighborhood $V$ on which $u$ is associated with
$f\in\mathcal{F}\left(  V\right)  $, that is:%
\[
\mathcal{O}_{\mathcal{A}}^{\mathcal{F}}\left(  u\right)  =\left\{  x\in
\Omega\ \left\vert \ \exists V\in\mathcal{V}_{x}:u\left\vert _{V}\right.
\in\mathcal{F}_{\mathcal{A}}(V)\right.  \right\}  ,
\]
$\mathcal{V}_{x}$ being the set of all the open neighborhoods of $x$.
\smallskip

This leads to the following definition:

\begin{definition}
\label{SPSdefSingSupp}The $\mathcal{F}$-singular support of $u\in
\mathcal{A}(\Omega)$\ is denoted $\mathcal{S}_{\mathcal{A}}^{\mathcal{F}%
}\left(  u\right)  $ and defined as%
\[
\mathcal{S}_{\mathcal{A}}^{\mathcal{F}}\left(  u\right)  =\Omega
\backslash\mathcal{O}_{\mathcal{A}}^{\mathcal{F}}\left(  u\right)  .
\]

\end{definition}

\begin{remark}
\label{SPSRemSuppjamad}\ $\left(  i\right)  $~The validity of the gluing
principle $(F_{2})$ is not necessary to get the notion of support (and of
$\mathcal{F}$-singular support) of a section $u\in\mathcal{A}(\Omega)$. More
precisely, the localization principle $(F_{1})$ is sufficient to prove the
following: The set%
\[
\mathcal{O}_{\mathcal{A}}^{\{0\}}\left(  u\right)  =\left\{  x\in
\Omega\ \left\vert \ \exists V\in\mathcal{V}_{x},\ u\left\vert _{V}\right.
=0\right.  \right\}
\]
is exactly the the union $\Omega_{\mathcal{A}}\left(  u\right)  $ of the open
subsets of $\Omega$ on which $u$ vanishes.\newline Indeed, $(F_{1})$ allows to
show that $u$ vanishes on an open subset $\mathcal{O}$ of $\Omega$ if, and
only if, it vanishes on an open neighborhood of every point of $\mathcal{O}$.
This leads immediately to the required assertion.\newline Moreover,
$\Omega_{\mathcal{A}}\left(  u\right)  =\mathcal{O}_{\mathcal{A}}%
^{\{0\}}\left(  u\right)  $ is the largest open set on which $u$ vanishes,
$\mathcal{S}_{\mathcal{A}}^{\{0\}}\left(  u\right)  =\Omega\setminus
\mathcal{O}_{\mathcal{A}}^{\{0\}}\left(  u\right)  $ is exactly the support of
$u$ in its classical definition, and the $\mathcal{F}$-singular support of $u$
is a closed subset of its support.\smallskip\newline$\left(  ii\right)  $~In
contrast to the situation described above for the support, we need the gluing
principle $(F_{2})$ if we want to prove that the restriction of $u$ to
$\mathcal{O}_{\mathcal{A}}^{\mathcal{F}}\left(  u\right)  $ belongs to
$\mathcal{F}_{\mathcal{A}}(\mathcal{O}_{\mathcal{A}}^{\mathcal{F}}\left(
u\right)  )$. We make this precise in the following lemma.
\end{remark}

\begin{lemma}
Take $u\in\mathcal{A}(\Omega)$ and set $\Omega_{\mathcal{A}}^{\mathcal{F}%
}\left(  u\right)  =\cup_{i\in I}\Omega_{i},\left(  \Omega_{i}\right)  _{i\in
I}$ denoting the collection of the open subsets of $\Omega$ such that
$u\left\vert _{\Omega_{i}}\right.  \in\mathcal{F}_{\mathcal{A}}\left(
\Omega_{i}\right)  $. Then, if $\mathcal{F}_{\mathcal{A}}$ is a sheaf (even if
$\mathcal{A}$ is only a prehesaf),\newline$\left(  i\right)  $~$\Omega
_{\mathcal{A}}^{\mathcal{F}}\left(  u\right)  $ is the largest open subset
$\mathcal{O}$ of $\Omega$ such that $u\left\vert _{\mathcal{O}}\right.  $
belongs to $\mathcal{F}_{\mathcal{A}}\left(  \mathcal{O}\right)  $%
;$\newline\left(  ii\right)  ~\Omega_{\mathcal{A}}^{\mathcal{F}}\left(
u\right)  =\mathcal{O}_{\mathcal{A}}^{\mathcal{F}}(u)$ and $\mathcal{S}%
_{\mathcal{A}}^{\mathcal{F}}\left(  u\right)  =\Omega\setminus\Omega
_{\mathcal{A}}^{\mathcal{F}}\left(  u\right)  $.
\end{lemma}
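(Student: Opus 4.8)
The plan is to exploit the sheaf property of $\mathcal{F}_{\mathcal{A}}$ together with the localization principle $(F_{1})$ valid in $\mathcal{A}$. For part $(i)$, I would first observe that $\Omega_{\mathcal{A}}^{\mathcal{F}}\left(u\right)$ is open, being a union of open sets, and that it is a candidate for the set on which $u$ restricts to a section of $\mathcal{F}_{\mathcal{A}}$. On each $\Omega_{i}$ we have $u\vert_{\Omega_{i}}\in\mathcal{F}_{\mathcal{A}}\left(\Omega_{i}\right)$ by definition of the collection. The family $\left(u\vert_{\Omega_{i}}\right)_{i\in I}$ is coherent: on $\Omega_{i}\cap\Omega_{j}$ both $u\vert_{\Omega_{i}}$ and $u\vert_{\Omega_{j}}$ restrict (in $\mathcal{A}$, using transitivity of the restriction maps $\mathcal{R}$) to $u\vert_{\Omega_{i}\cap\Omega_{j}}$, so they agree there. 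Since $\mathcal{F}_{\mathcal{A}}$ is assumed to be a sheaf, the gluing principle $(F_{2})$ applied in $\mathcal{F}_{\mathcal{A}}$ produces an element $w\in\mathcal{F}_{\mathcal{A}}\left(\Omega_{\mathcal{A}}^{\mathcal{F}}\left(u\right)\right)$ with $w\vert_{\Omega_{i}}=u\vert_{\Omega_{i}}$ for all $i$. Now $w$ and $u\vert_{\Omega_{\mathcal{A}}^{\mathcal{F}}\left(u\right)}$ are two elements of $\mathcal{A}\left(\Omega_{\mathcal{A}}^{\mathcal{F}}\left(u\right)\right)$ whose restrictions to each $\Omega_{i}$ coincide; by $(F_{1})$ in $\mathcal{A}$ they are equal, so $u\vert_{\Omega_{\mathcal{A}}^{\mathcal{F}}\left(u\right)}\in\mathcal{F}_{\mathcal{A}}\left(\Omega_{\mathcal{A}}^{\mathcal{F}}\left(u\right)\right)$. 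Maximality is then immediate: if $\mathcal{O}$ is any open subset of $\Omega$ with $u\vert_{\mathcal{O}}\in\mathcal{F}_{\mathcal{A}}\left(\mathcal{O}\right)$, then $\mathcal{O}$ is one of the $\Omega_{i}$, hence $\mathcal{O}\subset\Omega_{\mathcal{A}}^{\mathcal{F}}\left(u\right)$.

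For part $(ii)$, the inclusion $\mathcal{O}_{\mathcal{A}}^{\mathcal{F}}(u)\subset\Omega_{\mathcal{A}}^{\mathcal{F}}\left(u\right)$ follows because if $x\in\mathcal{O}_{\mathcal{A}}^{\mathcal{F}}(u)$ there is an open $V\in\mathcal{V}_{x}$ with $u\vert_{V}\in\mathcal{F}_{\mathcal{A}}(V)$, so $V$ is among the $\Omega_{i}$ and thus $x\in V\subset\Omega_{\mathcal{A}}^{\mathcal{F}}\left(u\right)$. Conversely, if $x\in\Omega_{\mathcal{A}}^{\mathcal{F}}\left(u\right)$ then $x\in\Omega_{i}$ for some $i$, and $\Omega_{i}$ itself is an open neighborhood of $x$ on which $u$ is associated with a section of $\mathcal{F}$, so $x\in\mathcal{O}_{\mathcal{A}}^{\mathcal{F}}(u)$. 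Hence the two sets coincide, and the identity $\mathcal{S}_{\mathcal{A}}^{\mathcal{F}}\left(u\right)=\Omega\setminus\Omega_{\mathcal{A}}^{\mathcal{F}}\left(u\right)$ is just Definition \ref{SPSdefSingSupp} rewritten.

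I expect the only genuine subtlety to be the coherence verification and the careful bookkeeping of restriction maps: one must check that restricting $u$ first to $\Omega_{i}$ and then to $\Omega_{i}\cap\Omega_{j}$ gives the same element of $\mathcal{A}\left(\Omega_{i}\cap\Omega_{j}\right)$ as going directly, which is the transitivity $\mathcal{R}_{\Omega_{i}\cap\Omega_{j}}^{\Omega_{i}}\circ\mathcal{R}_{\Omega_{i}}^{\Omega}=\mathcal{R}_{\Omega_{i}\cap\Omega_{j}}^{\Omega}$ that holds for any presheaf. Everything else is a formal consequence of the sheaf axioms for $\mathcal{F}_{\mathcal{A}}$ and the single axiom $(F_{1})$ for $\mathcal{A}$; no properties of the limit process or of $A$, $I_{A}$ enter at this stage. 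The hypothesis that $\mathcal{F}_{\mathcal{A}}$ is a sheaf — equivalently, as noted in Remark \ref{SPSRemSuppjamad}$(ii)$, the availability of gluing — is exactly what makes the global conclusion $u\vert_{\mathcal{O}_{\mathcal{A}}^{\mathcal{F}}\left(u\right)}\in\mathcal{F}_{\mathcal{A}}\left(\mathcal{O}_{\mathcal{A}}^{\mathcal{F}}\left(u\right)\right)$ possible, whereas for the plain support in Remark \ref{SPSRemSuppjamad}$(i)$ the constant sheaf $\{0\}$ trivially has this property so only $(F_{1})$ was needed.
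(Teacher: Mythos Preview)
Your proposal is correct and follows essentially the same approach as the paper for part $(i)$: glue the coherent family $(u|_{\Omega_i})_{i\in I}$ via $(F_2)$ in $\mathcal{F}_{\mathcal{A}}$, then identify the result with $u|_{\Omega_{\mathcal{A}}^{\mathcal{F}}(u)}$ via $(F_1)$ in $\mathcal{A}$, and conclude maximality from the definition of the family $(\Omega_i)$.

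For part $(ii)$ your argument is actually more economical than the paper's. The paper establishes $\mathcal{O}_{\mathcal{A}}^{\mathcal{F}}(u)\subset\Omega_{\mathcal{A}}^{\mathcal{F}}(u)$ by repeating the gluing construction over the cover $(V_x)_{x\in\mathcal{O}_{\mathcal{A}}^{\mathcal{F}}(u)}$ to show $u|_{\mathcal{O}_{\mathcal{A}}^{\mathcal{F}}(u)}\in\mathcal{F}_{\mathcal{A}}(\mathcal{O}_{\mathcal{A}}^{\mathcal{F}}(u))$, whence $\mathcal{O}_{\mathcal{A}}^{\mathcal{F}}(u)$ is itself one of the $\Omega_i$. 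You instead observe directly that each witnessing neighborhood $V$ is already among the $\Omega_i$, so the inclusion is purely set-theoretic and needs no sheaf axiom. Both directions of your equality $\Omega_{\mathcal{A}}^{\mathcal{F}}(u)=\mathcal{O}_{\mathcal{A}}^{\mathcal{F}}(u)$ thus hold by definition alone; the sheaf hypothesis on $\mathcal{F}_{\mathcal{A}}$ is consumed entirely in part $(i)$. This is a cleaner organization of the same ideas.
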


\begin{proof}
$\left(  i\right)  $ For $i\in I$, set $u\left\vert _{\Omega_{i}}\right.
=f_{i}\in\mathcal{F}_{\mathcal{A}}\left(  \Omega_{i}\right)  $.\ The family
$\left(  f_{i}\right)  _{i\in I}$ is coherent by assumption: From $(F_{2})$,
there exists $f\in\mathcal{F}_{\mathcal{A}}(\Omega_{\mathcal{A}}^{\mathcal{F}%
}\left(  u\right)  )$ such that $f\left\vert _{\Omega_{i}}\right.  =f_{i}$.
But from $(F_{1})$, we have $f=u$ on $\cup_{i\in I}\Omega_{i}=\Omega
_{\mathcal{A}}^{\mathcal{F}}\left(  u\right)  $. Thus $u\,|_{\Omega
_{\mathcal{A}}^{\mathcal{F}}\left(  u\right)  }\in\mathcal{F}_{\mathcal{A}%
}(\Omega_{\mathcal{A}}^{\mathcal{F}}\left(  u\right)  )$, and $\Omega
_{\mathcal{A}}^{\mathcal{F}}\left(  u\right)  $ is clearly the largest open
subset of $\Omega$ having this property.

$\left(  ii\right)  $ First, $\mathcal{O}_{\mathcal{A}}^{\mathcal{F}}\left(
u\right)  $ is clearly an open subset of $\Omega$. For $x\in\mathcal{O}%
_{\mathcal{A}}^{\mathcal{F}}\left(  u\right)  $, set $u\left\vert _{V_{x}%
}\right.  =f_{x}\in\mathcal{F}_{\mathcal{A}}\left(  V_{x}\right)  $ for some
suitable neighborhood $V_{x}$.\ The open set $\mathcal{O}_{\mathcal{A}%
}^{\mathcal{F}}\left(  u\right)  $ can be covered by the family $\left(
V_{x}\right)  _{x\in\mathcal{O}_{\mathcal{A}}^{\mathcal{F}}\left(  u\right)
}$.\ As the family $\left(  f_{x}\right)  $ is coherent, we get from $(F_{2})$
that there exists $f\in\mathcal{F}_{\mathcal{A}}\left(  \cup_{x\in
\mathcal{O}_{\mathcal{A}}^{\mathcal{F}}\left(  u\right)  }V_{x}\right)  $ such
that $f\left\vert _{V_{x}}\right.  =f_{x}$. From $(F_{1})$, we have $u=f$ on
$\cup_{x\in\mathcal{O}_{\mathcal{A}}^{\mathcal{F}}\left(  u\right)  }V_{x}$
and, therefore, $u\,|_{\mathcal{O}_{\mathcal{A}}^{\mathcal{F}}\left(
u\right)  }\in\mathcal{F}_{\mathcal{A}}(\mathcal{O}_{\mathcal{A}}%
^{\mathcal{F}}\left(  u\right)  ).$ Thus $\mathcal{O}_{\mathcal{A}%
}^{\mathcal{F}}\left(  u\right)  $ is contained in $\Omega_{\mathcal{A}%
}^{\mathcal{F}}\left(  u\right)  $. Conversely, if $x\in\Omega_{\mathcal{A}%
}^{\mathcal{F}}\left(  u\right)  $, there exists an open neighborhood $V_{x}$
of $x$ such that $u\left\vert _{V_{x}}\right.  \in\mathcal{F}_{\mathcal{A}%
}\left(  V_{x}\right)  $. Thus $x\in\mathcal{O}_{\mathcal{A}}^{\mathcal{F}%
}\left(  u\right)  $ and the assertion $\left(  ii\right)  $ holds.
\end{proof}

\begin{proposition}
\label{SPStheoSingSupp}For any $u,v\in\mathcal{A}(\Omega)$, if $\mathcal{F}$
is a presheaf of topological vector spaces, (resp. algebras), we have:%
\[
\mathcal{S}_{\mathcal{A}}^{\mathcal{F}}(u+v)\subset\mathcal{S}_{\mathcal{A}%
}^{\mathcal{F}}(u)\cup\mathcal{S}_{\mathcal{A}}^{F}(v).
\]
Moreover, in the resp. case, we have%
\[
\mathcal{S}_{\mathcal{A}}^{\mathcal{F}}(uv)\subset\mathcal{S}_{\mathcal{A}%
}^{F}(u)\cup\mathcal{S}_{\mathcal{A}}^{F}(v).
\]

\end{proposition}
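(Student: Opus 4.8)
The plan is to reduce the statement to a pointwise assertion about the open sets $\mathcal{O}_{\mathcal{A}}^{\mathcal{F}}$ and then exploit the presheaf (restriction) structure together with the fact that $\mathcal{F}_{\mathcal{A}}$ is a sub-presheaf of vector spaces (resp.\ algebras) of $\mathcal{A}$. Concretely, since $\mathcal{S}_{\mathcal{A}}^{\mathcal{F}}(w)=\Omega\setminus\mathcal{O}_{\mathcal{A}}^{\mathcal{F}}(w)$, the inclusion $\mathcal{S}_{\mathcal{A}}^{\mathcal{F}}(u+v)\subset\mathcal{S}_{\mathcal{A}}^{\mathcal{F}}(u)\cup\mathcal{S}_{\mathcal{A}}^{\mathcal{F}}(v)$ is equivalent to the reverse inclusion on the complements, namely $\mathcal{O}_{\mathcal{A}}^{\mathcal{F}}(u)\cap\mathcal{O}_{\mathcal{A}}^{\mathcal{F}}(v)\subset\mathcal{O}_{\mathcal{A}}^{\mathcal{F}}(u+v)$, and similarly for the product. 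So first I would fix a point $x\in\mathcal{O}_{\mathcal{A}}^{\mathcal{F}}(u)\cap\mathcal{O}_{\mathcal{A}}^{\mathcal{F}}(v)$.

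By definition of $\mathcal{O}_{\mathcal{A}}^{\mathcal{F}}$ there are open neighborhoods $V_{1},V_{2}\in\mathcal{V}_{x}$ with $u|_{V_{1}}\in\mathcal{F}_{\mathcal{A}}(V_{1})$ and $v|_{V_{2}}\in\mathcal{F}_{\mathcal{A}}(V_{2})$. Set $V=V_{1}\cap V_{2}$, an open neighborhood of $x$. The key step is then to use that $\mathcal{F}_{\mathcal{A}}$ is a sub-presheaf: the restriction maps send $\mathcal{F}_{\mathcal{A}}(V_{1})$ into $\mathcal{F}_{\mathcal{A}}(V)$ and $\mathcal{F}_{\mathcal{A}}(V_{2})$ into $\mathcal{F}_{\mathcal{A}}(V)$, so $u|_{V}$ and $v|_{V}$ both lie in $\mathcal{F}_{\mathcal{A}}(V)$. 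Since $\mathcal{F}_{\mathcal{A}}(V)$ is a $\mathbb{K}$-vector subspace of $\mathcal{A}(V)$, the sum $(u+v)|_{V}=u|_{V}+v|_{V}$ belongs to $\mathcal{F}_{\mathcal{A}}(V)$, hence $x\in\mathcal{O}_{\mathcal{A}}^{\mathcal{F}}(u+v)$. In the case where $\mathcal{F}$ is a presheaf of algebras, $\mathcal{F}_{\mathcal{A}}(V)$ is moreover a subalgebra, so $(uv)|_{V}=(u|_{V})(v|_{V})\in\mathcal{F}_{\mathcal{A}}(V)$, giving $x\in\mathcal{O}_{\mathcal{A}}^{\mathcal{F}}(uv)$. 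Taking complements yields the two claimed inclusions.

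The only point requiring a little care—and the place where I expect the main (mild) obstacle—is the compatibility of the algebraic operations with restriction, i.e.\ that $(u+v)|_{V}=u|_{V}+v|_{V}$ and $(uv)|_{V}=(u|_{V})(v|_{V})$ in $\mathcal{A}(V)$. This is where one invokes that $\mathcal{A}=\mathcal{H}_{(A,\mathcal{E},\mathcal{P})}/\mathcal{J}_{(I_{A},\mathcal{E},\mathcal{P})}$ is a presheaf of algebras with the restriction defined representative-wise (as recalled after Definition~\ref{SPSdefCEP}): choosing representatives $(u_{\lambda})_{\lambda}\in u$ and $(v_{\lambda})_{\lambda}\in v$, one has $(u_{\lambda}+v_{\lambda})|_{V}=u_{\lambda}|_{V}+v_{\lambda}|_{V}$ and $(u_{\lambda}v_{\lambda})|_{V}=u_{\lambda}|_{V}\,v_{\lambda}|_{V}$ already at the level of $\mathcal{E}(V)$, since $\rho$ is an algebra homomorphism; passing to classes gives the identities in $\mathcal{A}(V)$. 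Everything else is a formal manipulation of neighborhoods and complements, so the proof is short.

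\begin{proof}
It suffices to prove the dual inclusions for the complementary open sets $\mathcal{O}_{\mathcal{A}}^{\mathcal{F}}$. Let $x\in\mathcal{O}_{\mathcal{A}}^{\mathcal{F}}(u)\cap\mathcal{O}_{\mathcal{A}}^{\mathcal{F}}(v)$. There exist $V_{1},V_{2}\in\mathcal{V}_{x}$ with $u|_{V_{1}}\in\mathcal{F}_{\mathcal{A}}(V_{1})$ and $v|_{V_{2}}\in\mathcal{F}_{\mathcal{A}}(V_{2})$. Put $V=V_{1}\cap V_{2}\in\mathcal{V}_{x}$. Since $\mathcal{F}_{\mathcal{A}}$ is a sub-presheaf of vector spaces (resp.\ algebras) of $\mathcal{A}$, the restriction operators of $\mathcal{A}$ map $\mathcal{F}_{\mathcal{A}}(V_{1})$ and $\mathcal{F}_{\mathcal{A}}(V_{2})$ into $\mathcal{F}_{\mathcal{A}}(V)$; hence $u|_{V}\in\mathcal{F}_{\mathcal{A}}(V)$ and $v|_{V}\in\mathcal{F}_{\mathcal{A}}(V)$. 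As $\mathcal{A}$ is a presheaf of algebras and its restrictions are defined representative-wise, $(u+v)|_{V}=u|_{V}+v|_{V}$ in $\mathcal{A}(V)$, and $\mathcal{F}_{\mathcal{A}}(V)$ being a $\mathbb{K}$-vector subspace of $\mathcal{A}(V)$, we get $(u+v)|_{V}\in\mathcal{F}_{\mathcal{A}}(V)$, so $x\in\mathcal{O}_{\mathcal{A}}^{\mathcal{F}}(u+v)$. This shows
\[
\mathcal{O}_{\mathcal{A}}^{\mathcal{F}}(u)\cap\mathcal{O}_{\mathcal{A}}^{\mathcal{F}}(v)\subset\mathcal{O}_{\mathcal{A}}^{\mathcal{F}}(u+v),
\]
and, passing to complements in $\Omega$,
\[
\mathcal{S}_{\mathcal{A}}^{\mathcal{F}}(u+v)\subset\mathcal{S}_{\mathcal{A}}^{\mathcal{F}}(u)\cup\mathcal{S}_{\mathcal{A}}^{\mathcal{F}}(v).
\]
If $\mathcal{F}$ is a presheaf of algebras, then $\mathcal{F}_{\mathcal{A}}(V)$ is a subalgebra of $\mathcal{A}(V)$ and $(uv)|_{V}=(u|_{V})(v|_{V})\in\mathcal{F}_{\mathcal{A}}(V)$, whence $x\in\mathcal{O}_{\mathcal{A}}^{\mathcal{F}}(uv)$; taking complements yields
\[
\mathcal{S}_{\mathcal{A}}^{\mathcal{F}}(uv)\subset\mathcal{S}_{\mathcal{A}}^{\mathcal{F}}(u)\cup\mathcal{S}_{\mathcal{A}}^{\mathcal{F}}(v).
\]
\end{proof}
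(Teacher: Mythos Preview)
Your proof is correct and follows essentially the same approach as the paper's: pick a point in $\mathcal{O}_{\mathcal{A}}^{\mathcal{F}}(u)\cap\mathcal{O}_{\mathcal{A}}^{\mathcal{F}}(v)$, intersect the two neighborhoods, use that $\mathcal{F}_{\mathcal{A}}$ is a sub-presheaf of vector spaces (resp.\ algebras) to conclude $(u+v)|_{V}\in\mathcal{F}_{\mathcal{A}}(V)$ (resp.\ $(uv)|_{V}\in\mathcal{F}_{\mathcal{A}}(V)$), and pass to complements. Your version is slightly more explicit about why restriction preserves $\mathcal{F}_{\mathcal{A}}$ and why the operations commute with restriction, but the argument is the same.
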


\begin{proof}
If $x\in\Omega$ belongs to $\mathcal{O}_{\mathcal{A}}^{\mathcal{F}}%
(u)\cap\mathcal{O}_{\mathcal{A}}^{\mathcal{F}}(v)$, there exist $V$ and $W$ in
$\mathcal{V}_{x}$ such that $u\left\vert _{V}\right.  \in\mathcal{F}%
_{\mathcal{A}}(V)$ and $v\left\vert _{W}\right.  \in\mathcal{F}_{\mathcal{A}%
}(W)$. Thus $(u+v)_{\mid V\cap W}\in\mathcal{F}_{\mathcal{A}}(V\cap W)$ (resp.
$\left(  uv\right)  _{\mid V\cap W}\in\mathcal{F}_{\mathcal{A}}(V\cap W)$),
which implies%
\[
\mathcal{O}_{\mathcal{A}}^{\mathcal{F}}(u)\cap\mathcal{O}_{\mathcal{A}%
}^{\mathcal{F}}(v)\subset\mathcal{O}_{\mathcal{A}}^{\mathcal{F}}%
(u+v)\ \ \ \ \ \text{(resp.\ }\mathcal{O}_{\mathcal{A}}^{\mathcal{F}}%
(u)\cap\mathcal{O}_{\mathcal{A}}^{\mathcal{F}}(v)\subset\mathcal{O}%
_{\mathcal{A}}^{\mathcal{F}}\left(  uv\right)  \text{).}%
\]
The result follows by taking the complementary sets in $\Omega$.\medskip
\end{proof}

This proposition leads easily to the following:

\begin{corollary}
\label{SPStCorSingSupp}Let $\left(  u_{j}\right)  _{1\leq j\leq p}$ be any
finite family of elements in $\mathcal{A}(\Omega)$. If $\mathcal{F}$ is a
presheaf of topological vector spaces, (resp. algebras), we have%
\[
\mathcal{S}_{\mathcal{A}}^{\mathcal{F}}(%
{\textstyle\sum\limits_{1\leq j\leq p}}
u_{j})\subset%
{\textstyle\bigcup\limits_{1\leq j\leq p}}
\mathcal{S}_{\mathcal{A}}^{F}(u_{j}).
\]
Moreover, in the resp. case, we have%
\[
\mathcal{S}_{\mathcal{A}}^{\mathcal{F}}(%
{\textstyle\prod\limits_{1\leq j\leq p}}
u_{j})\subset%
{\textstyle\bigcup\limits_{1\leq j\leq p}}
\mathcal{S}_{\mathcal{A}}^{F}(u_{j}).
\]
In particular, if $u_{j}=u$ for $1\leq j\leq p$, we have $\mathcal{S}%
_{\mathcal{A}}^{\mathcal{F}}(u^{p})\subset\mathcal{S}_{\mathcal{A}}^{F}(u).$
\end{corollary}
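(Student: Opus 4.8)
The plan is to deduce Corollary \ref{SPStCorSingSupp} directly from Proposition \ref{SPStheoSingSupp} by a straightforward induction on $p$, using associativity of addition (resp.\ multiplication) in the presheaf $\mathcal{A}$, and then to obtain the statement about powers as the special case $u_j=u$ for all $j$.

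First I would dispose of the base case $p=1$, which is trivial since $\mathcal{S}_{\mathcal{A}}^{\mathcal{F}}(u_1)\subset\mathcal{S}_{\mathcal{A}}^{\mathcal{F}}(u_1)$, and record the case $p=2$, which is exactly Proposition \ref{SPStheoSingSupp}. For the inductive step, assume the inclusion holds for some $p\geq 2$ and consider a family $(u_j)_{1\leq j\leq p+1}$. Writing $v=\sum_{1\leq j\leq p}u_j$ (resp.\ $v=\prod_{1\leq j\leq p}u_j$), which is a well-defined element of $\mathcal{A}(\Omega)$ since $\mathcal{A}$ is a presheaf of algebras, I would apply Proposition \ref{SPStheoSingSupp} to the pair $v$ and $u_{p+1}$ to get $\mathcal{S}_{\mathcal{A}}^{\mathcal{F}}(v+u_{p+1})\subset\mathcal{S}_{\mathcal{A}}^{\mathcal{F}}(v)\cup\mathcal{S}_{\mathcal{A}}^{\mathcal{F}}(u_{p+1})$ (resp.\ the analogous inclusion for $vu_{p+1}$). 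Then the induction hypothesis gives $\mathcal{S}_{\mathcal{A}}^{\mathcal{F}}(v)\subset\bigcup_{1\leq j\leq p}\mathcal{S}_{\mathcal{A}}^{\mathcal{F}}(u_j)$, and combining the two inclusions yields $\mathcal{S}_{\mathcal{A}}^{\mathcal{F}}(\sum_{1\leq j\leq p+1}u_j)\subset\bigcup_{1\leq j\leq p+1}\mathcal{S}_{\mathcal{A}}^{\mathcal{F}}(u_j)$, as required. The multiplicative case is identical, invoking only the "resp." part of Proposition \ref{SPStheoSingSupp}.

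For the final assertion, setting $u_j=u$ for all $j$ in the product inequality gives $\mathcal{S}_{\mathcal{A}}^{\mathcal{F}}(u^p)\subset\bigcup_{1\leq j\leq p}\mathcal{S}_{\mathcal{A}}^{\mathcal{F}}(u)=\mathcal{S}_{\mathcal{A}}^{\mathcal{F}}(u)$, since the union of finitely many copies of the same set is that set.

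There is essentially no obstacle here: the content is entirely in Proposition \ref{SPStheoSingSupp}, and the only thing to be slightly careful about is that the iterated sum and product are genuinely defined in the presheaf $\mathcal{A}(\Omega)$ and that the restriction-to-an-open-subset operation commutes with these finite algebraic operations (so that, e.g., $(\prod_j u_j)|_{V}=\prod_j (u_j|_{V})$), which is part of what it means for $\mathcal{A}$ to be a presheaf of algebras and was already used implicitly in the proof of Proposition \ref{SPStheoSingSupp}. Hence the proof is a one-line induction, and I would present it as such.
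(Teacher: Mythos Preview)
Your proof is correct and matches the paper's approach: the paper does not give a separate proof but simply states that the corollary follows easily from Proposition~\ref{SPStheoSingSupp}, which is precisely the induction you carry out.
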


\begin{example}
Taking \label{SPSExampleA}$\mathcal{E}=\mathrm{C}^{\infty}$; $\mathcal{F}%
=\mathcal{D}^{\prime}$; $\mathcal{A}=\mathcal{G}$ leads to the ${\mathcal{D}%
}^{\prime}$-singular support of an element of the Colombeau algebra. This
notion is complementary to the usual concept of local association in the
Colombeau sense. We refer the reader to \cite{JAM0, JAM1} for more details.
\end{example}

\begin{example}
\label{SSPExam1} In the following examples we consider $X=\mathbb{R}^{d}$,
$\mathcal{E}=\mathrm{C}^{\infty}$ and $\mathcal{A=G}$. \smallskip
\newline$\left(  i\right)  $~Take $u\in\sigma_{\Omega}\left(  \mathrm{C}%
^{\infty}\left(  \Omega\right)  \right)  $, where $\sigma_{\Omega}%
:\mathrm{C}^{\infty}\left(  \Omega\right)  \rightarrow\mathcal{G}\left(
\Omega\right)  $ is the canonical embedding defined in Subsection
\ref{SPSSubSecDC}. Then $\mathcal{S}_{\mathcal{G}}^{\mathrm{C}^{p}%
}(u)=\varnothing$, for all $p\in\overline{\mathbb{N}}$.\smallskip
\newline$\left(  ii\right)  $~Take $\varphi\in\mathcal{D}\left(
\mathbb{R}\right)  $, with $\int\varphi\left(  x\right)  \,\mathrm{d}x=1$, and
set $\varphi_{\varepsilon}\left(  x\right)  =\varepsilon^{-1}\varphi\left(
x/\varepsilon\right)  $. As $\varphi_{\varepsilon}\underset{\mathcal{D}%
^{\prime}(\mathbb{R})}{\overset{\varepsilon\rightarrow0}{\longrightarrow}%
}\delta$, we have: $\mathcal{S}_{\mathcal{G}}^{\mathcal{D}^{\prime}}\left(
\left[  \varphi_{\varepsilon}\right]  \right)  =\left\{  0\right\}  $. We note
also that $\mathcal{S}_{\mathcal{G}}^{\mathrm{C}^{p}}\left(  \left[
\varphi_{\varepsilon}\right]  \right)  =\left\{  0\right\}  $. Indeed, for any
$K\Subset\mathbb{R}^{\ast}=\mathbb{R}\backslash\left\{  0\right\}  $ and
$\varepsilon$ small enough, $\varphi_{\varepsilon}$ is null on $K$ and,
therefore, $\varphi_{\varepsilon}\underset{\mathrm{C}^{\infty}(\mathbb{R}%
^{\ast})}{\overset{\varepsilon\rightarrow0}{\longrightarrow}}=0$%
.\smallskip\newline$\left(  iii\right)  $~Take $u=\left[  u_{\varepsilon
}\right]  $ with $u_{\varepsilon}(x)=\varepsilon\sin(x/\varepsilon)$. We have
that $\lim p_{K,0}(u_{\varepsilon})=0$, for all $K\Subset\mathbb{R}$, whereas
$\lim p_{K,1}(u_{\varepsilon})$ does not exist for $l\geq1$.\ Therefore
\[
\mathcal{S}_{\mathcal{G}}^{\mathrm{C}^{0}}(u)=\mathbb{\varnothing
\,},\text{\ \ \ }\mathcal{S}_{\mathcal{G}}^{\mathrm{C}^{1}}(u)=\mathbb{R}%
\text{.}%
\]

\end{example}

\begin{remark}
\label{SPSSingpq} For any $\left(  p,q\right)  \in\overline{\mathbb{N}}^{2}$
with $p\leq q$, and $u\in\mathcal{G}$, it holds that $\mathcal{S}%
_{\mathcal{G}}^{\mathrm{C}^{p}}\left(  u\right)  \subset\mathcal{S}%
_{\mathcal{G}}^{\mathrm{C}^{q}}(u)$.
\end{remark}

\section{The concept of $(a,\mathcal{F})$\textit{-}microlocal
analysis\label{SPSMicAn}}

Let $\Omega$ be an open set in $X$. Fix $u=\left[  u_{\lambda}\right]
\in\mathcal{A}(\Omega)$ and $x\in\Omega$. The idea of the $(a,\mathcal{F)}%
$\textit{-}microlocal analysis is the following: $\left(  u_{\lambda}\right)
_{\lambda}$ may not tend to a section of $\mathcal{F}$ above a neighborhood of
$x$, that is, there exists no $V\in\mathcal{V}_{x}$ and no $f\in
\mathcal{F}\left(  V\right)  $\ such that $\lim\limits_{\Lambda}\left.
_{\mathcal{F}(V)}\right.  u_{\lambda}=f$.\ Nevertheless, in this case, there
may exist $V\in\mathcal{V}_{x}$, $r\geq0$ and $f\in\mathcal{F}\left(
V\right)  $ such that $\lim\limits_{\Lambda}\left.  _{\mathcal{F}(V)}\right.
a_{\lambda}(r)u_{\lambda}=f$, that is $\left[  a_{\lambda}(r)u_{\lambda
}\left\vert _{V}\right.  \right]  $ belongs to the subspace (resp. subalgebra)
$\mathcal{F}_{\mathcal{A}}(V)$ of $\mathcal{A}(V)$ introduced in Subsection
\ref{SPSSsecFsingsupp}.
These preliminary remarks lead to the following concept.

\subsection{The $(a,\mathcal{F})$\textit{-}singular parametric
spectrum\label{SPSSubSecSPS}}

We recall that $a$ is a map from $\mathbb{R}_{+}$ to $A_{+}$ such that $a(0) =
1$ and $\mathcal{F}$ is a presheaf of topological vector spaces (or
topological algebras). For any open subset $\Omega$ of $X$, $u=\left[
u_{\lambda}\right]  \in\mathcal{A}(\Omega)$ and $x\in\Omega,$ set
\begin{align*}
N_{\left(  a,\mathcal{F}\right)  ,x}\left(  u\right)   &  =\left\{
r\in\mathbb{R}_{+}\ \mid\ \exists V\in\mathcal{V}_{x},\ \exists f\in
\mathcal{F}(V)\ :\ \lim\limits_{\Lambda}\left.  _{\mathcal{F}(V)}\right.
(a_{\lambda}(r)\,u_{\lambda}\left\vert _{V}\right.  )=f\right\} \\
&  =\Big\{ r\in\mathbb{R}_{+}\ \mid\ \exists V\in\mathcal{V}_{x}\ :\ \left[
a_{\lambda}\left(  r\right)  u_{\lambda}\left\vert _{V}\right.  \right]
\in\mathcal{F}_{\mathcal{A}}(V)\Big\} .
\end{align*}
It is easy to check that $N_{\left(  a,\mathcal{F}\right)  ,x}\left(
u\right)  $ does not depend on the representative of $u$. If no confusion may
arise, we shall simply write
\[
N_{\left(  a,\mathcal{F}\right)  ,x}\left(  u\right)  =N_{x}(u).
\]

\begin{theorem}
\label{SPSThmNx}Suppose that:\newline$\left(  a\right)  $ For all $\lambda
\in\Lambda$
\[
\forall\left(  r,s\right)  \in\mathbb{R}_{+},\ \ a_{\lambda}(r+s)\leq
a_{\lambda}(r)a_{\lambda}(s),
\]
and, for all $r\in\mathbb{R}_{+}\backslash\left\{  0\right\}  $, the net
$\left(  a_{\lambda}\left(  r\right)  \right)  _{\lambda}$ converges to $0$ in
$\mathbb{K}$.\newline$\left(  b\right)  $ $\mathcal{F}$ is a presheaf of
separated locally convex topological vector spaces.\smallskip\newline Then we
have, for $u\in\mathcal{A}(\Omega)$:\newline$\left(  i\right)  $~If $r\in
N_{x}(u)$, then $\left[  r,+\infty\right)  $ is included in $N_{x}(u)$.
Moreover, for all $s>r$, there exists $V\in\mathcal{V}_{x}$ such that:
$\lim\limits_{\Lambda}\left.  _{\mathcal{F}(V)}\right.  (a_{\lambda
}(s)\,u_{\lambda}\left\vert _{V}\right.  )=0$. Consequently, $N_{x}(u)$ is
either empty, or a sub-interval of $\mathbb{R}_{+}$.\newline$\left(
ii\right)  $~More precisely, suppose that for $x\in\Omega$, there exist
$r\in\mathbb{R}_{+}$, $V\in\mathcal{V}_{x}$ and$\ f\in\mathcal{F}(V)$, nonzero
on each neighborhood of $x$ included in $V$, such that $\lim\limits_{\Lambda
}\left.  _{\mathcal{F}(V)}\right.  (a_{\lambda}(r)\,u_{\lambda}\left\vert
_{V}\right.  )=f$. Then $N_{x}(u)=\left[  r,+\infty\right)  .\newline\left(
iii\right)  $~In the situation of $\left(  i\right)  $ and $\left(  ii\right)
$, we have that $0\in N_{x}(u)$ iff $N_{x}(u)=\mathbb{R}_{+}$.\ Moreover, if
one of these assertions holds, the limits $\lim\limits_{\Lambda}\left.
_{\mathcal{F}(V)}\right.  (a_{\lambda}\left(  s\right)  \,u_{\lambda
}\left\vert _{V}\right.  )$ can be non null only for $s=0$.
\end{theorem}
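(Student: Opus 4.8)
The plan is to reduce all three parts to a single comparison identity for the scaled nets, combined with the elementary fact that in a topological vector space a \emph{convergent} net multiplied by a scalar net tending to $0$ tends to $0$ (write $c_{\lambda}x_{\lambda}=c_{\lambda}(x_{\lambda}-x)+c_{\lambda}x$ and use a balanced neighbourhood base); the separatedness in hypothesis $(b)$ will be used, each time, to make $\mathcal{F}$-limits unique. For the comparison identity, fix $r\geq 0$ and $t>0$: by $(a)$ one has $a_{\lambda}(r+t)\leq a_{\lambda}(r)\,a_{\lambda}(t)$ for every $\lambda$, so putting $c_{\lambda}=a_{\lambda}(r+t)/a_{\lambda}(r)$ when $a_{\lambda}(r)>0$ and $c_{\lambda}=0$ otherwise, one gets the identity $a_{\lambda}(r+t)\,u_{\lambda}=c_{\lambda}\bigl(a_{\lambda}(r)\,u_{\lambda}\bigr)$ in $\mathcal{E}(\Omega)\subset\mathcal{F}(\Omega)$ for \emph{every} $\lambda$, with $0\leq c_{\lambda}\leq a_{\lambda}(t)$; since $a_{\lambda}(t)\to 0$ in $\mathbb{K}$ by $(a)$, squeezing gives $c_{\lambda}\to 0$. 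The only point here that is not automatic is the set of indices with $a_{\lambda}(r)=0$: on them submultiplicativity forces $a_{\lambda}(r+t)=0$ as well, so the identity survives with the convention $c_{\lambda}=0$.

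For part $(i)$ I would take $r\in N_{x}(u)$, say $\lim\limits_{\Lambda}\left.  _{\mathcal{F}(V)}\right.(a_{\lambda}(r)\,u_{\lambda}|_{V})=f$ for some $V\in\mathcal{V}_{x}$ and $f\in\mathcal{F}(V)$, and, for $s>r$, apply the identity with $t=s-r>0$: restricting it to $V$ and letting $\lambda$ run, the topological-vector-space fact gives $\lim\limits_{\Lambda}\left.  _{\mathcal{F}(V)}\right.(a_{\lambda}(s)\,u_{\lambda}|_{V})=0$. By the independence of $a(s)$-association of the representative (the remark following Definition~\ref{SPSdefAssoc}) this simultaneously shows $s\in N_{x}(u)$ and that the limit there is $0$, and the same $V$ works for every $s>r$. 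Hence $[r,+\infty)\subset N_{x}(u)$; and since this applies to each $r\in N_{x}(u)$, the set $N_{x}(u)$ is either empty or a half-line $[r_{0},+\infty)$ or $(r_{0},+\infty)$ with $r_{0}=\inf N_{x}(u)$, in particular a sub-interval of $\mathbb{R}_{+}$.

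For part $(ii)$, part $(i)$ already gives $[r,+\infty)\subset N_{x}(u)$, so it remains to exclude every $s\in[0,r)$ from $N_{x}(u)$. Assuming $\lim\limits_{\Lambda}\left.  _{\mathcal{F}(W)}\right.(a_{\lambda}(s)\,u_{\lambda}|_{W})=g$ for some $W\in\mathcal{V}_{x}$, I would pass to $W'=W\cap V\in\mathcal{V}_{x}$ --- continuity of the restriction maps of $\mathcal{F}$ carries both this limit and the hypothesised one for $a_{\lambda}(r)\,u_{\lambda}$ down to $W'$ --- and then apply the comparison identity with the roles of $s$ and $r$ interchanged, which is legitimate since $r=s+(r-s)$ with $r-s>0$. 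This yields $a_{\lambda}(r)\,u_{\lambda}|_{W'}=c_{\lambda}\bigl(a_{\lambda}(s)\,u_{\lambda}|_{W'}\bigr)\to 0$, while at the same time $a_{\lambda}(r)\,u_{\lambda}|_{W'}\to f|_{W'}$; by separatedness $f|_{W'}=0$, contradicting that $f$ is non-zero on every neighbourhood of $x$ contained in $V$. Hence $N_{x}(u)=[r,+\infty)$.

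Part $(iii)$ is then quick. The equivalence is immediate from $(i)$: $0\in N_{x}(u)$ forces $\mathbb{R}_{+}=[0,+\infty)\subset N_{x}(u)\subset\mathbb{R}_{+}$, and the converse is trivial. For the last assertion, assuming $0\in N_{x}(u)$, fix a witness $\lim\limits_{\Lambda}\left.  _{\mathcal{F}(V_{0})}\right.(u_{\lambda}|_{V_{0}})=f_{0}$ (recall $a(0)=1$); if $s>0$ and $\lim\limits_{\Lambda}\left.  _{\mathcal{F}(V)}\right.(a_{\lambda}(s)\,u_{\lambda}|_{V})=g$ for some $V\in\mathcal{V}_{x}$, then restricting to $V\cap V_{0}$ and playing $a_{\lambda}(s)\to 0$ against $u_{\lambda}|_{V\cap V_{0}}\to f_{0}|_{V\cap V_{0}}$ gives $a_{\lambda}(s)\,u_{\lambda}|_{V\cap V_{0}}\to 0$, whence $g|_{V\cap V_{0}}=0$ by uniqueness; so every limit obtained for $s>0$ vanishes on a neighbourhood of $x$, and only $s=0$ can produce a limit that is non-zero near $x$. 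I expect the main obstacle to be not any individual estimate but the bookkeeping: keeping the comparison identity an equality of nets valid for \emph{all} $\lambda$ (hence the case split at $a_{\lambda}(r)=0$), and making sure that at each transport of an $\mathcal{F}$-limit to a smaller neighbourhood one has both continuity of restrictions and uniqueness of limits available --- no local convexity beyond the existence of a balanced neighbourhood base is in fact used.
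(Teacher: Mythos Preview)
Your argument is correct and follows the same core mechanism as the paper's proof: use the submultiplicativity $a_{\lambda}(r+t)\leq a_{\lambda}(r)a_{\lambda}(t)$ to reduce the net $a_{\lambda}(s)\,u_{\lambda}$ to a scalar net tending to~$0$ times a convergent net, then conclude the product tends to~$0$; part~$(ii)$ is the same contradiction, and part~$(iii)$ follows at once.

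The only real difference is packaging. The paper works directly with the semi-norms $q_{j}$ of $\mathcal{F}(V)$, estimating $q_{j}(a_{\lambda}(s)u_{\lambda})\leq a_{\lambda}(s-r)\,q_{j}(a_{\lambda}(r)u_{\lambda})$ and using that the right-hand factor is bounded; you instead isolate the abstract TVS lemma ``convergent net times scalar net $\to 0$ gives~$0$'' via balanced neighbourhoods, and encode the estimate as an exact identity $a_{\lambda}(s)u_{\lambda}=c_{\lambda}\bigl(a_{\lambda}(r)u_{\lambda}\bigr)$ with your carefully defined $c_{\lambda}$. Your version has the small bonus, which you note, that local convexity is not actually used --- only separatedness (for uniqueness of limits) and the balanced neighbourhood base that every TVS possesses. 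For part~$(iii)$ the paper simply invokes~$(ii)$ (a nonzero limit at some $s$ would force $N_{x}(u)=[s,\infty)$, hence $s=0$), whereas you redo the computation directly; both are fine.
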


\begin{proof}
$\left(  i\right)  $~If $r\in N_{x}(u)$, there exist $V\in\mathcal{V}_{x}$ and
$f\in\mathcal{F(}V)$ such that $\lim\limits_{\Lambda}\left.  _{\mathcal{F}%
(V)}\right.  (a_{\lambda}(r)u_{\lambda\mid_{V}})=f.$ As $\mathcal{F}(V)$ is
locally convex, its topology may be described by a family $\mathcal{Q}%
_{V}=\left(  q_{j}\right)  _{j\in J\left(  V\right)  }$ of semi-norms. For all
$s>r$, we have, for any $j\in J\left(  V\right)  $,
\[
q_{j}(a_{\lambda}(s)\,(\,u_{\lambda}\left\vert _{V}\right.  ))=a_{\lambda
}(s)\,q_{j}(\,u_{\lambda}\left\vert _{V}\right.  )\leq a_{\lambda
}(s-r)\,a_{\lambda}(r)\,q_{j}(\,u_{\lambda}\left\vert _{V}\right.  )\leq
a_{\lambda}(s-r)\,q_{j}(a_{\lambda}(r)\,\,u_{\lambda}\left\vert _{V}\right.
).
\]
From $\lim\limits_{\Lambda}q_{j}\left(  a_{\lambda}(r)\,(u_{\lambda}\left\vert
_{V}\right.  -f)\right)  =0$, we have $q_{j}(a_{\lambda}(r)\,\,u_{\lambda
}\left\vert _{V}\right.  )<+\infty$ and $\lim\limits_{\Lambda}q_{j}\left(
a_{\lambda}(s)(\,u_{\lambda}\left\vert _{V}\right.  )\right)  =0$, since
$a_{\lambda}(s-r)\overset{\Lambda}{\rightarrow}0$. Thus $\lim\limits_{\Lambda
}\left.  _{\mathcal{F}(V)}\right.  (a_{\lambda}(s)\,u_{\lambda}\left\vert
_{V}\right.  )=0.$\smallskip

\noindent$\left(  ii\right)  $~From $\left(  i\right)  $, we have $\left[
r,+\infty\right)  \subset N_{x}(u)$. Suppose that there exists $t<r$ in
$N_{x}(u)$. Then we get $W\in\mathcal{V}_{x}$, which can be chosen included in
$V$, and $g\in\mathcal{F(}W)$ such that $\lim\limits_{\Lambda}\left.
_{\mathcal{F}(W)}\right.  (a_{\lambda}(t)\,u_{\lambda}\left\vert _{W}\right.
)=g$.\ With the notations of the proof of $\left(  i\right)  $, we have%
\[
q_{j}(a_{\lambda}(r)\,(u_{\lambda}\left\vert _{W}\right.  ))\leq a_{\lambda
}(r-t)q_{j}(a_{\lambda}(t)u_{\lambda}\left\vert _{W}\right.  ).
\]
As $q_{j}(a_{\lambda}(t)u_{\lambda}\left\vert _{V}\right.  )$ is bounded, it
follows that $\lim\limits_{\Lambda}q_{j}(a_{\lambda}(r)\,(u_{\lambda
}\left\vert _{W}\right.  ))=0$, which is in contradiction with $\lim
\limits_{\Lambda}\left.  _{\mathcal{F}(V)}\right.  (a_{\lambda}%
(r)\,(u_{\lambda}\left\vert _{V}\right.  )=f\not \equiv 0$ on $W.$\smallskip

\noindent$\left(  iii\right)  ~$The first assertion follows directly from
$\left(  i\right)  $ and the second from $\left(  ii\right)  $.\medskip
\end{proof}

From now on, we suppose that the hypotheses $(a)$ and $(b)$ of Theorem
\ref{SPSThmNx} are fulfilled. We set
\begin{gather*}
\Sigma_{\left(  a,\mathcal{F}\right)  ,x}(u)=\Sigma_{x}(u)=\mathbb{R}%
_{+}\backslash N_{x}(u),\\
R_{\left(  a,\mathcal{F}\right)  ,x}\left(  u\right)  =R_{x}(u)=\inf N_{x}(u).
\end{gather*}
According to the previous remarks and comments, $\Sigma_{\left(
a,\mathcal{F}\right)  ,x}(u)$ is an interval of $\mathbb{R}_{+}$ of the form
$\left[  0,R_{\left(  a,\mathcal{F}\right)  ,x}\left(  u\right)  \right)  $ or
$\left[  0,R_{\left(  a,\mathcal{F}\right)  ,x}\left(  u\right)  \right]  $,
the empty set, or $\mathbb{R}_{+}$.

\begin{definition}
\label{SPSDefSingSp}\textit{T}he $\left(  a,\mathcal{F}\right)  $%
\emph{-singular spectrum} of $u\in\mathcal{A}(\Omega)$ is the set%
\[
\mathcal{S}_{\mathcal{A}}^{\left(  a,\mathcal{F}\right)  }\left(  u\right)
=\left\{  (x,r)\in\Omega\times\mathbb{R}_{+}\,\left\vert \,r\in\Sigma
_{x}(u)\right.  \right\}  .
\]

\end{definition}

\begin{example}
\label{SPSExam3}Take $X=\mathbb{R}^{d}$, $\mathcal{E}=\mathrm{C}^{\infty}$,
$\mathcal{F}=\mathrm{C}^{p}$ ($p\in\overline{\mathbb{N}}=\mathbb{N\cup
}\left\{  +\infty\right\}  $), $f\in\mathrm{C}^{\infty}\left(  \Omega\right)
$.\ Set $u=\left[  \left(  \varepsilon^{-1}f\right)  _{\varepsilon}\right]  $
and $v=\left[  \left(  \varepsilon^{-1}\left\vert \ln\varepsilon\right\vert
f\right)  _{\varepsilon}\right]  $ in $\mathcal{A}\left(  \Omega\right)
=\mathcal{G}\left(  \Omega\right)  $. Then, for all $x\in\mathbb{R}$,
\[
N_{\left(  a,\mathrm{C}^{p}\right)  ,x}\left(  u\right)  =\left[
1,+\infty\right)  \,,\ \ \text{ }N_{\left(  a,\mathrm{C}^{p}\right)
,x}\left(  v\right)  =\left(  1,+\infty\right)  \,,\ \ \ R_{\left(
a,\mathrm{C}^{p}\right)  ,x}\left(  u\right)  =R_{\left(  a,\mathrm{C}%
^{p}\right)  ,x}\left(  v\right)  =1.
\]

\end{example}

\begin{remark}
\label{SPSRemSPS}We have: $\Sigma_{\left(  a,\mathcal{F}\right)
,x}(u)=\varnothing$ iff $N_{\left(  a,\mathcal{F}\right)  ,x}(u)=\mathbb{R}%
_{+}$ and, according to Theorem \ref{SPSThmNx}, iff $0\in N_{\left(
a,\mathcal{F}\right)  ,x}(u)$, that is, there exist $\left(  V,f\right)
\in\mathcal{V}_{x}\times\mathcal{F}(V)$ such that $\lim\limits_{\Lambda
}\left.  _{\mathcal{F}(V)}\right.  (a_{\lambda}(0)\,u_{\lambda}\left\vert
_{V}\right.  )=f$.\ As $a_{\lambda}(0)\equiv1$, this last assertion is
equivalent to $x\in\mathcal{O}_{\mathcal{A}}^{\mathcal{F}}\left(  u\right)  $.
Thus $\Sigma_{\left(  a,\mathcal{F}\right)  ,x}(u)=\varnothing$ iff
$x\notin\mathcal{S}_{\mathcal{A}}^{\mathcal{F}}(u)$.
\end{remark}

This remark implies directly the:

\begin{proposition}
\label{SPSProjSPS}The projection of the $\left(  a,\mathcal{F}\right)
$\emph{-}singular spectrum of $u$ on $\Omega$ is the $\mathcal{F}$-singular
support of $u$.
\end{proposition}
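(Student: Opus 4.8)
The plan is to show the two inclusions between the projection of $\mathcal{S}_{\mathcal{A}}^{\left(  a,\mathcal{F}\right)  }\left(  u\right)$ onto $\Omega$ and $\mathcal{S}_{\mathcal{A}}^{\mathcal{F}}\left(  u\right)$, exploiting the characterisation established in Remark \ref{SPSRemSPS}. Denote by $\pi:\Omega\times\mathbb{R}_{+}\rightarrow\Omega$ the canonical projection. By Definition \ref{SPSDefSingSp}, a point $x\in\Omega$ lies in $\pi\bigl(\mathcal{S}_{\mathcal{A}}^{\left(  a,\mathcal{F}\right)  }\left(  u\right)\bigr)$ if and only if there exists $r\in\mathbb{R}_{+}$ with $(x,r)\in\mathcal{S}_{\mathcal{A}}^{\left(  a,\mathcal{F}\right)  }\left(  u\right)$, that is, if and only if $r\in\Sigma_{\left(  a,\mathcal{F}\right)  ,x}(u)$ for some $r$; equivalently, $\Sigma_{\left(  a,\mathcal{F}\right)  ,x}(u)\neq\varnothing$.

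First I would invoke Remark \ref{SPSRemSPS}, which states precisely that $\Sigma_{\left(  a,\mathcal{F}\right)  ,x}(u)=\varnothing$ if and only if $x\notin\mathcal{S}_{\mathcal{A}}^{\mathcal{F}}(u)$. Taking contrapositives, $\Sigma_{\left(  a,\mathcal{F}\right)  ,x}(u)\neq\varnothing$ if and only if $x\in\mathcal{S}_{\mathcal{A}}^{\mathcal{F}}(u)$. Combining this with the reformulation of the previous paragraph yields
\[
x\in\pi\bigl(\mathcal{S}_{\mathcal{A}}^{\left(  a,\mathcal{F}\right)  }\left(  u\right)\bigr)\ \Longleftrightarrow\ \Sigma_{\left(  a,\mathcal{F}\right)  ,x}(u)\neq\varnothing\ \Longleftrightarrow\ x\in\mathcal{S}_{\mathcal{A}}^{\mathcal{F}}(u),
\]
which is the desired equality of sets. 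The whole argument is therefore a short unwinding of definitions together with one application of Remark \ref{SPSRemSPS}; there is no genuine obstacle, the only point requiring a line of care being the translation of "the projection contains $x$" into "$\Sigma_{x}(u)$ is non-empty", which is immediate from the defining formula for $\mathcal{S}_{\mathcal{A}}^{\left(  a,\mathcal{F}\right)  }\left(  u\right)$ as a subset of $\Omega\times\mathbb{R}_{+}$. One may optionally spell out that the hypotheses $(a)$ and $(b)$ of Theorem \ref{SPSThmNx} are in force, since Remark \ref{SPSRemSPS} relies on them.
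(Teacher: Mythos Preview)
Your proposal is correct and follows exactly the paper's approach: the paper simply states that the proposition is a direct consequence of Remark~\ref{SPSRemSPS}, and your argument is precisely the spelling-out of that implication via the equivalence $\Sigma_{(a,\mathcal{F}),x}(u)\neq\varnothing \Leftrightarrow x\in\mathcal{S}_{\mathcal{A}}^{\mathcal{F}}(u)$.
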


\subsection{Example: The Colombeau case}

In this subsection we investigate the relationship between the $\left(
a,\mathcal{F}\right)  $\emph{-}singular spectrum and the sharp topology for
$X=\mathbb{R}^{d}$, $\mathcal{E}=\mathrm{C}^{\infty}$, $\mathcal{F}%
=\mathrm{C}^{p}$ ($p\in\mathbb{N}$), $\mathcal{A}=\mathcal{G}$,
$a_{\varepsilon}\left(  r\right)  =\varepsilon^{r}$.\ First, let us remark
that, for $u=\left[  u_{\varepsilon}\right]  \in\mathcal{G}\left(
\Omega\right)  $, $x\in\Omega$ $(\Omega\in\mathcal{O}\left(  \mathbb{R}%
^{d}\right)  $), $N_{\left(  a,\mathrm{C}^{p}\right)  ,x}\left(  u\right)  $
is never empty.

Indeed, consider $V\in\mathcal{V}_{x}$ with $\overline{V}\Subset\Omega$. There
exists $m>0$ such that $p_{p,\overline{V}}\left(  u_{\varepsilon}\right)
=\mathrm{o}\left(  \varepsilon^{-m}\right)  $ as $\varepsilon\rightarrow
0$.\ Thus, $p_{k,\overline{V}}\left(  u_{\varepsilon}\right)  =o\left(
\varepsilon^{-m}\right)  $ for all $k\leq p$ and $\lim\limits_{\varepsilon
\rightarrow0}\left.  _{\mathrm{C}^{p}(V)}\right.  \left(  \varepsilon
^{m}u_{\varepsilon}\left\vert _{V}\right.  \right)  =0$.$\ $Thus $\left[
m,+\infty\right)  \subset N_{\left(  a,\mathrm{C}^{p}\right)  ,x}\left(
u\right)  .\smallskip$

Let us now recall the construction of the sharp topology on $\mathcal{G}%
\left(  \Omega\right)  $ . For $u=\left[  \left(  u_{\varepsilon}\right)
_{\varepsilon}\right]  \in\mathcal{G}\left(  \Omega\right)  $, $K\Subset
\Omega$, $l\in\mathbb{N}$, set%
\[
v_{K,l}(u)=\inf\left\{  r\in\mathbb{R\,}\left\vert \mathbb{\,}p_{K,l}\left(
u_{\varepsilon}\right)  =\mathrm{o}(\varepsilon^{-r})\ \text{as }%
\varepsilon\rightarrow0\right.  \right\}
\]
The real number $v_{K,l}(u)$ is well defined, i.e. does not depend on the
representative of $u$, and is called the $\left(  K,l\right)  $%
-\emph{valuation }of $u$. It has the usual properties:\newline$\left(
i\right)  ~\forall\lambda\in\mathbb{C}\backslash\{0\},\;\forall u\in
\mathcal{G}\left(  \Omega\right)  ,\;$ $v_{K,l}(\lambda u)=v_{K,l}(u)$
;\newline$\left(  ii\right)  ~\forall u,v\in\mathcal{G}\left(  \Omega\right)
,\;$ $v_{K,l}(u+v)\leq\sup(v_{K,l}(u),v_{K,l}(v))$.$\smallskip$

The family $\left(  v_{K,l}\right)  $ permits to define the $\left(
K,l\right)  $\emph{-pseudodistances} $d_{K,l}$ on $\mathcal{G}\left(
\Omega\right)  $ by
\[
\forall\left(  u,v\right)  \in\mathcal{G}\left(  \Omega\right)  ^{2}%
,\ \ \ d_{K,l}\left(  u,v\right)  =\exp\left(  v_{K,l}(u-v)\right)  ,
\]
which turns out to be ultrametric:%
\[
\forall\left(  u,v,w\right)  \in\mathcal{G}\left(  \Omega\right)
^{3},\ \ \ d_{K,l}\left(  u,v\right)  \leq\sup(d_{K,l}(u,w),d_{K,l}(w,v)).
\]
The topology defined by the family $\left(  d_{K,l}\right)  _{K,l}$ is called
the sharp topology on $\mathcal{G}\left(  \Omega\right)  $.\smallskip

As we are interested here in valuations greater or equal to $0$, we set, for
$u\in\mathcal{G}\left(  \Omega\right)  $,
\[
\nu_{K,l}(u)=\sup\left(  v_{K,l}(u),0\right)  .
\]
We can define, for $x\in\Omega$, the $l$-\emph{valuation }of $u$ at $x$ by
\[
\nu_{x,l}(u)=\inf\left\{  \nu_{\overline{V},l}(u)\mathbb{\,}\left\vert
\mathbb{\,}V\in\mathcal{V}\left(  x\right)  ,\ V\text{ relatively
compact}\right.  \right\}
\]
and set, for any $p\in\overline{\mathbb{N}}$,
\[
\nu_{x}^{p}(u)=\sup_{0\leq l\leq p}\nu_{x,l}(u).
\]

\begin{proposition}
\label{SPSSharpAF}For all $p\in\overline{\mathbb{N}}$, $\left[  u_{\varepsilon
}\right]  \in\mathcal{G}\left(  \Omega\right)  $ and $x\in\Omega$, we have
\[
\nu_{x}^{p}(u)=R_{\left(  a,\mathrm{C}^{p}\right)  ,x}\left(  u\right)  =\inf
N_{\left(  a,\mathrm{C}^{p}\right)  ,x}\left(  u\right)  .
\]

\end{proposition}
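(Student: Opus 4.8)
The plan is to show the two inequalities $\nu_x^p(u)\le R_{(a,\mathrm{C}^p),x}(u)$ and $R_{(a,\mathrm{C}^p),x}(u)\le\nu_x^p(u)$ by unwinding both definitions in terms of the growth orders $p_{K,l}(u_\varepsilon)=\mathrm{o}(\varepsilon^{-r})$. The crucial translation is the following: for a relatively compact open $V$ and $r\ge 0$, one has $\lim_{\varepsilon\to 0}{}_{\mathrm{C}^p(V)}\,\varepsilon^{r}u_\varepsilon\big|_V=0$ if and only if $\varepsilon^{r}p_{\overline V,l}(u_\varepsilon)\to 0$ for every $l\le p$, i.e. iff $p_{\overline V,l}(u_\varepsilon)=\mathrm{o}(\varepsilon^{-r})$ for all $l\le p$; since the $\mathrm C^p$-topology on $V$ is given by the seminorms $p_{K,l}$ with $K\Subset V$, $l\le p$, and convergence to $0$ means each such seminorm of $\varepsilon^{r}u_\varepsilon$ tends to $0$, this is exactly $r>v_{K,l}(u)$ for all such $K,l$. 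By Theorem~\ref{SPSThmNx}(i), for $r\in N_x(u)$ with $r>0$ we in fact get $\lim_{\varepsilon\to 0}{}_{\mathrm{C}^p(V)}\,\varepsilon^{r'}u_\varepsilon\big|_V=0$ for every $r'>r$ (after possibly shrinking $V$ for each $r'$, but one may take $V$ fixed since enlarging $r'$ only helps), so the distinction between ``limit zero'' and ``limit a nonzero section'' costs at most an endpoint and does not affect the infimum.

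First I would prove $R_{(a,\mathrm{C}^p),x}(u)\le\nu_x^p(u)$. Let $\rho>\nu_x^p(u)$ be arbitrary. By definition of $\nu_x^p(u)=\sup_{l\le p}\nu_{x,l}(u)$ and of $\nu_{x,l}(u)=\inf\{\nu_{\overline V,l}(u): V\in\mathcal V(x)\text{ rel. compact}\}$, for each $l\le p$ there is a relatively compact $V_l\in\mathcal V_x$ with $\nu_{\overline{V_l},l}(u)<\rho$; intersecting the finitely many $V_l$ (here $p<\infty$; the case $p=\infty$ is handled by noting $\nu_x^\infty(u)=\sup_l\nu_{x,l}(u)$ and taking $\rho$ above it forces, for a single rel.\ compact $V\in\mathcal V_x$ with $\overline V\Subset\Omega$, a bound valid for all $l$ — actually one must argue more carefully here, see below) gives a single relatively compact $V\in\mathcal V_x$ with $\nu_{\overline V,l}(u)<\rho$, hence $v_{\overline V,l}(u)<\rho$, for all $l\le p$. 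By the translation above, $\varepsilon^{\rho}p_{\overline V,l}(u_\varepsilon)\to 0$ for all $l\le p$, so $\lim_{\varepsilon\to 0}{}_{\mathrm{C}^p(V)}\,\varepsilon^{\rho}u_\varepsilon\big|_V=0$, giving $\rho\in N_x(u)$. Hence $R_x(u)=\inf N_x(u)\le\rho$; letting $\rho\downarrow\nu_x^p(u)$ yields the inequality.

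Next, $\nu_x^p(u)\le R_{(a,\mathrm{C}^p),x}(u)$. Let $r\in N_x(u)$, so there is $V\in\mathcal V_x$ and $f\in\mathrm C^p(V)$ with $\lim_{\varepsilon\to 0}{}_{\mathrm{C}^p(V)}\,\varepsilon^{r}u_\varepsilon\big|_V=f$. Shrinking $V$ to a relatively compact $V'\in\mathcal V_x$ with $\overline{V'}\subset V$, convergence in $\mathrm C^p(V)$ implies $(\varepsilon^{r}p_{\overline{V'},l}(u_\varepsilon))_\varepsilon$ is bounded for each $l\le p$, hence $p_{\overline{V'},l}(u_\varepsilon)=\mathrm O(\varepsilon^{-r})$, so $v_{\overline{V'},l}(u)\le r$ and $\nu_{\overline{V'},l}(u)\le\max(r,0)=r$. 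Therefore $\nu_{x,l}(u)\le\nu_{\overline{V'},l}(u)\le r$ for all $l\le p$, giving $\nu_x^p(u)=\sup_{l\le p}\nu_{x,l}(u)\le r$; taking the infimum over $r\in N_x(u)$ gives $\nu_x^p(u)\le R_x(u)$. (If $N_x(u)=\varnothing$ the inequality is vacuous since $R_x(u)=+\infty$.)

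The main obstacle I anticipate is the bookkeeping for $p=\infty$ in the first inequality: $\nu_x^\infty(u)=\sup_{l\in\mathbb N}\nu_{x,l}(u)$ is a supremum over infinitely many $l$, and one cannot intersect infinitely many neighborhoods $V_l$ and stay open. The fix is to work with a single relatively compact $V\in\mathcal V_x$ with $\overline V\Subset\Omega$: then $\mathrm C^\infty$-moderateness gives, for each $l$, some $m_l$ with $p_{\overline V,l}(u_\varepsilon)=\mathrm o(\varepsilon^{-m_l})$, so $v_{\overline V,l}(u)\le m_l<\infty$; and $\nu_{x,l}(u)\le\nu_{\overline V,l}(u)$, so $\sup_l\nu_{x,l}(u)\le\sup_l\nu_{\overline V,l}(u)$. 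One must then check that for $\rho>\sup_l\nu_{\overline V,l}(u)$ one has $\varepsilon^\rho u_\varepsilon|_V\to 0$ in $\mathrm C^\infty(V)$, i.e. $\varepsilon^\rho p_{K,l}(u_\varepsilon)\to 0$ for every $K\Subset V$ and every $l$; since $v_{K,l}(u)\le v_{\overline V,l}(u)\le\nu_{\overline V,l}(u)<\rho$ this holds seminorm by seminorm, which is exactly what $\mathrm C^\infty(V)$-convergence requires. Modulo this care, the argument is the routine translation between the sharp valuations and the associated-limit description of $N_x(u)$, with Theorem~\ref{SPSThmNx} guaranteeing that endpoint subtleties do not perturb the infimum.
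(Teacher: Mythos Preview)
Your approach is the same as the paper's: establish the two inequalities by translating between the valuations $v_{K,l}$ and the convergence condition defining $N_x(u)$. For finite $p$ your argument is correct and matches the paper's proof essentially line for line; you are in fact more careful than the paper in the second inequality, correctly writing $v_{\overline{V'},l}(u)\le r$ from the $O(\varepsilon^{-r})$ bound where the paper writes a strict inequality.

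There is, however, a genuine gap in your treatment of $p=\infty$, and it is worth naming precisely. Your proposed fix shows that for any fixed relatively compact $V$, if $\rho>\sup_{l}\nu_{\overline V,l}(u)$ then $\rho\in N_x(u)$; this yields $R_x(u)\le \inf_{V}\sup_{l}\nu_{\overline V,l}(u)$. But $\nu_x^{\infty}(u)=\sup_{l}\inf_{V}\nu_{\overline V,l}(u)$, and in general $\sup_{l}\inf_{V}\le\inf_{V}\sup_{l}$ with possible strict inequality, so your bound does not deliver $R_x(u)\le\nu_x^{\infty}(u)$. The sentence ``$\nu_{x,l}(u)\le\nu_{\overline V,l}(u)$, so $\sup_l\nu_{x,l}(u)\le\sup_l\nu_{\overline V,l}(u)$'' is true but points the wrong way for the conclusion you need. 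To close the argument you would have to show that the neighbourhoods $V_l$ witnessing $\nu_{\overline{V_l},l}(u)<\rho$ can be chosen \emph{uniformly} in $l$, or equivalently that the minimax equality $\inf_V\sup_l=\sup_l\inf_V$ holds here; neither is automatic. It should be said that the paper's own proof glosses over exactly this point---it silently lets $V$ depend on $l$ and then asserts $\mathrm C^p(V)$-convergence on a single $V$---so you have at least identified an issue the paper does not address.
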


\begin{proof}
Take $r>\nu_{x}^{p}(u)$.\ Then, for any $l$ with $0\leq l\leq p$, one has
$r>\nu_{x,l}(u)$ and there exists $V\in\mathcal{V}\left(  x\right)  ,\ V$
relatively compact, such that $v_{\overline{V},l}(u)<r$. Thus, $\mathbb{\,}%
p_{\overline{V},l}\left(  u_{\varepsilon}\right)  =\mathrm{o}(\varepsilon
^{-r}),\ $as $\varepsilon\rightarrow0$, and $\lim\limits_{\varepsilon
\rightarrow0}\left.  _{\mathrm{C}^{p}(V)}\right.  (\varepsilon^{r}%
u_{\varepsilon}\left\vert _{V}\right.  )=0$, which implies that $r>R_{\left(
a,\mathrm{C}^{p}\right)  ,x}\left(  u\right)  $ and $\nu_{x}^{p}(u)\geq$
$R_{\left(  a,\mathrm{C}^{p}\right)  ,x}\left(  u\right)  $. Conversely, if
$r>R_{\left(  a,\mathrm{C}^{p}\right)  ,x}\left(  u\right)  $, there exists
$V\in\mathcal{V}\left(  x\right)  $ such that $\lim\limits_{\varepsilon
\rightarrow0}\left.  _{\mathrm{C}^{p}(V)}\right.  (\varepsilon^{r}%
u_{\varepsilon}\left\vert _{V}\right.  )=0$.\ For any relatively compact
neighborhood $W$ of $x$ included in $V$, we get $p_{\overline{W},l}\left(
u_{\varepsilon}\right)  =\mathrm{o}(\varepsilon^{-r})$ and $r>v_{\overline
{W},l}(u)>\nu_{x,l}(u)$. Thus, $r\geq\nu_{x}^{p}(u)$ and $\nu_{x}^{p}(u)\leq
R_{\left(  a,\mathrm{C}^{p}\right)  ,x}\left(  u\right)  $.
\end{proof}

\subsection{Some properties of the $(a,\mathcal{F})$-singular parametric
spectrum}

\begin{notation}
For $u=\left[  u_{\lambda}\right]  \in\mathcal{A}\left(  \Omega\right)  $,
$\lim\limits_{\Lambda}\left.  _{\mathcal{F}(V)}\right.  \left(  a_{\lambda
}(r)\,u_{\lambda}\left\vert _{V}\right.  \right)  \in\mathcal{F}\left(
V\right)  $ means that there exists $f\in\mathcal{F}\left(  V\right)  $ such
that $\lim\limits_{\Lambda}\left.  _{\mathcal{F}(V)}\right.  \left(
a_{\lambda}(r)\,u_{\lambda}\left\vert _{V}\right.  \right)  =f$.
\end{notation}

\subsubsection{Linear properties}

\begin{proposition}
\label{SPSSpSsum}For any $u,v\in\mathcal{A}(\Omega)$, we have%
\[
\mathcal{S}_{\mathcal{A}}^{\left(  a,\mathcal{F}\right)  }\left(  u+v\right)
\subset\mathcal{S}_{\mathcal{A}}^{\left(  a,\mathcal{F}\right)  }\left(
u\right)  \cup\mathcal{S}_{\mathcal{A}}^{\left(  a,\mathcal{F}\right)
}\left(  v\right)  .
\]

\end{proposition}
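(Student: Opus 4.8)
The plan is to reduce the statement about singular spectra to a statement about the sets $N_x$, namely to prove that $N_x(u) \cap N_x(v) \subset N_x(u+v)$ for every $x \in \Omega$, and then take complements in $\mathbb{R}_+$ and form the union over $x$. Recall that by definition $\mathcal{S}_{\mathcal{A}}^{(a,\mathcal{F})}(u) = \{(x,r) : r \in \Sigma_x(u)\}$ with $\Sigma_x(u) = \mathbb{R}_+ \setminus N_x(u)$. If $(x,r) \notin \mathcal{S}_{\mathcal{A}}^{(a,\mathcal{F})}(u) \cup \mathcal{S}_{\mathcal{A}}^{(a,\mathcal{F})}(v)$, then $r \in N_x(u)$ and $r \in N_x(v)$; the goal is to conclude $r \in N_x(u+v)$, i.e. $(x,r) \notin \mathcal{S}_{\mathcal{A}}^{(a,\mathcal{F})}(u+v)$.

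First I would fix $x \in \Omega$ and $r \in N_x(u) \cap N_x(v)$. By definition of $N_x$, there exist $V \in \mathcal{V}_x$ and $f \in \mathcal{F}(V)$ with $\lim_{\Lambda} {}_{\mathcal{F}(V)}(a_\lambda(r) u_\lambda|_V) = f$, and there exist $W \in \mathcal{V}_x$ and $g \in \mathcal{F}(W)$ with $\lim_{\Lambda} {}_{\mathcal{F}(W)}(a_\lambda(r) v_\lambda|_W) = g$. Set $U = V \cap W \in \mathcal{V}_x$. Since $\mathcal{F}$ is a presheaf of topological vector spaces, the restriction maps $\mathcal{F}(V) \to \mathcal{F}(U)$ and $\mathcal{F}(W) \to \mathcal{F}(U)$ are continuous linear maps, so $a_\lambda(r) u_\lambda|_U \to f|_U$ and $a_\lambda(r) v_\lambda|_U \to g|_U$ in $\mathcal{F}(U)$. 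Adding these two limits (continuity of addition in the topological vector space $\mathcal{F}(U)$) gives $\lim_{\Lambda} {}_{\mathcal{F}(U)}\big(a_\lambda(r)(u_\lambda + v_\lambda)|_U\big) = f|_U + g|_U \in \mathcal{F}(U)$. Since $(u_\lambda + v_\lambda)_\lambda$ is a representative of $u+v$, this shows $r \in N_x(u+v)$.

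Taking complements, $\Sigma_x(u+v) = \mathbb{R}_+ \setminus N_x(u+v) \subset \mathbb{R}_+ \setminus (N_x(u) \cap N_x(v)) = \Sigma_x(u) \cup \Sigma_x(v)$ for every $x \in \Omega$, and hence
\[
\mathcal{S}_{\mathcal{A}}^{(a,\mathcal{F})}(u+v) = \bigcup_{x \in \Omega} \{x\} \times \Sigma_x(u+v) \subset \bigcup_{x \in \Omega} \{x\} \times \big(\Sigma_x(u) \cup \Sigma_x(v)\big) = \mathcal{S}_{\mathcal{A}}^{(a,\mathcal{F})}(u) \cup \mathcal{S}_{\mathcal{A}}^{(a,\mathcal{F})}(v),
\]
which is the claim. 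There is no real obstacle here: the argument is a direct analogue of the proof of Proposition \ref{SPStheoSingSupp} (the $r=0$ case), with the only extra ingredient being that one works on a common neighborhood $U = V \cap W$ and uses continuity of restriction and addition in $\mathcal{F}$; the hypotheses $(a)$ and $(b)$ standing in force (in particular $\mathcal{F}$ being a presheaf of separated locally convex spaces) are more than enough. The one point to be careful about is that the representative-independence of $N_x$ has already been noted, so choosing the representative $(u_\lambda + v_\lambda)_\lambda$ for $u+v$ is legitimate.
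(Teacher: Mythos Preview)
Your proof is correct and follows essentially the same approach as the paper: show $N_x(u)\cap N_x(v)\subset N_x(u+v)$ by passing to the common neighborhood $V\cap W$ and using the vector space structure of $\mathcal{F}(V\cap W)$, then take complements in $\mathbb{R}_+$. You are slightly more explicit than the paper about the continuity of restriction and addition and about representative-independence, but the argument is the same.
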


\begin{proof}
Let $r$ be in $N_{x}(u)\cap N_{x}(v)$. Then there exist $V\in\mathcal{V}_{x}$
and $W\in\mathcal{V}_{x}$ such that%
\[
\lim\limits_{\Lambda}\left.  _{\mathcal{F}(V)}\right.  \left(  a_{\lambda
}(r)\,u_{\lambda}\left\vert _{V}\right.  \right)  \in\mathcal{F}\left(
V\right)  \text{ and }\lim\limits_{\Lambda}\left.  _{\mathcal{F}(W)}\right.
\left(  a_{\lambda}(r)\,v_{\lambda}\left\vert _{W}\right.  \right)
\in\mathcal{F}\left(  W\right)  \text{.}%
\]
Thus $\lim\limits_{\Lambda}\left.  _{\mathcal{F}(V\cap W)}\right.
(a_{\lambda}(r)\left(  u_{\lambda}+v_{\lambda}\right)  \left\vert _{V\cap
W}\right.  )\in\mathcal{F}\left(  V\cap W\right)  $ and $r\in N_{x}(u+v)$.
Consequently,
\[
N_{x}(u)\cap N_{x}(v)\subset N_{x}(u+v)\text{.}%
\]
We obtain the result by taking the complementary sets in $\mathbb{R}_{+}$.
\end{proof}

\begin{corollary}
\label{SPSSpScor1}\textit{F}or any $u$, $u_{0}$, $u_{1}$\ in $\mathcal{A}%
(\Omega)$ with%
\[
\left(  i\right)  \mathit{\ }u=u_{0}+u_{1}\ \ \ \ \ \ (ii)\mathit{\ }%
\mathcal{S}_{\mathcal{A}}^{\left(  a,\mathcal{F}\right)  }\left(
u_{0}\right)  =\varnothing,
\]
\textit{we have}%
\[
\mathcal{S}_{\mathcal{A}}^{\left(  a,\mathcal{F}\right)  }\left(  u\right)
=\mathcal{S}_{\mathcal{A}}^{\left(  a,\mathcal{F}\right)  }\left(
u_{1}\right)  \text{.}%
\]

\end{corollary}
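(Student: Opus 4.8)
The plan is to derive the equality from Proposition \ref{SPSSpSsum} applied twice, together with the elementary observation that the $\left(a,\mathcal{F}\right)$-singular spectrum is invariant under multiplication by the scalar $-1$. First I would record this invariance: if $\left(u_{0,\lambda}\right)_{\lambda}$ is a representative of $u_{0}$, then $a_{\lambda}(r)\left(-u_{0,\lambda}\right)\vert_{V}=-\left(a_{\lambda}(r)\,u_{0,\lambda}\vert_{V}\right)$, so that $\lim\limits_{\Lambda}\left._{\mathcal{F}(V)}\right.\left(a_{\lambda}(r)\,u_{0,\lambda}\vert_{V}\right)=f$ implies $\lim\limits_{\Lambda}\left._{\mathcal{F}(V)}\right.\left(a_{\lambda}(r)\left(-u_{0,\lambda}\right)\vert_{V}\right)=-f$; since $\mathcal{F}$ is a presheaf of vector spaces, $-f\in\mathcal{F}(V)$. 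Hence $N_{x}(-u_{0})=N_{x}(u_{0})$ for every $x$, and consequently $\mathcal{S}_{\mathcal{A}}^{\left(a,\mathcal{F}\right)}(-u_{0})=\mathcal{S}_{\mathcal{A}}^{\left(a,\mathcal{F}\right)}(u_{0})=\varnothing$ by hypothesis $(ii)$.

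Next I would prove the first inclusion. Writing $u=u_{0}+u_{1}$ and applying Proposition \ref{SPSSpSsum} gives
\[
\mathcal{S}_{\mathcal{A}}^{\left(a,\mathcal{F}\right)}\left(u\right)\subset\mathcal{S}_{\mathcal{A}}^{\left(a,\mathcal{F}\right)}\left(u_{0}\right)\cup\mathcal{S}_{\mathcal{A}}^{\left(a,\mathcal{F}\right)}\left(u_{1}\right)=\mathcal{S}_{\mathcal{A}}^{\left(a,\mathcal{F}\right)}\left(u_{1}\right),
\]
using $(ii)$. For the reverse inclusion, I would write $u_{1}=u+\left(-u_{0}\right)$ and again invoke Proposition \ref{SPSSpSsum}, obtaining
\[
\mathcal{S}_{\mathcal{A}}^{\left(a,\mathcal{F}\right)}\left(u_{1}\right)\subset\mathcal{S}_{\mathcal{A}}^{\left(a,\mathcal{F}\right)}\left(u\right)\cup\mathcal{S}_{\mathcal{A}}^{\left(a,\mathcal{F}\right)}\left(-u_{0}\right)=\mathcal{S}_{\mathcal{A}}^{\left(a,\mathcal{F}\right)}\left(u\right),
\]
by the invariance established above. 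Combining the two inclusions yields the claimed equality.

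There is essentially no serious obstacle here; the only point that needs a word of care — and which I would make explicit rather than leave implicit — is the scalar invariance $\mathcal{S}_{\mathcal{A}}^{\left(a,\mathcal{F}\right)}(-u_{0})=\mathcal{S}_{\mathcal{A}}^{\left(a,\mathcal{F}\right)}(u_{0})$, since Proposition \ref{SPSSpSsum} is stated only for sums and one must know that subtracting $u_{0}$ is covered. This rests solely on $\mathcal{F}$ being a presheaf of vector spaces, which is part of the standing hypotheses in this subsection, so the argument goes through without further assumptions.
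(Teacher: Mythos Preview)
Your proof is correct and follows essentially the same approach as the paper: apply Proposition~\ref{SPSSpSsum} to $u=u_{0}+u_{1}$ for one inclusion, then rewrite $u_{1}=u+(-u_{0})$ and apply it again for the reverse inclusion. The paper leaves the scalar invariance $\mathcal{S}_{\mathcal{A}}^{(a,\mathcal{F})}(-u_{0})=\mathcal{S}_{\mathcal{A}}^{(a,\mathcal{F})}(u_{0})$ implicit, whereas you spell it out; this extra care is appropriate and does not change the argument.
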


\begin{proof}
Proposition \ref{SPSSpSsum} and condition $(ii)$ give $\mathcal{S}%
_{\mathcal{A}}^{\left(  a,\mathcal{F}\right)  }\left(  u\right)
\subset\mathcal{S}_{\mathcal{A}}^{\left(  a,\mathcal{F}\right)  }\left(
u_{1}\right)  $. As $(i)$ implies $u_{0}=u-u_{1}$, we obtain the converse
inclusion, and thus the equality.
\end{proof}

\subsubsection{Differential properties}

We suppose that $\mathcal{F}$ is a sheaf of topological differential vector
spaces (resp. algebras), with continuous differentiation, admitting
$\mathcal{E}$ as a subsheaf of topological differential algebras. Then the
sheaf $\mathcal{A}$ is also a sheaf of differential algebras with, for any
$\alpha\in\mathbb{N}^{d}$ and $u\in\mathcal{A}\left(  \Omega\right)  $,
\[
\partial^{\alpha}u=\left[  \partial^{\alpha}u_{\lambda}\right]  \text{, where
}\left(  u_{\lambda}\right)  _{\lambda}\text{ is any representative of
}u\text{.}%
\]
The independence of $\partial^{\alpha}u$ on the choice of representative
follows directly from the definition of $\mathcal{J}_{(I_{A},\mathcal{E}%
,\mathcal{P})}$.)

\begin{proposition}
\label{SPSSpSDerivee}\textit{Let }$u\ $be in $\mathcal{A}(\Omega)$\textit{.
For all }$\partial^{\alpha}$, $\alpha\in\mathbb{N}^{d}$, we have\textit{\ }%
\[
\mathcal{S}_{\mathcal{A}}^{\left(  a,\mathcal{F}\right)  }\left(
\partial^{\alpha}u\right)  \subset\mathcal{S}_{\mathcal{A}}^{\left(
a,\mathcal{F}\right)  }\left(  u\right)  \text{.}%
\]

\end{proposition}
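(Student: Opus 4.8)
The plan is to show the inclusion $N_x(u) \subset N_x(\partial^\alpha u)$ for every $x \in \Omega$ and every multi-index $\alpha$; taking complements in $\mathbb{R}_+$ and then the union over $x$ yields the stated inclusion of singular spectra. So fix $x \in \Omega$ and $r \in N_x(u)$. By definition there exist $V \in \mathcal{V}_x$ and $f \in \mathcal{F}(V)$ with $\lim_\Lambda {}_{\mathcal{F}(V)}\,(a_\lambda(r)\,u_\lambda|_V) = f$. The key point is that differentiation $\partial^\alpha : \mathcal{F}(V) \to \mathcal{F}(V)$ is continuous and linear by the standing hypothesis on $\mathcal{F}$, and that $\partial^\alpha$ commutes with multiplication by the scalar net $(a_\lambda(r))_\lambda$ since each $a_\lambda(r) \in \mathbb{K}$ is a constant. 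Hence $a_\lambda(r)\,\partial^\alpha(u_\lambda|_V) = \partial^\alpha(a_\lambda(r)\,u_\lambda|_V)$, and applying the continuous map $\partial^\alpha$ to the convergent net gives
\[
\lim\limits_{\Lambda}\left. {}_{\mathcal{F}(V)}\right.\left(a_\lambda(r)\,(\partial^\alpha u)_\lambda\big|_V\right) = \partial^\alpha f \in \mathcal{F}(V).
\]
Since $(\partial^\alpha u_\lambda)_\lambda$ is by definition a representative of $\partial^\alpha u$ and $N_x$ is representative-independent, this shows $r \in N_x(\partial^\alpha u)$.

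It suffices to treat $|\alpha| = 1$, i.e.\ a single partial derivative $\partial_j$, and then iterate, since $N_x(u) \subset N_x(\partial_j u)$ for all $j$ immediately gives $N_x(u) \subset N_x(\partial^\alpha u)$ by induction on $|\alpha|$. From $N_x(u) \subset N_x(\partial^\alpha u)$ we get $\Sigma_x(\partial^\alpha u) = \mathbb{R}_+ \setminus N_x(\partial^\alpha u) \subset \mathbb{R}_+ \setminus N_x(u) = \Sigma_x(u)$, whence $\{(x,r) : r \in \Sigma_x(\partial^\alpha u)\} \subset \{(x,r) : r \in \Sigma_x(u)\}$, which is exactly $\mathcal{S}_{\mathcal{A}}^{(a,\mathcal{F})}(\partial^\alpha u) \subset \mathcal{S}_{\mathcal{A}}^{(a,\mathcal{F})}(u)$.

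The only subtlety — and it is not really an obstacle, just a point to state carefully — is that one must work at the level of a fixed representative and use that $\mathcal{J}_{(I_A,\mathcal{E},\mathcal{P})}$ is stable under $\partial^\alpha$ (already noted in the paragraph preceding the proposition), so that $\partial^\alpha u$ is well defined and the choice of representative $(u_\lambda)_\lambda$ of $u$ produces the representative $(\partial^\alpha u_\lambda)_\lambda$ of $\partial^\alpha u$ used above. No hypothesis on the net $a$ beyond $a(0)=1$ is needed here; continuity of differentiation on $\mathcal{F}$ does all the work, and the argument does not even require the gluing principle $(F_2)$, only the definition of $N_x$.
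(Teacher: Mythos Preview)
Your proof is correct and follows essentially the same approach as the paper: show $N_x(u)\subset N_x(\partial^\alpha u)$ by applying the continuous operator $\partial^\alpha$ to the convergent net $(a_\lambda(r)u_\lambda|_V)_\lambda$, then pass to complements. You simply spell out a few points (commutation of $\partial^\alpha$ with the scalar $a_\lambda(r)$, representative-independence, the optional reduction to $|\alpha|=1$) that the paper leaves implicit.
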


\begin{proof}
Take $u\in\mathcal{A}(\Omega)$, $\alpha\in\mathbb{N}^{d}$, $x\in\Omega$, $r\in
N_{x}(u)$. There exists $V\in\mathcal{V}_{x},$ $f\in\mathcal{F}\left(
V\right)  $ such that%
\[
\lim\limits_{\Lambda}\left.  _{\mathcal{F}(V)}\right.  \left(  a_{\lambda
}(r)\,u_{\lambda}\left\vert _{V}\right.  \right)  =f.
\]
The continuity of $\partial^{\alpha}$ implies that
\[
\lim\limits_{\Lambda}\left.  _{\mathcal{F}(V)}\right.  \left(  a_{\lambda
}(r)\partial^{\alpha}\,u_{\lambda}\left\vert _{V}\right.  \right)
=\partial^{\alpha}f.
\]
Thus $N_{x}(u)\subset N_{x}(\partial^{\alpha}u)$. The result is
proved.\medskip
\end{proof}

In the following two results we require that $\mathcal{F}$ is a sheaf of
topological modules over $\mathcal{E}$, in addition. The proofs are straightforward.

\begin{proposition}
\label{SPSSpSMult}\textit{Let }$g$ be in $\mathcal{E}(\Omega)$ and $u\ $in
$\mathcal{A}(\Omega)$\textit{. We have}%
\[
\mathcal{S}_{\mathcal{A}}^{\left(  a,\mathcal{F}\right)  }\left(  gu\right)
\subset\mathcal{S}_{\mathcal{A}}^{\left(  a,\mathcal{F}\right)  }\left(
u\right)  \text{.}%
\]

\end{proposition}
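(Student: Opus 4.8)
The plan is to mimic the proof of Proposition \ref{SPSSpSDerivee}, replacing differentiation by multiplication with the fixed function $g\in\mathcal{E}(\Omega)$. It suffices to show the pointwise inclusion $N_{x}(u)\subset N_{x}(gu)$ for every $x\in\Omega$; passing to complements in $\mathbb{R}_{+}$ and then taking the union over $x\in\Omega$ gives $\mathcal{S}_{\mathcal{A}}^{(a,\mathcal{F})}(gu)\subset\mathcal{S}_{\mathcal{A}}^{(a,\mathcal{F})}(u)$. Note first that $gu$ is well defined in $\mathcal{A}(\Omega)$: if $(u_\lambda)_\lambda\in\mathcal{H}_{(A,\mathcal{E},\mathcal{P})}(\Omega)$ then, $g$ being a fixed element of $\mathcal{E}(\Omega)$ and $\mathcal{E}$ a sheaf of topological algebras, $(gu_\lambda)_\lambda\in\mathcal{H}_{(A,\mathcal{E},\mathcal{P})}(\Omega)$ as well (the estimates defining $A$ are stable under multiplication by a fixed $\mathcal{E}$-element thanks to (2) and the submultiplicativity of the seminorms), and similarly $\mathcal{J}_{(I_{A},\mathcal{E},\mathcal{P})}(\Omega)$ is stable under this multiplication, so the class $[gu_\lambda]$ does not depend on the representative. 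These facts are exactly what is recorded before the statement when it says $\mathcal{A}$ is a sheaf of modules over $\mathcal{E}$.

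Now fix $x\in\Omega$ and $r\in N_{x}(u)$. By definition there exist $V\in\mathcal{V}_{x}$ and $f\in\mathcal{F}(V)$ with
\[
\lim\limits_{\Lambda}\left._{\mathcal{F}(V)}\right.\left(a_{\lambda}(r)\,u_{\lambda}\left\vert_{V}\right.\right)=f.
\]
Since $\mathcal{F}$ is, by hypothesis, a sheaf of topological modules over $\mathcal{E}$, multiplication by the fixed section $g\vert_{V}\in\mathcal{E}(V)\subset\mathcal{F}(V)$ is a continuous linear map $\mathcal{F}(V)\to\mathcal{F}(V)$. Applying it to the convergent net and using that $a_{\lambda}(r)$ is a scalar, we obtain
\[
\lim\limits_{\Lambda}\left._{\mathcal{F}(V)}\right.\left(a_{\lambda}(r)\,(g u_{\lambda})\left\vert_{V}\right.\right)=\lim\limits_{\Lambda}\left._{\mathcal{F}(V)}\right.\left(g\vert_{V}\cdot a_{\lambda}(r)\,u_{\lambda}\left\vert_{V}\right.\right)=g\vert_{V}\cdot f\in\mathcal{F}(V).
\]
Hence $r\in N_{x}(gu)$, which gives $N_{x}(u)\subset N_{x}(gu)$, i.e. $\Sigma_{x}(gu)\subset\Sigma_{x}(u)$, and the claimed inclusion of singular spectra follows.

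The only real point to be careful about is the continuity of multiplication by $g$ on $\mathcal{F}(V)$: this is granted by the standing assumption that $\mathcal{F}$ is a sheaf of \emph{topological} modules over $\mathcal{E}$, so there is no genuine obstacle here — the paper itself flags the proof as ``straightforward''. If one wanted to be fully explicit at the level of seminorms, one would describe the topology of $\mathcal{F}(V)$ by a family $(q_j)_{j}$ and invoke, for each $j$, an estimate of the type $q_j(g\vert_{V}\,h)\le C_{j}\,q_{j'}(h)$ valid for $h\in\mathcal{F}(V)$ in a neighbourhood, so that $q_{j}\big(a_{\lambda}(r)(gu_{\lambda})\vert_{V}-g\vert_{V}f\big)\le C_{j}\,q_{j'}\big(a_{\lambda}(r)u_{\lambda}\vert_{V}-f\big)\to0$; but invoking continuity abstractly is cleaner and entirely sufficient.
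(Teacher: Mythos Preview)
Your proof is correct and follows exactly the approach the paper has in mind: the paper itself does not spell out a proof but simply remarks that ``the proofs are straightforward,'' and your argument---showing $N_{x}(u)\subset N_{x}(gu)$ by applying the continuous $\mathcal{E}$-module multiplication $h\mapsto g\vert_{V}\cdot h$ on $\mathcal{F}(V)$ to the convergent net, in direct analogy with Proposition~\ref{SPSSpSDerivee}---is precisely the intended filling-in.
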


Propositions \ref{SPSSpSsum}, \ref{SPSSpSDerivee} and \ref{SPSSpSMult} finally imply:

\begin{corollary}
\label{SPSCinfOPD}\textit{Let }$P(\partial)=%
{\displaystyle\sum\limits_{\left\vert \alpha\right\vert \leq m}}
C_{\alpha}\partial^{\alpha}$\textit{\ be a differential polynomial with
coefficients in }$\mathcal{E}(\Omega).$ \textit{For any }$u\in\mathcal{A}%
(\Omega)$\textit{, we have}%
\[
\mathcal{S}_{\mathcal{A}}^{\left(  a,\mathcal{F}\right)  }\left(
P(\partial)u\right)  \subset\mathcal{S}_{\mathcal{A}}^{\left(  a,\mathcal{F}%
\right)  }\left(  u\right)  \text{.}%
\]

\end{corollary}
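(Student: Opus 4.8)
The statement is that $\mathcal{S}_{\mathcal{A}}^{(a,\mathcal{F})}(P(\partial)u) \subset \mathcal{S}_{\mathcal{A}}^{(a,\mathcal{F})}(u)$ for a differential polynomial $P(\partial) = \sum_{|\alpha|\le m} C_\alpha \partial^\alpha$ with coefficients $C_\alpha \in \mathcal{E}(\Omega)$. The plan is simply to chain together the three preceding results exactly as the text announces. First I would note that by Proposition \ref{SPSSpSDerivee}, each $\partial^\alpha u$ satisfies $\mathcal{S}_{\mathcal{A}}^{(a,\mathcal{F})}(\partial^\alpha u) \subset \mathcal{S}_{\mathcal{A}}^{(a,\mathcal{F})}(u)$. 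Next, since each coefficient $C_\alpha$ lies in $\mathcal{E}(\Omega)$, Proposition \ref{SPSSpSMult} gives $\mathcal{S}_{\mathcal{A}}^{(a,\mathcal{F})}(C_\alpha \partial^\alpha u) \subset \mathcal{S}_{\mathcal{A}}^{(a,\mathcal{F})}(\partial^\alpha u) \subset \mathcal{S}_{\mathcal{A}}^{(a,\mathcal{F})}(u)$. Finally, writing $P(\partial)u = \sum_{|\alpha|\le m} C_\alpha \partial^\alpha u$ as a finite sum, I would apply the iterated version of Proposition \ref{SPSSpSsum} — i.e.\ the finite-sum statement obtained by induction from that proposition, analogous to Corollary \ref{SPStCorSingSupp} in the non-parametric setting — to conclude
\[
\mathcal{S}_{\mathcal{A}}^{(a,\mathcal{F})}(P(\partial)u) \subset \bigcup_{|\alpha|\le m} \mathcal{S}_{\mathcal{A}}^{(a,\mathcal{F})}(C_\alpha \partial^\alpha u) \subset \mathcal{S}_{\mathcal{A}}^{(a,\mathcal{F})}(u).
\]

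There is really no obstacle here: the corollary is a pure bookkeeping consequence of the three propositions, and the only thing to be slightly careful about is that the hypotheses accumulate — one needs $\mathcal{F}$ to be simultaneously a sheaf of topological differential vector spaces (resp.\ algebras) with continuous differentiation admitting $\mathcal{E}$ as a differential subsheaf (for Proposition \ref{SPSSpSDerivee}) and a sheaf of topological $\mathcal{E}$-modules (for Proposition \ref{SPSSpSMult}). Both of these are exactly the standing assumptions in force for those two propositions, so they are available. The induction needed to pass from the two-summand version of Proposition \ref{SPSSpSsum} to the $\binom{m+d}{d}$-summand version is immediate: if the inclusion holds for $k$ summands, apply Proposition \ref{SPSSpSsum} to the pair (sum of first $k$, the $(k+1)$st) and use the induction hypothesis on the first block. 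I would state this inductive step in one sentence rather than spelling it out.

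The particular case $u_j = u$ is not needed here, but the same chain shows that any application of a fixed differential operator cannot enlarge the $(a,\mathcal{F})$-singular spectrum, which is the qualitative content worth emphasizing: singularities, measured in this asymptotic-spectral sense, propagate only "forward" under linear differential operators with smooth (i.e.\ $\mathcal{E}$) coefficients, in complete parallel with the classical fact for the $C^\infty$-wavefront set. I would close the proof there.
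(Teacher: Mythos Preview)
Your proposal is correct and follows exactly the approach the paper indicates: the paper simply states that the corollary is implied by Propositions \ref{SPSSpSsum}, \ref{SPSSpSDerivee} and \ref{SPSSpSMult}, and you have spelled out precisely how those three results chain together. There is nothing to add.
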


\subsubsection{Nonlinear properties}

\begin{theorem}
\label{SPSAFSASProd}For given $u$ and $v\in\mathcal{A}(\Omega)$, let $D_{i}$
($i=1,2,3$) be the following disjoint sets:%
\[
D_{1}=\mathcal{S}_{\mathcal{A}}^{\mathcal{F}}(u)\diagdown(\mathcal{S}%
_{\mathcal{A}}^{\mathcal{F}}(u)\cap\mathcal{S}_{\mathcal{A}}^{\mathcal{F}%
}(v))\ ;\ \ D_{2}=\mathcal{S}_{\mathcal{A}}^{\mathcal{F}}(v)\diagdown
(\mathcal{S}_{\mathcal{A}}^{\mathcal{F}}(u)\cap\mathcal{S}_{\mathcal{A}%
}^{\mathcal{F}}(v))\ ;\ \ D_{3}=\mathcal{S}_{\mathcal{A}}^{\mathcal{F}}%
(u)\cap\mathcal{S}_{\mathcal{A}}^{\mathcal{F}}(v).
\]
Then the ($a$,$\mathcal{F})$-singular asymptotic spectrum of $uv$ verifies%
\[
\mathcal{S}_{\mathcal{A}}^{\left(  a,\mathcal{F}\right)  }\left(  uv\right)
\subset\left\{  (x,\Sigma_{x}(u)),x\in D_{1}\right\}  \cup\left\{
(x,\Sigma_{x}(v)),x\in D_{2}\right\}  \cup\left\{  (x,E_{x}(u,v)),x\in
D_{3}\right\}
\]
where (for any $x\in D_{3}$)%
\[
E_{x}(u,v)=\left\{
\begin{array}
[c]{l}%
\lbrack0,\sup\Sigma_{x}(u)+\sup\Sigma_{x}(v)]\text{ if }\Sigma_{x}%
(u)\neq\mathbb{R}_{+}\text{ and }\Sigma_{x}(v)\neq\mathbb{R}_{+}\\
\mathbb{R}_{+}\text{ if }\Sigma_{x}(u)=\mathbb{R}_{+}\text{ or }\Sigma
_{x}(v)=\mathbb{R}_{+}%
\end{array}
\right.
\]

\end{theorem}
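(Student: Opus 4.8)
The plan is to reduce the statement about the singular spectrum of $uv$ to a pointwise statement about the sets $N_x$, exactly as in the proofs of Propositions \ref{SPStheoSingSupp} and \ref{SPSSpSsum}: it suffices to show that for each $x\in\Omega$, the set $N_x(uv)$ contains an appropriate set $M_x$ depending on which of the $D_i$ the point $x$ lies in, and then pass to complements in $\mathbb{R}_+$. So I first fix $x\in\Omega$ and a representative $(u_\lambda)_\lambda\in u$, $(v_\lambda)_\lambda\in v$, and treat the three cases according to whether $x\notin\mathcal{S}_{\mathcal{A}}^{\mathcal{F}}(u)\cup\mathcal{S}_{\mathcal{A}}^{\mathcal{F}}(v)$, $x\in D_1$ (or symmetrically $D_2$), or $x\in D_3$.

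\medskip

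\textbf{The easy cases.} If $x\notin\mathcal{S}_{\mathcal{A}}^{\mathcal{F}}(u)\cup\mathcal{S}_{\mathcal{A}}^{\mathcal{F}}(v)$, then by Remark \ref{SPSRemSPS} both $N_x(u)=N_x(v)=\mathbb{R}_+$, and the argument of Proposition \ref{SPStheoSingSupp} (the $\mathcal{F}$-algebra structure on $\mathcal{F}_{\mathcal{A}}$) gives $x\notin\mathcal{S}_{\mathcal{A}}^{\mathcal{F}}(uv)$, so nothing needs to be added to the spectrum of $uv$ over such $x$. If $x\in D_1$, then $x\notin\mathcal{S}_{\mathcal{A}}^{\mathcal{F}}(v)$, so $N_x(v)=\mathbb{R}_+$; I take $r\in N_x(u)$, pick $V$ with $\lim_{\Lambda}{}_{\mathcal{F}(V)}(a_\lambda(r)u_\lambda|_V)=f\in\mathcal{F}(V)$ and $W$ with $\lim_{\Lambda}{}_{\mathcal{F}(W)}(v_\lambda|_W)=g\in\mathcal{F}(W)$; since $\mathcal{F}$ is a presheaf of topological modules over $\mathcal{E}$ and $u_\lambda,v_\lambda\in\mathcal{E}$, the product $a_\lambda(r)u_\lambda v_\lambda|_{V\cap W}=(a_\lambda(r)u_\lambda|_{V\cap W})\cdot(v_\lambda|_{V\cap W})$ converges to $fg\in\mathcal{F}(V\cap W)$ (here I use joint continuity of the module multiplication, or at least separate continuity plus boundedness of one factor, which is why $g$ exists). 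Hence $N_x(u)\subset N_x(uv)$, i.e. $\Sigma_x(uv)\subset\Sigma_x(u)$, which is the claimed contribution over $D_1$; $D_2$ is symmetric.

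\medskip

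\textbf{The main case $D_3$.} This is the heart of the argument. Here $x\in\mathcal{S}_{\mathcal{A}}^{\mathcal{F}}(u)\cap\mathcal{S}_{\mathcal{A}}^{\mathcal{F}}(v)$, so $0\notin N_x(u)$ and $0\notin N_x(v)$. I want to show $N_x(uv)\supset(\,\sup\Sigma_x(u)+\sup\Sigma_x(v),+\infty)$ when both $\Sigma_x(u),\Sigma_x(v)\neq\mathbb{R}_+$ (the $\mathbb{R}_+$ case forces no information, i.e. $E_x=\mathbb{R}_+$, and there is nothing to prove). Let $R_u=\sup\Sigma_x(u)=R_x(u)=\inf N_x(u)$ and similarly $R_v$; note $N_x(u)\supset(R_u,+\infty)$ and $N_x(v)\supset(R_v,+\infty)$ by Theorem \ref{SPSThmNx}(i). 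Fix $t>R_u+R_v$ and choose $r>R_u$, $s>R_v$ with $r+s=t$ (possible since $t-R_u>R_v$). Then $r\in N_x(u)$, $s\in N_x(v)$: pick $V$ with $\lim_{\Lambda}{}_{\mathcal{F}(V)}(a_\lambda(r)u_\lambda|_V)=f\in\mathcal{F}(V)$ and $W$ with $\lim_{\Lambda}{}_{\mathcal{F}(W)}(a_\lambda(s)v_\lambda|_W)=g\in\mathcal{F}(W)$. Shrinking to $U=V\cap W$, using the submultiplicativity $a_\lambda(t)=a_\lambda(r+s)\le a_\lambda(r)a_\lambda(s)$ from hypothesis $(a)$, I estimate, for each continuous seminorm $q_j$ on $\mathcal{F}(U)$ and using that $\mathcal{F}(U)$ is a topological $\mathcal{E}(U)$-module,
\[
q_j\big(a_\lambda(t)\,u_\lambda v_\lambda|_U\big)\ \le\ q_j\big((a_\lambda(r)u_\lambda|_U)\cdot(a_\lambda(s)v_\lambda|_U)\big),
\]
and the right-hand product converges in $\mathcal{F}(U)$ to $fg$ because $a_\lambda(r)u_\lambda|_U\to f$ and $a_\lambda(s)v_\lambda|_U\to g$ and module multiplication is continuous. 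Hence $\lim_{\Lambda}{}_{\mathcal{F}(U)}(a_\lambda(t)u_\lambda v_\lambda|_U)=fg\in\mathcal{F}(U)$, so $t\in N_x(uv)$. This gives $(R_u+R_v,+\infty)\subset N_x(uv)$, i.e. $\Sigma_x(uv)\subset[0,R_u+R_v]=E_x(u,v)$, as claimed.

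\medskip

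\textbf{Expected obstacle.} The delicate point is the topological one: passing from "$a_\lambda(r)u_\lambda\to f$ and $a_\lambda(s)v_\lambda\to g$" to "$a_\lambda(r)a_\lambda(s)u_\lambda v_\lambda\to fg$" requires continuity of the multiplication $\mathcal{F}(U)\times\mathcal{F}(U)\to\mathcal{F}(U)$ (or at least of the module action $\mathcal{E}(U)\times\mathcal{F}(U)\to\mathcal{F}(U)$ together with the fact that $u_\lambda,v_\lambda$ lie in $\mathcal{E}$, not merely in $\mathcal{F}$). In the motivating case $\mathcal{F}=\mathrm{C}^p$ this is the Leibniz/product estimate on $\mathrm{C}^p$ and is elementary, but in the general statement one must either have assumed $\mathcal{F}$ to be a presheaf of topological algebras with (jointly) continuous product, or invoke that one factor converges in $\mathcal{E}$ and use the module continuity; the clean write-up should make explicit which of these hypotheses is being used and restrict the seminorm estimate accordingly. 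The bookkeeping of intersecting neighborhoods and passing to complements is routine and mirrors Proposition \ref{SPSSpSsum}.
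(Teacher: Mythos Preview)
Your overall strategy and the treatment of $D_1$, $D_2$ are correct and essentially match the paper. There is, however, a genuine gap in your $D_3$ argument.

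From the seminorm inequality
\[
q_j\big(a_\lambda(t)\,u_\lambda v_\lambda|_U\big)\ \le\ q_j\big((a_\lambda(r)u_\lambda|_U)\cdot(a_\lambda(s)v_\lambda|_U)\big)
\]
and the convergence of the right-hand side to $q_j(fg)$ you conclude that $a_\lambda(t)u_\lambda v_\lambda|_U\to fg$. This does not follow: bounding a seminorm by a net converging to a (possibly nonzero) constant yields only boundedness, not convergence. Concretely, hypothesis $(a)$ gives only $a_\lambda(r+s)\le a_\lambda(r)a_\lambda(s)$, so the ratio $c_\lambda=a_\lambda(t)/\big(a_\lambda(r)a_\lambda(s)\big)\in[0,1]$ need not converge; then $a_\lambda(t)u_\lambda v_\lambda=c_\lambda\cdot(a_\lambda(r)u_\lambda)(a_\lambda(s)v_\lambda)$ may oscillate between $0$ and $fg$ whenever $fg\neq 0$.

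The paper closes this gap by first forcing $f=0$ and $g=0$. Since $r>R_u=\inf N_x(u)$ strictly, one can choose $r'\in N_x(u)$ with $R_u<r'<r$ and invoke Theorem~\ref{SPSThmNx}(i) to obtain a neighbourhood on which $\lim_\Lambda a_\lambda(r)u_\lambda=0$; similarly for $v$. With $f=g=0$ your seminorm inequality then gives $q_j(a_\lambda(t)u_\lambda v_\lambda)\to 0$ for every $j$, hence genuine convergence $a_\lambda(t)u_\lambda v_\lambda|_U\to 0$ and $t\in N_x(uv)$. Once you insert this observation, your argument coincides with the paper's; your ``expected obstacle'' about continuity of the product is handled by the standing assumption that $\mathcal{F}$ is a presheaf of topological algebras.
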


\begin{proof}
Suppose that $x$ belongs to $D_{1}$. Then $x$ is not in $\mathcal{S}%
_{\mathcal{A}}^{\mathcal{F}}(v)$ and we have%
\[
\Sigma_{x}(v)=\varnothing\text{, }N_{x}(v)=\mathbb{R}_{+}.
\]
If $N_{x}(u)$ is not empty, let $r$ be in $N_{x}(u)$. As $N_{x}(v)=\mathbb{R}%
_{+}$, we have $r\in N_{x}(v)$. Thus there exists $V\in\mathcal{V}_{x}$ (resp.
$W\in\mathcal{V}_{x}$) such that $\left[  a_{\lambda}(r)u_{\lambda}\left\vert
_{V}\right.  \right]  \in\mathcal{F}_{\mathcal{A}}(V)$ (resp. $\left[
a_{\lambda}(r)v_{\lambda}\left\vert _{W}\right.  \right]  \in\mathcal{F}%
_{\mathcal{A}}(W)$). As $\mathcal{F}$ is a sheaf of topological algebras we
have%
\[
\left[  a_{\lambda}(r)\,\left(  u_{\lambda}v_{\lambda}\right)  \left\vert
_{V\cap W}\right.  \right]  \in\mathcal{F}_{\mathcal{A}}(V\cap W).
\]
Thus, $r$ belongs to $N_{x}(uv)$. Therefore, we have proved that $\Sigma
_{x}(uv)\subset\Sigma_{x}(u)$. If $N_{x}(u)$ is empty, we have $\Sigma
_{x}(u)=\mathbb{R}_{+}$ and the above inclusion is obviously fulfilled. For
$x$ in $D_{2}$, the same proof gives $\Sigma_{x}(uv)\subset\Sigma
_{x}(v).\smallskip$

\noindent Consider $x$ in $D_{3}$.\ Then, $\Sigma_{x}(u)$ and $\Sigma_{x}(v)$
are not empty. We suppose first that both of them are not equal to
$\mathbb{R}_{+}$. Set $R=\sup\Sigma_{x}(u)$ and $S=\sup\Sigma_{x}(v)$. If
$r>R$, there exists $r^{\prime}\in N_{x}(u)$ such that $R<r^{\prime}<r$ and
then, from the part $(i)$ of Theorem \ref{SPSThmNx}, there exists
$V\in\mathcal{V}_{x}$ such that
\[
\lim\limits_{\Lambda}\left.  _{\mathcal{F}(V)}\right.  \left(  a_{\lambda
}(r)\,u_{\lambda}\left\vert _{V}\right.  \right)  =0.
\]
Similarly, if $s>S$, there exists $W\in\mathcal{V}_{x}$ such that%
\[
\lim\limits_{\Lambda}\left.  _{\mathcal{F}(W)}\right.  \left(  a_{\lambda
}(s)\,v_{\lambda}\left\vert _{W}\right.  \right)  =0.
\]
Then $\lim\limits_{\Lambda}\left.  _{\mathcal{F}(V\cap W)}\right.  \left(
a_{\lambda}(r)a_{\lambda}(s)\,\left(  u_{\lambda}v_{\lambda}\right)
{}\left\vert _{V\cap W}\right.  \right)  =0$. By expressing this limit in
terms of semi-norms, as in the proof of Theorem \ref{SPSThmNx} and by using
the inequality $a_{\lambda}(r+s)\leq a_{\lambda}(r)a_{\lambda}(s)$, we get
that $\lim\limits_{\Lambda}\left.  _{\mathcal{F}(V\cap W)}\right.  \left(
a_{\lambda}(r+s)\,\left(  u_{\lambda}v_{\lambda}\right)  \left\vert _{V\cap
W}\right.  \right)  =0$.\ Thus
\[
\left[  r+s,\infty\right[  \subset N_{x}(uv)\text{\ or }\left[  0,r+s\right[
\supset\Sigma_{x}(uv)
\]
for any $r>R$ and $s>S$. Thus%
\[
\Sigma_{x}(uv)\subset\left[  0,R+S\right]  =[0,\sup\Sigma_{x}(u)+\sup
\Sigma_{x}(v)].
\]
If $\Sigma_{x}(u)$ or $\Sigma_{x}(v)$ is equal to $\mathbb{R}_{+}$, the
obvious inclusion $\Sigma_{x}(uv)\subset\mathbb{R}_{+}$ gives the last result.
\end{proof}

\begin{corollary}
\label{SPSAFSASPow}For given $u$ $\in\mathcal{A}(\Omega)$ and $p\in
\mathbb{N}^{\ast}$, we have%
\[
\mathcal{S}_{\mathcal{A}}^{\left(  a,\mathcal{F}\right)  }\left(
u^{p}\right)  \subset\left\{  (x,H_{p,x}(u)),x\in\mathcal{S}_{\mathcal{A}%
}^{\mathcal{F}}(u)\right\}  .
\]
where\ $H_{p,x}(u)=\left\{
\begin{array}
[c]{l}%
\lbrack0,p\sup\Sigma_{x}(u)]\text{ if }\Sigma_{x}(u)\neq\mathbb{R}_{+}\\
\mathbb{R}_{+}\text{ if }\Sigma_{x}(u)=\mathbb{R}_{+}%
\end{array}
\right.  $
\end{corollary}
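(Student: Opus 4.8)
The plan is to derive Corollary \ref{SPSAFSASPow} by iterating Theorem \ref{SPSAFSASProd}, taking $u=v$ at each stage, and tracking the resulting bounds by an induction on $p$. The base case $p=1$ is the trivial identity $\mathcal{S}_{\mathcal{A}}^{(a,\mathcal{F})}(u)=\mathcal{S}_{\mathcal{A}}^{(a,\mathcal{F})}(u)$. For the inductive step, I would write $u^{p+1}=u\cdot u^{p}$ and apply Theorem \ref{SPSAFSASProd} with the factors $u$ and $v:=u^{p}$. The first observation needed is that $\mathcal{S}_{\mathcal{A}}^{\mathcal{F}}(u^{p})\subset\mathcal{S}_{\mathcal{A}}^{\mathcal{F}}(u)$ (this is exactly the last sentence of Corollary \ref{SPStCorSingSupp}), so the sets $D_{1},D_{2},D_{3}$ attached to the pair $(u,u^{p})$ satisfy $D_{1}=\mathcal{S}_{\mathcal{A}}^{\mathcal{F}}(u)\setminus\mathcal{S}_{\mathcal{A}}^{\mathcal{F}}(u^{p})$, $D_{2}=\varnothing$, and $D_{3}=\mathcal{S}_{\mathcal{A}}^{\mathcal{F}}(u^{p})$. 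Hence the projection of $\mathcal{S}_{\mathcal{A}}^{(a,\mathcal{F})}(u^{p+1})$ onto $\Omega$ is contained in $\mathcal{S}_{\mathcal{A}}^{\mathcal{F}}(u)$, which matches the outer set in the statement.

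Next I would track the fibre bounds. Over $D_{1}$, Theorem \ref{SPSAFSASProd} gives the fibre $\Sigma_{x}(u)$, and since $x\notin\mathcal{S}_{\mathcal{A}}^{\mathcal{F}}(u^{p})$ we have (by the inductive hypothesis, interpreted fibrewise via Proposition \ref{SPSProjSPS} / Remark \ref{SPSRemSPS}) that $\Sigma_{x}(u^{p})=\varnothing$; in particular $\Sigma_{x}(u)\subset H_{p+1,x}(u)$ trivially because $\sup\Sigma_{x}(u)\le (p+1)\sup\Sigma_{x}(u)$ when $\Sigma_{x}(u)\ne\mathbb{R}_{+}$, and $H_{p+1,x}(u)=\mathbb{R}_{+}$ otherwise. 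Over $D_{3}=\mathcal{S}_{\mathcal{A}}^{\mathcal{F}}(u^{p})$, Theorem \ref{SPSAFSASProd} gives the fibre $E_{x}(u,u^{p})$. If $\Sigma_{x}(u)\ne\mathbb{R}_{+}$, then by the inductive hypothesis $\Sigma_{x}(u^{p})\subset[0,p\sup\Sigma_{x}(u)]$, so $\Sigma_{x}(u^{p})\ne\mathbb{R}_{+}$ and
\[
E_{x}(u,u^{p})=[0,\sup\Sigma_{x}(u)+\sup\Sigma_{x}(u^{p})]\subset[0,\sup\Sigma_{x}(u)+p\sup\Sigma_{x}(u)]=[0,(p+1)\sup\Sigma_{x}(u)],
\]
which is exactly $H_{p+1,x}(u)$. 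If $\Sigma_{x}(u)=\mathbb{R}_{+}$, then trivially $E_{x}(u,u^{p})\subset\mathbb{R}_{+}=H_{p+1,x}(u)$. Combining the two cases closes the induction.

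The step I expect to require the most care is the bookkeeping around the case distinction $\Sigma_{x}(u)=\mathbb{R}_{+}$ versus $\Sigma_{x}(u)\ne\mathbb{R}_{+}$, and in particular the need to know that $\Sigma_{x}(u^{p})\ne\mathbb{R}_{+}$ whenever $\Sigma_{x}(u)\ne\mathbb{R}_{+}$ so that the finite branch of $E_{x}$ applies — this is precisely what the inductive hypothesis supplies via the bound $\Sigma_{x}(u^{p})\subset[0,p\sup\Sigma_{x}(u)]$. The rest is routine: the set-inclusion algebra on $D_{1},D_{2},D_{3}$ and the arithmetic $\sup\Sigma_{x}(u)+p\sup\Sigma_{x}(u)=(p+1)\sup\Sigma_{x}(u)$. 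One should also note at the outset that whether $\sup\Sigma_{x}(u)$ refers to a half-open or closed interval is immaterial here, since $H_{p,x}(u)$ is defined as a closed interval $[0,p\sup\Sigma_{x}(u)]$ and the bounds coming from Theorem \ref{SPSAFSASProd} are already stated with closed right endpoints.
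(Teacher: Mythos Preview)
Your proposal is correct and follows essentially the same approach as the paper: induction on $p$, writing $u^{p+1}=u\cdot u^{p}$ and invoking Theorem~\ref{SPSAFSASProd} together with the inclusion $\mathcal{S}_{\mathcal{A}}^{\mathcal{F}}(u^{p})\subset\mathcal{S}_{\mathcal{A}}^{\mathcal{F}}(u)$ to identify $D_{1},D_{2},D_{3}$. Your write-up is in fact more careful than the paper's in spelling out why $\Sigma_{x}(u^{p})\neq\mathbb{R}_{+}$ whenever $\Sigma_{x}(u)\neq\mathbb{R}_{+}$, which the paper leaves implicit.
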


\begin{proof}
When $\Sigma_{x}(u)=\mathbb{R}_{+}$, the result is obvious. Suppose now
$\Sigma_{x}(u)\neq\mathbb{R}_{+}$. We shall prove the result by induction. If
$p=1$, the result is a simple consequence of the definitions. Suppose that the
result holds for some $p\geq1$. Set $v=u^{p}$ in the previous theorem. We have%
\[
D_{1}=\mathcal{S}_{\mathcal{A}}^{\mathcal{F}}(u)\diagdown\mathcal{S}%
_{\mathcal{A}}^{\mathcal{F}}(u^{p})\ ;\ \ D_{2}=\varnothing\ ;\ \ D_{3}%
=\mathcal{S}_{\mathcal{A}}^{\mathcal{F}}(u^{p}).
\]
Thus%
\[
\mathcal{S}_{\mathcal{A}}^{\left(  a,\mathcal{F}\right)  }\left(
u^{p+1}\right)  \subset\left\{  (x,\Sigma_{x}(u)),x\in\mathcal{S}%
_{\mathcal{A}}^{\mathcal{F}}(u)\diagdown\mathcal{S}_{\mathcal{A}}%
^{\mathcal{F}}(u^{p})\right\}  \cup\left\{  (x,[0,(p+1)\sup\Sigma
_{x}(u)]),x\in\mathcal{S}_{\mathcal{A}}^{\mathcal{F}}(u^{p})\right\}  ,
\]
by using the induction hypothesis. It follows a fortiori that
\[
\mathcal{S}_{\mathcal{A}}^{\left(  a,\mathcal{F}\right)  }\left(
u^{p+1}\right)  \subset\left\{  (x,[0,(p+1)\sup\Sigma_{x}(u)]),x\in
\mathcal{S}_{\mathcal{A}}^{\mathcal{F}}(u)\right\}  .
\]

\end{proof}


\section{Applications to partial differential equations\label{SPSPDE}}

\label{Sec:PDEs}

In this section we shall compute various $\left(  a,\mathcal{F}\right)
$\emph{-}singular spectra of solutions to linear and nonlinear partial
differential equations. Throughout we shall suppose that $\Lambda= ]0,1]$,
$X=\mathbb{R}^{d}$, $\mathcal{E}=\mathrm{C}^{\infty}$, $\mathcal{F}%
=\mathrm{C}^{p}$ ($1 \leq p \leq\infty$) or $\mathcal{F}={\mathcal{D}}%
^{\prime}$, $a_{\varepsilon}(r) =\varepsilon^{r}$. The results will hold for
any $(\mathcal{C},\mathcal{E},\mathcal{P})$\emph{-}algebra
\[
\mathcal{A}=\mathcal{H}_{(A,\mathcal{E},\mathcal{P})}/\mathcal{J}%
_{(I_{A},\mathcal{E},\mathcal{P})}
\]
such that $\left(  a_{\varepsilon}(r)\right)  _{\varepsilon}\in A_{+}$ for all
$r\in\mathbb{R}_{+}$ and property (\ref{SPSHypoth1}) holds.

\begin{example}
\label{Ex:deltam} The $(a, \mathrm{C}^{p})$-singular spectrum of powers of the
delta function. Given a mollifier of the form
\[
\varphi_{\varepsilon}\left(  x\right)  =\dfrac{1}{\varepsilon^{d}}%
\varphi\left(  \dfrac{x}{\varepsilon}\right)  ,\ x\in\mathbb{R}^{d}\text{
\ where }\varphi\in\mathcal{D}(\mathbb{R}^{d}), \varphi\geq0 \text{
and\ }{\textstyle\int}\varphi\left(  x\right)  dx=1,
\]
its class in ${\mathcal{A}}(\mathbb{R}^{d})$ defines the delta function
$\delta(x)$ as an element of ${\mathcal{A}}(\mathbb{R}^{d})$. Its powers are
given by ($m\in\mathbb{N}$)
\[
\delta^{m} = \big[\varphi_{\varepsilon}^{m}\big] = \Big[\dfrac{1}%
{\varepsilon^{md}}\;\varphi^{m}\left(  \dfrac{.}{\varepsilon}\right)  \Big].
\]
Clearly, the $\mathrm{C}^{0}$-singular spectrum is given by
\[
{\mathcal{S}}_{{\mathcal{A}}}^{(a,\mathrm{C}^{0})}(\delta^{m}) =
\big(0,[0,md]\big).
\]
Differentiating $\varphi^{m}(x)$ and observing that for each derivative there
is a point $x$ at which it does not vanish we see that
\[
{\mathcal{S}}_{{\mathcal{A}}}^{(a,\mathrm{C}^{k})}(\delta^{m}) =
\big(0,[0,md+k]\big).
\]

\end{example}

\begin{example}
The $(a, {\mathcal{D}}^{\prime})$-singular spectrum of powers of the delta
function. Given a test function $\psi\in{\mathcal{D}}(\mathbb{R}^{d})$, we
have
\[
\int\varphi_{\varepsilon}^{m}(x)\psi(x)\, dx = \int\dfrac{1}{\varepsilon
^{md-d}}\,\varphi^{m}(x)\psi(\varepsilon x)\, dx,
\]
thus
\[
{\mathcal{S}}_{{\mathcal{A}}}^{(a,{\mathcal{D}}^{\prime})}(\delta^{m}) =
\varnothing\ \ \mathrm{for}\ m = 1,\qquad{\mathcal{S}}_{{\mathcal{A}}%
}^{(a,{\mathcal{D}}^{\prime})}(\delta^{m}) =
\big(0,[0,md-d[\big)\ \ \mathrm{for}\ m > 1.
\]

\end{example}


\subsection{The singular spectrum of solutions to linear hyperbolic equations}

\label{Subsec:linearhyperbolic}

Consider the Cauchy problem for the $d$-dimensional linear wave equation
\begin{equation}
\label{eq:linearwave}%
\begin{array}
[c]{l}%
\partial_{t}^{2} u_{\varepsilon}(x,t) - \Delta u_{\varepsilon}(x,t) = 0,\quad
x\in\mathbb{R}^{d},\ t \in\mathbb{R}\\
u_{\varepsilon}(x,0) = u_{0\varepsilon}(x),\ \partial_{t}u_{\varepsilon}(x,0)
= u_{1\varepsilon(x)},\quad x\in\mathbb{R}^{d},
\end{array}
\end{equation}
where $u_{0\varepsilon}, u_{1\varepsilon}\in\mathrm{C}^{\infty}(\mathbb{R}%
^{d})$ represent elements $u_{0}, u_{1}$ of an algebra ${\mathcal{A}%
}(\mathbb{R}^{d})$ as outlined at the beginning of this section. Under
suitable assumptions on the ring $A$, the corresponding net of classical
smooth solutions represents a unique solution $u$ in the algebra
${\mathcal{A}}(\mathbb{R}^{d+1})$; for example, this holds in the Colombeau
case \cite{Ober1}. Let $t \to E(t) \in\mathrm{C}^{\infty}(\mathbb{R}%
:{\mathcal{E}}^{\prime}(\mathbb{R}^{d}))$ be the fundamental solution of the
Cauchy problem. Then
\[
u_{\varepsilon}(\cdot,t) = \dfrac{d}{dt}E(t) \ast\varepsilon^{r}
u_{0\varepsilon} + E(t)\ast\varepsilon^{r} u_{1\varepsilon}.
\]
If for some $r\geq0$ and $u_{0}\in{\mathcal{D}}^{\prime}(\mathbb{R}^{d})$,
\[
\int\varepsilon^{r} u_{0\varepsilon}(x)\psi(x)\,dx \to\langle u_{0}%
,\psi\rangle
\]
for all $\psi\in{\mathcal{D}}(\mathbb{R}^{d})$, then
\[
\int\varepsilon^{r} u_{\varepsilon}(x)\psi(x)\,dx = \langle E(t)\ast
\varepsilon^{r} u_{0\varepsilon},\psi\rangle= \langle\varepsilon^{r}
u_{0\varepsilon},\check{E}(t)\ast\psi\rangle\to\langle u_{0},\check{E}%
(t)\ast\psi\rangle
\]
for all $\psi\in{\mathcal{D}}(\mathbb{R}^{d})$ and $t\in\mathbb{R}$ as well.
We arrive at the following assertion.

\begin{proposition}
Assume that ${\mathcal{S}}_{{\mathcal{A}}}^{(a,{\mathcal{D}}^{\prime})}%
(u_{0})$ and ${\mathcal{S}}_{{\mathcal{A}}}^{(a,{\mathcal{D}}^{\prime})}%
(u_{1})$ are contained in $\mathbb{R}^{d}\times I$, where $I = \varnothing$,
$I = [0,r[$ or $I = [0,r]$ for some $r$, $0\leq r \leq\infty$. Let
$u\in{\mathcal{A}}(\mathbb{R}^{d+1})$ be the solution to the linear wave
equation (\ref{eq:linearwave}). Then ${\mathcal{S}}_{{\mathcal{A}}%
}^{(a,{\mathcal{D}}^{\prime})}(u(\cdot,t)) \subset\mathbb{R}^{d}\times I$ for
all $t\in\mathbb{R}$.
\end{proposition}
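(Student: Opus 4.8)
The plan is to reduce the claim to the concrete convergence of the scaled nets $(\varepsilon^{s}u_{0\varepsilon})_{\varepsilon}$ and $(\varepsilon^{s}u_{1\varepsilon})_{\varepsilon}$, to propagate it through the classical fundamental-solution representation of the wave equation, and to read the outcome back off as a statement about the sets $N_{x}$ (here $s$ denotes the running parameter of the spectrum, to be distinguished from the endpoint $r$ of $I$). First I would reformulate the hypothesis. By Definition~\ref{SPSDefSingSp} and the identity $\Sigma_{x}=\mathbb{R}_{+}\setminus N_{x}$, the inclusion $\mathcal{S}_{\mathcal{A}}^{(a,\mathcal{D}^{\prime})}(u_{j})\subset\mathbb{R}^{d}\times I$ (for $j=0,1$) means exactly that $\mathbb{R}_{+}\setminus I\subset N_{x}(u_{j})$ for every $x\in\mathbb{R}^{d}$, a formulation that covers the three possible shapes of $I$ uniformly (and makes the case $r=\infty$, where $\mathbb{R}_{+}\setminus I=\varnothing$, vacuous). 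Fix $s\in\mathbb{R}_{+}\setminus I$. Then for each $x\in\mathbb{R}^{d}$ there are an open neighbourhood $V_{x}$ of $x$ and a distribution $f_{x}\in\mathcal{D}^{\prime}(V_{x})$ with $\varepsilon^{s}u_{j\varepsilon}|_{V_{x}}\to f_{x}$ in $\mathcal{D}^{\prime}(V_{x})$, the choice of representative being irrelevant by~(\ref{SPSHypoth1}). Since the family $\{V_{x}\}_{x\in\mathbb{R}^{d}}$ covers $\mathbb{R}^{d}$, the distributions $f_{x}$ agree on overlaps (uniqueness of distributional limits) and hence glue to an $f_{j}^{(s)}\in\mathcal{D}^{\prime}(\mathbb{R}^{d})$; splitting an arbitrary $\psi\in\mathcal{D}(\mathbb{R}^{d})$ by a partition of unity subordinate to a finite subcover then yields $\varepsilon^{s}u_{j\varepsilon}\to f_{j}^{(s)}$ in $\mathcal{D}^{\prime}(\mathbb{R}^{d})$, for $j=0,1$.

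Next I would bring in the representation of the solution. Writing $u=[u_{\varepsilon}]$, with $u_{\varepsilon}$ the classical $\mathrm{C}^{\infty}$-solution of~(\ref{eq:linearwave}) with data $u_{0\varepsilon},u_{1\varepsilon}$, and letting $E$ be the fundamental solution, $t\mapsto E(t)\in\mathrm{C}^{\infty}(\mathbb{R};\mathcal{E}^{\prime}(\mathbb{R}^{d}))$ with $\mathrm{supp}\,E(t)\subset\overline{B(0,|t|)}$, one has
\[
u_{\varepsilon}(\cdot,t)=\tfrac{d}{dt}E(t)\ast u_{0\varepsilon}+E(t)\ast u_{1\varepsilon},\qquad\text{hence}\qquad\varepsilon^{s}u_{\varepsilon}(\cdot,t)=\tfrac{d}{dt}E(t)\ast(\varepsilon^{s}u_{0\varepsilon})+E(t)\ast(\varepsilon^{s}u_{1\varepsilon})
\]
by linearity. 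For fixed $t$, convolution with the compactly supported distributions $\tfrac{d}{dt}E(t)$ and $E(t)$ is continuous on $\mathcal{D}^{\prime}(\mathbb{R}^{d})$ (equivalently, pairing with $\psi\in\mathcal{D}(\mathbb{R}^{d})$ and transferring the convolution onto $\psi$ produces again a test function in $\mathcal{D}(\mathbb{R}^{d})$, as in the computation preceding the statement). Together with the first step this gives, for every $s\in\mathbb{R}_{+}\setminus I$ and every $t\in\mathbb{R}$,
\[
\varepsilon^{s}u_{\varepsilon}(\cdot,t)\ \longrightarrow\ \tfrac{d}{dt}E(t)\ast f_{0}^{(s)}+E(t)\ast f_{1}^{(s)}\quad\text{in }\mathcal{D}^{\prime}(\mathbb{R}^{d}).
\]

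Finally, for each $t$ the net $(u_{\varepsilon}(\cdot,t))_{\varepsilon}$ represents the element $u(\cdot,t)\in\mathcal{A}(\mathbb{R}^{d})$ (well-definedness being routine, since restricting a net of $\mathcal{J}_{(I_{A},\mathcal{E},\mathcal{P})}(\mathbb{R}^{d+1})$ to the time slice keeps it in $\mathcal{J}_{(I_{A},\mathcal{E},\mathcal{P})}(\mathbb{R}^{d})$), and the convergence just displayed says precisely, taking $V=\mathbb{R}^{d}$, that $s\in N_{x}(u(\cdot,t))$ for every $x\in\mathbb{R}^{d}$. Hence $\mathbb{R}_{+}\setminus I\subset N_{x}(u(\cdot,t))$ for all $x$, i.e. $\Sigma_{x}(u(\cdot,t))\subset I$, which is exactly $\mathcal{S}_{\mathcal{A}}^{(a,\mathcal{D}^{\prime})}(u(\cdot,t))\subset\mathbb{R}^{d}\times I$. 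I expect the one point requiring care to be the globalization in the first step --- gluing the local $\mathcal{D}^{\prime}$-limits into a single distribution on all of $\mathbb{R}^{d}$, where the sheaf property of $\mathcal{D}^{\prime}$ and the independence of the representative enter; the rest is the classical weak continuity of convolution by a compactly supported distribution and bookkeeping over the three forms of $I$.
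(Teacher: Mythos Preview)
Your argument is correct and follows the same route as the paper's: represent the solution via the fundamental solution $E(t)$, use that convolution with a compactly supported distribution is weakly continuous on $\mathcal{D}'(\mathbb{R}^{d})$, and deduce that $\varepsilon^{s}u_{\varepsilon}(\cdot,t)$ converges whenever $\varepsilon^{s}u_{j\varepsilon}$ does. The paper's justification is essentially the one-line computation preceding the proposition; you carry out the same idea with more care.

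The one point where you go beyond the paper is the globalization step: the hypothesis on $\mathcal{S}_{\mathcal{A}}^{(a,\mathcal{D}')}(u_{j})$ gives, for each $s\notin I$, only \emph{local} $\mathcal{D}'$-convergence of $\varepsilon^{s}u_{j\varepsilon}$ near every point, and one must patch these local limits into a global $f_{j}^{(s)}\in\mathcal{D}'(\mathbb{R}^{d})$ before invoking the convolution formula. You handle this correctly via the sheaf property of $\mathcal{D}'$ and a partition of unity; the paper's discussion tacitly assumes global convergence from the outset. Your treatment is thus a faithful and slightly more complete version of the paper's argument.
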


This upper bound may or may not be reached, depending on the effects of finite
propagation speed or the Huyghens principle in odd space dimension $d \geq3$.
We just illustrate some of the possible effects for the one-dimensional wave
equation with powers of delta functions as initial data. Thus we consider the
problem
\begin{equation}
\label{eq:linearwave1d}%
\begin{array}
[c]{l}%
\partial_{t}^{2} u_{\varepsilon}(x,t) - \partial_{x}^{2} u_{\varepsilon}(x,t)
= 0,\quad x\in\mathbb{R},\ t \in\mathbb{R}\\
u_{\varepsilon}(x,0) = c_{0}\varphi_{\varepsilon}^{m}(x),\ \partial
_{t}u_{\varepsilon}(x,0) = c_{1}\varphi_{\varepsilon}^{n}(x),\quad
x\in\mathbb{R},
\end{array}
\end{equation}
where $\varphi$ is a mollifier as in Example~\ref{Ex:deltam} and $c_{0}, c_{1}
\in\mathbb{R}$. The solution to (\ref{eq:linearwave1d}) is given by
\[
u_{\varepsilon}(x,t) = \dfrac{c_{0}}{2}\big(\varphi_{\varepsilon}^{m}(x-t) +
\varphi_{\varepsilon}^{m}(x+t)\big) + \dfrac{c_{1}}{2}\int_{x-t}^{x+t}%
\varphi_{\varepsilon}^{n}(y)\,dy.
\]
We observe that $u_{\varepsilon}(x,t) = 0$ for sufficiently small
$\varepsilon$ when $|x| > |t|$, that is, outside the light cone, and
$u_{\varepsilon}(x,t) = \mathrm{sign}\vspace{0.5pt}(t)\frac{c_{1}}%
{2}\varepsilon^{n-1}\|\varphi^{n}\|_{\mathrm{L}^{1}(\mathbb{R})}$ for
sufficiently small $\varepsilon$ when $|x| < |t|$.

\begin{example}
If in equation~(\ref{eq:linearwave1d}) $c_{0} \neq0$, $c_{1} = 0$ then
\[
{\mathcal{S}}_{{\mathcal{A}}}^{(a,{\mathcal{D}}^{\prime})}(u) = \{(x,t,r): |x|
= |t|, 0\leq r < m-1\}
\]
with the provision that ${\mathcal{S}}_{{\mathcal{A}}}^{(a,{\mathcal{D}%
}^{\prime})}(u) = \varnothing$ when $m = 1$. If in
equation~(\ref{eq:linearwave1d}) $c_{0} = 0$, $c_{1} \neq0$ then
\[
{\mathcal{S}}_{{\mathcal{A}}}^{(a,{\mathcal{D}}^{\prime})}(u) = \{(x,t,r): |x|
\leq|t|, 0\leq r < n-1\}.
\]
When both $c_{0}$ and $c_{1}$ are nonzero the singular spectrum is obtained as
the union of the two spectra. For the $\mathrm{C}^{0}$-singular spectrum the
following results hold: If in equation~(\ref{eq:linearwave1d}) $c_{0} \neq0$,
$c_{1} = 0$ then
\[
{\mathcal{S}}_{{\mathcal{A}}}^{(a,\mathrm{C}^{0})}(u) = \{(x,t,r): |x| = |t|,
0\leq r \leq m\}.
\]
If $c_{0} = 0$, $c_{1} \neq0$ then
\[
{\mathcal{S}}_{{\mathcal{A}}}^{(a,\mathrm{C}^{0}))}(u) = \{(x,t,r): |x| < |t|,
0 \leq r < n-1\}\cup\{(x,t,r): |x| = |t|, 0\leq r \leq n-1\}.
\]

\end{example}


\subsection{The singular spectrum of solutions to semilinear hyperbolic
equations}

\label{Subsec:semilinearhyperbolic}

In this subsection we study the paradigmatic case of a semilinear transport
equation
\begin{equation}
\label{eq:transport}%
\begin{array}
[c]{l}%
\partial_{t} u_{\varepsilon}(x,t) + \lambda(x,t)\partial_{x} u_{\varepsilon
}(x,t) = F(u_{\varepsilon}(x,t)),\quad x\in\mathbb{R},\ t \in\mathbb{R}\\
u_{\varepsilon}(x,0) = u_{0\varepsilon}(x),\quad x\in\mathbb{R}%
\end{array}
\end{equation}
where $\lambda$ and $F$ are smooth functions of their arguments. In this
situation, the singular spectrum of the initial data may be decreased or
increased, depending on the function $F$. We observe that by a change of
coordinates we may assume without loss of generality that $\lambda\equiv0$. In
fact, denote by $s\to\gamma(x,t,s)$ the characteristic curve of
(\ref{eq:transport}) passing through the point $x$ at time $s = t$, that is
the solution to
\[
\dfrac{d}{ds}\gamma(x,t,s) = \lambda(\gamma(x,t,s),s),\qquad\gamma(x,t,t) =
x.
\]
The function $v(y,s) = u(\gamma(y,0,s),s)$ is a solution of the initial value
problem
\[
\partial_{s} v(y,s) = F(v(y,s),\qquad v(y,0) = u_{0}(y),
\]
at least as long as the characteristic curves exist.

\begin{example}
(The dissipative case) The equation
\[%
\begin{array}
[c]{l}%
\partial_{t} u_{\varepsilon}(x,t) = -u^{3}_{\varepsilon}(x,t), \quad
x\in\mathbb{R},\ t > 0\\
u_{\varepsilon}(x,0) = u_{0\varepsilon}(x), \quad x\in\mathbb{R}%
\end{array}
\]
has the solution
\[
u_{\varepsilon}(x,t) = \frac{u_{0\varepsilon}(x)}{\sqrt{2tu^{2}_{0\varepsilon
}(x) + 1}} = \frac{1}{\sqrt{2t + 1/u^{2}_{0\varepsilon}(x)}}.
\]
When the initial data are given by a power of the delta function,
$u_{0\varepsilon}(x) = \varphi_{\varepsilon}^{m}(x)$, the solution formula
shows that $u_{\varepsilon}(x,t)$ is a bounded function (uniformly in
$\varepsilon$) and supported on the line $\{x = 0\}$. Thus $u_{\varepsilon
}(x,t)$ converges to zero in ${\mathcal{D}}^{\prime}(\mathbb{R}\times
]0,\infty[)$, and so
\[
{\mathcal{S}}_{{\mathcal{A}}}^{(a,{\mathcal{D}}^{\prime})}(u_{0}) =
\big(0,[0,m-1[\big),\qquad{\mathcal{S}}_{{\mathcal{A}}}^{(a,{\mathcal{D}%
}^{\prime})}(u) = \varnothing.
\]

\end{example}

\begin{example}
The equation
\[%
\begin{array}
[c]{l}%
\partial_{t}u_{\varepsilon}(x,t)=\sqrt{1+u_{\varepsilon}^{2}(x,t)},\quad
x\in\mathbb{R},\ t>0\\
u_{\varepsilon}(x,0)=u_{0\varepsilon}(x),\quad x\in\mathbb{R}%
\end{array}
\]
has the solution
\[
u_{\varepsilon}(x,t)=u_{0\varepsilon}(x)\cosh t+\sqrt{1+u_{0\varepsilon}%
^{2}(x)}\,\sinh t.
\]
We first take a delta function as initial value, that is, $u_{0\varepsilon
}(x)=\varphi_{\varepsilon}(x)$. Then
\begin{align*}
\iint u_{\varepsilon}(x,t)\psi(x,t)\,dxdt  &  =\iint\Big(\varphi(x)\cosh
t+\sqrt{\varepsilon^{2}+\varphi^{2}(x)}\,\sinh t\Big)\psi(\varepsilon
x,t)\,dxdt\\
&  \rightarrow\iint\Big(\varphi(x)\cosh t+|\varphi(x)|\sinh t\Big)\psi
(0,t)\,dxdt
\end{align*}
for $\psi\in{\mathcal{D}}(\mathbb{R}^{2})$. Thus in this case
\[
{\mathcal{S}}_{{\mathcal{A}}}^{(a,{\mathcal{D}}^{\prime})}(u_{0}%
)={\mathcal{S}}_{{\mathcal{A}}}^{(a,{\mathcal{D}}^{\prime})}(u)=\varnothing.
\]
On the other hand, taking the derivative of a delta function as initial value,
$u_{0\varepsilon}(x)=\varphi_{\varepsilon}^{\prime}(x)$, a similar calculation
shows that
\[
\iint u_{\varepsilon}(x,t)\psi(x,t)\,dxdt=\iint\Big(\varphi(x)\cosh
t+\dfrac{1}{\varepsilon}\sqrt{\varepsilon^{4}+(\varphi^{\prime})^{2}%
(x)}\,\sinh t\Big)\psi(\varepsilon x,t)\,dxdt
\]
and so
\[
{\mathcal{S}}_{{\mathcal{A}}}^{(a,{\mathcal{D}}^{\prime})}(u_{0}%
)=\varnothing,\qquad{\mathcal{S}}_{{\mathcal{A}}}^{(a,{\mathcal{D}}^{\prime}%
)}(u)=\{(0,t,r):t>0,0\leq r<1\}.
\]

\end{example}

The next example shows that it is quite possible for the singular spectrum to
increase with time.

\begin{example}
The equation
\[%
\begin{array}
[c]{l}%
\partial_{t} u_{\varepsilon}(x,t) = \big(u_{\varepsilon}(x,t)+1)\big)\log
\big(u_{\varepsilon}(x,t)+1\big),\quad x\in\mathbb{R},\ t > 0\\
u_{\varepsilon}(x,0) = u_{0\varepsilon}(x),\quad x\in\mathbb{R}%
\end{array}
\]
has the solution
\[
u_{\varepsilon}(x,t) = \big(u_{0\varepsilon}(x)+1\big)^{e^{t}},
\]
provided $u_{0\varepsilon} > -1$ in which case the function on the right hand
side of the differential equation is smooth in the relevant region. To
demonstrate the effect, we take a power of the delta function as initial
value, that is $u_{0\varepsilon}(x) = \varphi^{m}_{\varepsilon}(x)$. Then
\[
{\mathcal{S}}_{{\mathcal{A}}}^{(a,{\mathcal{D}}^{\prime})}(u_{0}) = \{(0, r):
0 \leq r < m-1\}, \qquad{\mathcal{S}}_{{\mathcal{A}}}^{(a,{\mathcal{D}%
}^{\prime})}(u) = \{(0,t,r): t>0, 0 \leq r < me^{t}-1\}.
\]

\end{example}

In situations where blow-up in finite time occurs, microlocal asymptotic
methods allow to extract information beyond the point of blow-up. This can be
done by regularizing the initial data and truncating the nonlinear term. We
demonstrate this in a simple situation.

\begin{example}
Formally, we wish to treat the initial value problem
\[%
\begin{array}
[c]{l}%
\partial_{t}u(x,t)=u^{2}(x,t),\quad x\in\mathbb{R},\ t>0\\
u(x,0)=H(x),\quad x\in\mathbb{R}%
\end{array}
\]
where $H$ denotes the Heaviside function. Clearly, the local solution
$u(x,t)=H(x)/(1-t)$ blows up at time $t=1$ when $x>0$. Choose $\chi
_{\varepsilon}\in\mathrm{C}^{\infty}\left(  \mathbb{R}\right)  $ with%
\[
0\leq\chi_{\varepsilon}(z)\leq1\ ;\ \chi_{\varepsilon}(z)=1\text{ if }%
|z|\leq\varepsilon^{-s}\,,\ \chi_{\varepsilon}(z)=0\ \text{if }|z|\geq
1+\varepsilon^{-s}\,,\ s>0.
\]
Further, let $H_{\varepsilon}(x)=H\ast\varphi_{\varepsilon}(x)$ where
$\varphi_{\varepsilon}$ is a mollifier as in Example~\ref{Ex:deltam}. We
consider the regularized problem
\[%
\begin{array}
[c]{l}%
\partial_{t}u_{\varepsilon}(x,t)=\chi_{\varepsilon}\big(u_{\varepsilon
}(x,t)\big)u_{\varepsilon}^{2}(x,t),\quad x\in\mathbb{R},\ t>0\\
u_{\varepsilon}(x,0)=H_{\varepsilon}(x),\quad x\in\mathbb{R}.
\end{array}
\]
When $x<0$ and $\varepsilon$ is sufficiently small, $u_{\varepsilon}(x,t)=0$
for all $t\geq0$. For $x>0$, $u_{\varepsilon}(x,t)=1/(1-t)$ as long as
$t\leq1-\varepsilon^{s}$. The cut-off function is chosen in such a way that
$|\chi_{\varepsilon}(z)z^{2}|\leq(1+\varepsilon^{-s})^{2}$ for all
$z\in\mathbb{R}$. Therefore,
\[
\partial_{t}u_{\varepsilon}\leq(1+\varepsilon^{-s})^{2}%
\ \mbox{always\ and}\ \partial_{t}u_{\varepsilon}%
=0\ \mbox{when}\ |u_{\varepsilon}|\geq1+\varepsilon^{-s}.
\]
Continuing the regularized solution beyond time $t=1-\varepsilon^{s}$, we
infer by combining the two inequalities that $\varepsilon^{-s}\leq
u_{\varepsilon}(x,t)\leq1+\varepsilon^{-s}$ for $t\geq1-\varepsilon^{s}$ when
$x>0$ and $\varepsilon$ is sufficiently small. Finally, as long as $t<1$, the
regularized solution remains bounded with respect to $\varepsilon$ near
$\left(  0,t\right)  $ for $\varepsilon$ small enough; after $t=1$, the
asymptotic growth of order $\varepsilon^{-s}$ spills over into any
neighborhood of every point $(x,t)$ for $x\geq0$.

Collecting all previous estimates, we obtain the following $\mathrm{C}^{0}%
$-singular support and $\left(  a,\mathrm{C}^{0}\right)  $-singular spectrum
(for $a_{\varepsilon}(r)=\varepsilon^{r}$) of $u=\left[  u_{\varepsilon
}\right]  $:%
\[
{\mathcal{S}}_{{\mathcal{A}}}^{\mathrm{C}^{0}}(u)={\mathcal{S}}_{1}%
(u)\cup{\mathcal{S}}_{2}(u)\text{ with }{\mathcal{S}}_{1}(u)=\{(0,t):0\leq
t<1\}~;~{\mathcal{S}}_{2}(u)=\{(x,t):x\geq0,t\geq1\},
\]%
\[
{\mathcal{S}}_{{\mathcal{A}}}^{(a,\mathrm{C}^{0})}(u)=\left(  {\mathcal{S}%
}_{1}(u)\times\left\{  0\right\}  \right)  \cup\left(  {\mathcal{S}}%
_{2}(u)\times\left[  0,s\right]  \right)  .
\]
The $\mathrm{C}^{0}$-singularities (resp. $\left(  a,\mathrm{C}^{0}\right)
$-singularities) of $u$ are described by means of two sets: ${\mathcal{S}}%
_{1}(u)$ and ${\mathcal{S}}_{2}(u)$ (resp. ${\mathcal{S}}_{1}(u)\times\left\{
0\right\}  $ and ${\mathcal{S}}_{2}(u)\times\left[  0,s\right]  $). The set
${\mathcal{S}}_{1}(u)$ (resp. ${\mathcal{S}}_{1}(u)\times\left\{  0\right\}
$) is related to the data $\mathrm{C}^{0}$ (resp. $\left(  a,\mathrm{C}%
^{0}\right)  $)-singularity. The set ${\mathcal{S}}_{2}(u)$ (resp.
${\mathcal{S}}_{2}(u)\times\left[  0,s\right]  $) is related to the
singularity due to the nonlinearity of the equation giving the blow-up at
$t=1$. The blow-up locus is the edge $\left\{  x\geq0,t=1\right\}  $ of
${\mathcal{S}}_{2}(u)$ and the strength of the blow-up is measured by the
length $s$ of the fiber $\left[  0,s\right]  $ above each point of the blow-up
locus. This length is closely related to the diameter of the support of the
regularizing function $\chi_{\varepsilon}$ and depends essentially on the
nature of the blow-up: Changing simultaneously the scales of the
regularization and of the cut-off (i.e. replacing $\varepsilon$ by some
function $h(\varepsilon) \to0$ in the definition of $\varphi_{\varepsilon}$
and $\chi_{\varepsilon}$) does not change the fiber and characterizes a sort
of moderateness of the strength of the blow-up.
\end{example}


\subsection{The strength of a singularity and the sum law}

\label{Subsec:strentgthsum}

When studying the propagation and interaction of singularities in semilinear
hyperbolic systems, Rauch and Reed \cite{RauchReed} defined the strength of a
singularity of a piecewise smooth function. We recall this notion in the
one-dimensional case. Assume that the function $f: \mathbb{R }\to\mathbb{R}$
is smooth on $]-\infty, x_{0}]$ and on $[x_{0}, \infty[$ for some point
$x_{0}\in\mathbb{R}$. The \emph{strength of the singularity of $f$ at $x_{0}$}
is the order of the highest derivative which is still continuous across
$x_{0}$. For example, if $f$ is continuous with a jump in the first derivative
at $x_{0}$, the order is $0$; if $f$ has a jump at $x_{0}$, the order is $-1$.
Travers \cite{Travers} later generalized this notion to include delta
functions. Slightly deviating from her definition, but in line with the one of
\cite{RauchReed}, we define the strength of singularity of the $k$-th
derivative of a delta function at $x_{0}$, $\partial_{x}^{k}\delta(x-x_{0})$,
by $-k-2$.

The significance of these definitions is seen in the description of what Rauch
and Reed termed \emph{anomalous singularities} in semilinear hyperbolic
systems. We demonstrate the effect in a paradigmatic example, also due to
\cite{RauchReed}, the $(3\times3)$-system
\begin{equation}
\label{eq:RSsystem}%
\begin{array}
[c]{rclcl}%
(\partial_{t} + \partial_{x}) u(x,t) & = & 0, & \quad & u(x,0) = u_{0}(x)\\
(\partial_{t} - \partial_{x}) v(x,t) & = & 0, & \quad & v(x,0) = v_{0}(x)\\
\partial_{t} w(x,t) & = & u(x,t)v(x,t), & \quad & w(x,0) = 0
\end{array}
\end{equation}
Assume that $u_{0}$ has a singularity of strength $n_{1} \geq-1$ at $x_{1} =
-1$ and $v_{0}$ has a singularity of strength $n_{2} \geq-1$ at $x_{2} = +1$.
The characteristic curves emanating from $x_{1}$ and $x_{2}$ are straight
lines intersecting at the point $x=0$, $t = 1$. Rauch and Reed showed that, in
general, the third component $w$ will have a singularity of strength $n_{3} =
n_{1} + n_{2} + 2$ along the half-ray $\{(0,t):t \geq1\}$. This half-ray does
not connect backwards to a singularity in the initial data for $w$, hence the
term \emph{anomalous singularity}. The formula $n_{3} = n_{1} + n_{2} + 2$ is
called the \emph{sum law}. Travers extended this result to the case where
$u_{0}$ and $v_{0}$ were given as derivatives of delta functions at $x_{1}$
and $x_{2}$. We are going to further generalize this result to powers of delta
functions, after establishing the relation between the strength of a
singularity of a function $f$ at $x_{0}$ and the singular spectrum of
$f\ast\varphi_{\varepsilon}$.

We consider a function $f: \mathbb{R }\to\mathbb{R}$ which is smooth on
$]-\infty, x_{0}]$ and on $[x_{0}, \infty[$ for some point $x_{0}\in
\mathbb{R}$; actually only the local behavior near $x_{0}$ is relevant. We fix
a mollifier $\varphi_{\varepsilon}(x) = \frac{1}{\varepsilon}\varphi(\frac
{x}{\varepsilon})$ as in Example~\ref{Ex:deltam} and denote the corresponding
embedding of ${\mathcal{D}}^{\prime}(\mathbb{R})$ into the $(\mathcal{C}%
,\mathcal{E},\mathcal{P})$\emph{-}algebra ${\mathcal{A}}(\mathbb{R})$ by
$\iota$. In particular, $\iota(f) = [f\ast\varphi_{\varepsilon}]$.

If $f$ is continuous at $x_{0}$, then $\lim_{\varepsilon\to0}f\ast
\varphi_{\varepsilon}= f$ in $\mathrm{C}^{0}$. If $f$ has a jump $x_{0}$, this
limit does not exist in $\mathrm{C}^{0}$, but $\lim_{\varepsilon\to
0}\varepsilon^{r}f\ast\varphi_{\varepsilon}= 0$ in $\mathrm{C}^{0}$ for every
$r>0$. We have the following result.

\begin{proposition}
\label{Prop:strength} Let $x_{0} \in\mathbb{R}$. If $f: \mathbb{R }%
\to\mathbb{R}$ is a smooth function on $]-\infty, x_{0}]$ and on $[x_{0},
\infty[$ or $f(x) = \partial_{x}^{k}\delta(x-x_{0})$ for some $k \in
\mathbb{N}$, then the strength of the singularity of $f$ at $x_{0}$ is $-n$ if
and only if
\[
\Sigma_{(a,\mathrm{C}^{1}), x_{0}}\big(\iota(f)\big) = [0, n].
\]
Here $n\in\mathbb{N}$ and $a_{\varepsilon}(r) = \varepsilon^{r}$.
\end{proposition}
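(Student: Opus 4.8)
The plan is to compute directly what $\varepsilon^r f\ast\varphi_\varepsilon$ and its first derivative look like near $x_0$ for the two types of $f$, and to match the threshold at which $C^1$-convergence occurs with the combinatorial definition of strength. First I would reduce to a normal form. Near $x_0$ a function $f$ that is smooth on each side of $x_0$ with strength $-n$ (that is, with derivatives continuous up to order $n-2$ but a jump in the derivative of order $n-1$, using the convention that strength $-1$ means a jump in $f$ itself and strength $0$ a jump in $f'$) can be written as $f = g + c\,(x-x_0)_+^{\,n-1}$ for a function $g\in C^{n-1}$ near $x_0$ and a constant $c\neq 0$; for $n=1$ this reads $f = g + c\,H(x-x_0)$. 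For the delta case, $f = \partial_x^k\delta(x-x_0)$, the convention assigns strength $-k-2$, so there $n = k+2$. In both cases the singular part, after convolution with $\varphi_\varepsilon$, is $c\,\varepsilon^{n-1}$ times a fixed profile evaluated at $(x-x_0)/\varepsilon$: indeed $\big((x-x_0)_+^{\,n-1}\big)\ast\varphi_\varepsilon(x) = \varepsilon^{n-1}\Phi_{n-1}((x-x_0)/\varepsilon)$ with $\Phi_{n-1}(y) = (\,y_+^{\,n-1}\ast\varphi\,)(y)$, and $\big(\partial_x^k\delta(\cdot-x_0)\big)\ast\varphi_\varepsilon(x) = \varepsilon^{-k-1}\varphi^{(k)}((x-x_0)/\varepsilon)$, i.e. again $\varepsilon^{-(n-1)}$ times a fixed profile with $n=k+2$.

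Next I would feed this into the definition of $\Sigma_{(a,\mathrm C^1),x_0}$. The regular summand $g$ contributes nothing: $g\ast\varphi_\varepsilon\to g$ in $C^1$ near $x_0$ since $g\in C^{n-1}\subset C^1$ (for $n\ge 2$; for $n=1$ one argues directly that $\varepsilon^r g\ast\varphi_\varepsilon\to 0$ in $C^1$ for every $r>0$, while the jump of $f$ itself kills $r=0$), so by Corollary~\ref{SPSSpScor1} the singular spectrum of $\iota(f)$ at $x_0$ coincides with that of the singular summand. For the singular summand $c\,\varepsilon^{-(n-1)}\psi((x-x_0)/\varepsilon)$ with $\psi$ a fixed, smooth, compactly supported (or polynomially controlled) profile that is \emph{not} identically zero — and whose derivative is also not identically zero, which is where ``for each derivative there is a point at which it does not vanish'' enters, exactly as in Example~\ref{Ex:deltam} — one computes: multiplying by $\varepsilon^r$ gives $c\,\varepsilon^{r-(n-1)}\psi((x-x_0)/\varepsilon)$, whose $C^0$-norm on a fixed neighbourhood $V$ of $x_0$ is of order $\varepsilon^{r-(n-1)}$ and whose first derivative $c\,\varepsilon^{r-n}\psi'((x-x_0)/\varepsilon)$ has $C^0$-norm of order $\varepsilon^{r-n}$. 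Hence $\varepsilon^r u_\varepsilon\to 0$ in $C^1(V)$ as soon as $r>n$, while for $r\le n$ the first-derivative term does not go to zero (it blows up for $r<n$ and, for $r=n$, it has a nonzero limiting profile $c\,\psi'((x-x_0)/\varepsilon)$ — which on any neighbourhood of $x_0$ does not converge). By Theorem~\ref{SPSThmNx}(i)–(ii) this pins down $N_{(a,\mathrm C^1),x_0}(\iota(f)) = (n,+\infty)$, hence $\Sigma_{(a,\mathrm C^1),x_0}(\iota(f)) = [0,n]$. Running the same computation backwards gives the converse: if $\Sigma = [0,n]$ then the threshold $R_{x_0} = n$, which via the above scaling forces the jump to sit precisely in the derivative of order $n-1$ (respectively $k = n-2$ in the delta case), i.e. strength $-n$.

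The main obstacle, and the step deserving the most care, is the behaviour exactly at the critical exponent $r = n$ — showing that the limit fails to exist in $C^1$ there rather than merely that it might be nonzero, so that $n\in\Sigma_{x_0}$ and not $n\in N_{x_0}$. This is precisely the distinction illustrated by Example~\ref{SPSExam3} between $N = [1,+\infty)$ and $N = (1,+\infty)$, and it hinges on the fact that $\psi'((x-x_0)/\varepsilon)$, as $\varepsilon\to 0$, concentrates at $x_0$ and does not converge uniformly on any neighbourhood of $x_0$ (its profile simply rescales without decaying), together with the non-vanishing of $\psi'$; one also has to make sure that the convention offset is bookkept consistently ($+1$ between the order of the first discontinuous derivative and the label $n$ on the one hand, and the $-k-2$ convention for $\partial_x^k\delta$ on the other), which is why it is cleanest to treat the piecewise-smooth case and the delta case in parallel through the single unified profile description above. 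A secondary, more routine point is the reduction $f = g + c\,(x-x_0)_+^{\,n-1}$ and the verification that $\Phi_{n-1}$ and its derivative are not identically zero — the latter because $\int\varphi = 1\neq 0$ forces $\Phi_{n-1}$ to be genuinely of polynomial growth of degree $n-1$, hence with nonvanishing top derivatives; Corollary~\ref{SPSSpScor1} then does the gluing of regular and singular parts for free.
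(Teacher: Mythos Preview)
The argument is correct for the delta case $n\ge 2$ and essentially correct for $n=1$, but the case $n=0$ is missing, and the indexing in the piecewise-smooth part is confused in a way that obscures this. Observe that a function smooth on each side of $x_0$ always has strength $\ge -1$, so under the constraint $n\in\mathbb N$ the piecewise-smooth case only produces $n\in\{0,1\}$; your discussion of piecewise-smooth $f$ with $n\ge 2$ is vacuous. Your description ``strength $-n$ means continuous up to order $n-2$ with a jump in order $n-1$'' and the accompanying normal form $f=g+c(x-x_0)_+^{\,n-1}$ are only consistent with the paper's definition at $n=1$: strength $-n$ actually means continuous up to order $-n$ with a jump in order $1-n$, so for $n=0$ the jump is in $f'$ and the correct normal form would be $f=g+c(x-x_0)_+$ with $g\in C^1$, not $(x-x_0)_+^{-1}$. (The related sign slip --- writing $\varepsilon^{n-1}\Phi_{n-1}$ first and then $\varepsilon^{-(n-1)}\psi$ as if they agreed --- is harmless only because $n-1=0$ in the sole surviving piecewise-smooth case $n=1$.) With the corrected normal form your method does yield $\Sigma_{x_0}=\{0\}$ for $n=0$, since $(x-x_0)_+\ast\varphi_\varepsilon\to(x-x_0)_+$ in $C^0$ while its derivative $H\ast\varphi_\varepsilon$ is bounded but fails to converge in $C^0$ on any neighbourhood of $x_0$; but as written the case is absent.

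The paper's own proof sidesteps all of this by handling $n=0$, $n=1$, and $n\ge 2$ in three separate short steps, each by direct inspection of $f\ast\varphi_\varepsilon$ and $\partial_x f\ast\varphi_\varepsilon$, with no normal-form decomposition. Your profile approach is more systematic and would extend more readily to $\Sigma_{(a,\mathrm C^p),x_0}$ for $p>1$, but only once the bookkeeping is repaired.
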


\begin{proof}
When $n=0$, the function $f$ is continuous and its derivative has a jump at
$x_{0}$. From what was said before Proposition~\ref{Prop:strength} it follows
that $\Sigma_{(a,\mathrm{C}^{1}), x_{0}}\big(\iota(f)\big) = \{0\}$. When
$n=1$, the function $f$ has a jump itself at $x_{0}$ and its distributional
derivative contains a delta function part. Thus $\lim_{\varepsilon\to
0}\varepsilon^{r}f\ast\varphi_{\varepsilon}= 0$ in $\mathrm{C}^{0}$ for every
$r>0$ and $\lim_{\varepsilon\to0}\varepsilon^{r}\partial_{x}f\ast
\varphi_{\varepsilon}= 0$ in $\mathrm{C}^{0}$ for every $r>1$, and neither of
the two limits exists for smaller $r$. Therefore, $\Sigma_{(a,\mathrm{C}^{1}),
x_{0}}\big(\iota(f)\big) = [0,1]$. When $n\geq2$, $f(x) = \partial_{x}%
^{n-2}\delta(x-x_{0})$ and the assertion is straightforward.
\end{proof}

We shall now return to the model equation (\ref{eq:RSsystem}) and demonstrate
that the sum law remains valid when the initial data are powers of delta
functions. We work in suitable $(\mathcal{C},\mathcal{E},\mathcal{P})$%
\emph{-}algebras ${\mathcal{A}}(\mathbb{R})$ and ${\mathcal{A}}(\mathbb{R}%
^{2})$ in which the initial value problem (\ref{eq:RSsystem}) can be uniquely
solved (see the discussion at the beginning of
Subsection~\ref{Subsec:linearhyperbolic}). We still consider the scale
$a_{\varepsilon}(r) = \varepsilon^{r}$.

\begin{proposition}
Let $u_{0}(x) = \delta^{m}(x+1)$, $v_{0}(x) = \delta^{n}(x-1)$ for some
$m,n\in\mathbb{N}^{\ast}$. Let $w \in{\mathcal{A}}(\mathbb{R}^{2})$ be the
third component of the solution to problem (\ref{eq:RSsystem}). Then $w(x,t)$
vanishes at all points $(x,t)$ with $x\neq0$ as well as $(0,t)$ with $t < 1$,
and
\[
\Sigma_{(a,\mathrm{C}^{1}), (0,t)}\big(w\big) \subset[0, m+n]
\]
for $t\geq1$.
\end{proposition}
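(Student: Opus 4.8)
The plan is to solve problem (\ref{eq:RSsystem}) explicitly at the level of representatives and then read off the singular spectrum of $w$. First I would write down the first two components: since $u_0$ and $v_0$ are embedded via mollification, representatives are $u_\varepsilon(x,t) = \varphi_\varepsilon^m(x-t+1)$ and $v_\varepsilon(x,t) = \varphi_\varepsilon^n(x+t-1)$, because the transport equations $(\partial_t \pm \partial_x)$ simply translate the initial data along the characteristics. The characteristic through $x_1=-1$ for $u$ is the line $x = t-1$, and the one through $x_2 = +1$ for $v$ is $x = 1-t$; these meet only at $(0,1)$. Consequently $u_\varepsilon(x,t)v_\varepsilon(x,t)$ is, for $\varepsilon$ small, supported in a shrinking neighborhood of the single point $(0,1)$, and the third equation gives
\[
w_\varepsilon(x,t) = \int_0^t \varphi_\varepsilon^m(x-s+1)\,\varphi_\varepsilon^n(x+s-1)\,ds .
\]

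Next I would analyze this integral. Substituting $s = 1 + \sigma$ (so the two arguments become $x - \sigma$ and $x + \sigma$) and then rescaling by $\varepsilon$, one finds that $w_\varepsilon(x,t)$ is zero for small $\varepsilon$ whenever $x \neq 0$ (the supports of the two factors are then disjoint for $s$ in the relevant range) and also whenever $t<1$ (the upper limit of integration does not yet reach the overlap region). This proves the vanishing claims: $w|_V = 0$ in ${\mathcal A}(V)$ for any $V$ avoiding the ray $\{x=0,\,t\geq 1\}$, hence those points contribute nothing to the singular spectrum. For $x=0$ and $t\geq 1$, the substitution $\sigma = \varepsilon\tau$ gives
\[
w_\varepsilon(0,t) = \varepsilon^{1-(m+n)d}\int \varphi^m(-\tau)\,\varphi^n(\tau)\,d\tau \;+\;(\text{terms vanishing as }\varepsilon\to 0),
\]
so (in the one–dimensional case $d=1$) $w_\varepsilon$ near $(0,t)$ behaves like $\varepsilon^{1-(m+n)}$ times a fixed profile, and a similar scaling for the $x$–derivative $\partial_x w_\varepsilon$ produces one extra negative power of $\varepsilon$. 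Thus $\varepsilon^r w_\varepsilon|_V \to 0$ in $\mathrm{C}^1$ for every $r > m+n$ and every small $V\ni(0,t)$, giving $N_{(a,\mathrm{C}^1),(0,t)}(w) \supset (m+n,+\infty)$, hence $\Sigma_{(a,\mathrm{C}^1),(0,t)}(w)\subset[0,m+n]$.

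The routine part is the bookkeeping of exponents in the rescaled integrals; the main obstacle is controlling $\partial_x w_\varepsilon$ (and not just $w_\varepsilon$ itself), since differentiating under the integral sign hits both factors $\varphi_\varepsilon^m(x-s+1)$ and $\varphi_\varepsilon^n(x+s-1)$, and one must check that no cancellation improves the order — but since each derivative of $\varphi^m$ or $\varphi^n$ is a fixed function not identically zero, the worst-case order $1-(m+n)d-1$ is genuinely attained for at least one of the two pieces, and the $\mathrm{C}^1$ seminorm picks it up. I would also note that $t$ plays no role beyond $t\geq1$ in the leading asymptotics because, by finite propagation speed, the integrand is eventually concentrated at $s\approx 1$ regardless of how large $t$ is. Combining the vanishing outside the ray with the upper bound on the ray yields exactly the stated conclusion; one does not expect equality in general, which is why the statement is phrased as an inclusion (the precise value of $\Sigma$ could be smaller if $\int \varphi^m(-\tau)\varphi^n(\tau)\,d\tau$ or the analogous derivative integrals happen to vanish).
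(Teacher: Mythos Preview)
Your approach is essentially the same as the paper's: write down the explicit representative
\[
w_\varepsilon(x,t)=\int_0^t \varphi_\varepsilon^m(x+1-s)\,\varphi_\varepsilon^n(x-1+s)\,ds,
\]
use the compact support of $\varphi$ to see that $w_\varepsilon$ vanishes for small $\varepsilon$ whenever $x\neq 0$ or $t<1$, and then estimate the order of $w_\varepsilon$ and its first derivatives by observing that the effective $s$-integration has length $O(\varepsilon)$. The paper does exactly this, only it bounds the integration interval directly by $[x+1-\kappa\varepsilon,\,x+1+\kappa\varepsilon]$ rather than making the substitution $s=1+\varepsilon\tau$; the bookkeeping is identical.

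One small omission: for $\mathrm{C}^1$ on $\mathbb{R}^2$ you must also control $\partial_t w_\varepsilon(x,t)=\varphi_\varepsilon^m(x+1-t)\,\varphi_\varepsilon^n(x-1+t)$, not just $\partial_x w_\varepsilon$. This matters at the point $(0,1)$, where $\partial_t w_\varepsilon$ is of order $\varepsilon^{-(m+n)}$ (for $t>1$ it vanishes near $(0,t)$). The paper writes out all three of $w_\varepsilon$, $\partial_t w_\varepsilon$, $\partial_x w_\varepsilon$ and notes they are uniformly $O(\varepsilon^{-(m+n)})$; you should do the same. Your digression on whether cancellation could improve the order is unnecessary here, since the proposition only asserts the inclusion $\Sigma\subset[0,m+n]$, for which the upper bound on growth suffices.
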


\begin{proof}
A representative of $w$ is given by
\[
w_{\varepsilon}(x,t) = \int_{0}^{t}\varphi_{\varepsilon}^{m}(x+1-s)\varphi
_{\varepsilon}^{n}(x-1+s)\,ds.
\]
The fact that the mollifier $\varphi$ has compact support entails that
$w_{\varepsilon}(x,t)$ vanishes for sufficiently small $\varepsilon$ whenever
$x\neq0$ or $t < 1$. We have
\begin{align*}
w_{\varepsilon}(x,t)  &  = \int_{0}^{t} \dfrac{1}{\varepsilon^{m+n}}%
\;\varphi^{m}\Big(\dfrac{x+1-s}{\varepsilon}\Big)\varphi^{n}\Big(\dfrac
{x-1+s}{\varepsilon}\Big)\,ds,\\
\partial_{t}w_{\varepsilon}(x,t)  &  = \dfrac{1}{\varepsilon^{m+n}}%
\;\varphi^{m}\Big(\dfrac{x+1-t}{\varepsilon}\Big)\varphi^{n}\Big(\dfrac
{x-1+t}{\varepsilon}\Big),\\
\partial_{x}w_{\varepsilon}(x,t)  &  = \int_{0}^{t} \dfrac{m}{\varepsilon
^{m+n+1}}\; \varphi^{m-1}\Big(\dfrac{x+1-s}{\varepsilon}\Big)\varphi^{\prime
}\Big(\dfrac{x+1-s}{\varepsilon}\Big)\varphi^{n}\Big(\dfrac{x-1+s}%
{\varepsilon}\Big)\,ds\\
&  +\ \int_{0}^{t} \dfrac{n}{\varepsilon^{m+n+1}}\; \varphi^{m}\Big(\dfrac
{x+1-s}{\varepsilon}\Big)\varphi^{n-1}\Big(\dfrac{x-1+s}{\varepsilon
}\Big)\varphi^{\prime}\Big(\dfrac{x-1-s}{\varepsilon}\Big)\,ds.
\end{align*}
If the support of $\varphi$ is contained in an interval $[-\kappa,\kappa]$,
say, then the $t$-integrations extend at most from $x+1-\kappa\varepsilon$ to
$x+1+\kappa\varepsilon$ at fixed $x$. Therefore, all terms converge to zero
uniformly on $\mathbb{R}^{2}$ when multiplied by $\varepsilon^{r}$ with
$r>m+n$. This proves the assertion.
\end{proof}

Using the correspondence between the singular spectrum and the strength of a
singularity formulated in Proposition~\ref{Prop:strength}, as well as
Example~\ref{Ex:deltam}, we may say that the strength of the singularity of
$\delta^{m}(x+1)$ at $x_{0}=-1$ is $n_{1} = -m-1$, while the strength of the
singularity of $\delta^{n}(x-1)$ at $x_{0}=+1$ is $n_{2} = -n-1$. The strength
of the singularity of the solution $w$ at points $(0,t)$ with $t\geq1$ is
$-m-n = n_{1} + n_{2} + 2$ and is seen to satisfy the sum law.
%

\subsection{Regular Colombeau generalized functions}

\label{Subsec:regColombeau} The subsheaf ${\mathcal{G}}^{\infty}$ of
\emph{regular Colombeau functions} of the sheaf ${\mathcal{G}}$ is defined as
follows \cite{Ober1}: Given an open subset $\Omega$ of $\mathbb{R}^{d}$, the
algebra ${\mathcal{G}}^{\infty}(\Omega)$ comprises those elements $u$ of
${\mathcal{G}}(\Omega)$ whose representatives $(u_{\varepsilon})_{\varepsilon
}$ satisfy the condition
\begin{equation}
\label{eq:Ginfty}\forall K \Subset\Omega\ \exists m \in\mathbb{N}\ \forall
l\in\mathbb{N}:p_{K,l}(u_{\varepsilon}) = o(\varepsilon^{-m})\ \mathrm{as}%
\ \varepsilon\to0.
\end{equation}
The decisive property is that the bound of order $\varepsilon^{-m}$ is uniform
with respect to the order of derivation on compact sets. The algebra
${\mathcal{G}}^{\infty}(\Omega)$ satisfies ${\mathcal{G}}^{\infty}(\Omega)
\cap{\mathcal{D}}^{\prime}(\Omega) = \mathrm{C}^{\infty}(\Omega)$ and forms
the basis for the investigation of hypoellipticity of linear partial
differential operators in the Colombeau framework. We are going to
characterize the ${\mathcal{G}}^{\infty}$-property in terms of the
$\mathrm{C}^{\infty}$-singular spectrum. The scale $a$ is still given by
$a_{\varepsilon}(r) = \varepsilon^{r}$.

\begin{proposition}
Let $u \in{\mathcal{G}}(\Omega)$. Then $u$ belongs to ${\mathcal{G}}^{\infty
}(\Omega)$ if and only if
\[
\Sigma_{(a,\mathrm{C}^{\infty}),x}\big(u\big) \neq\mathbb{R}_{+}
\]
for all $x\in\Omega$.
\end{proposition}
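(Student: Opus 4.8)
The plan is to unravel both sides of the equivalence in terms of the valuations introduced in Subsection on the Colombeau case, and to exploit Proposition~\ref{SPSSharpAF}, which identifies $R_{(a,\mathrm{C}^{p}),x}(u)$ with $\nu_x^p(u) = \sup_{0\le l\le p}\nu_{x,l}(u)$. First I would recall from Remark~\ref{SPSRemSPS} that $\Sigma_{(a,\mathrm{C}^{\infty}),x}(u)\neq\mathbb{R}_{+}$ is equivalent to $0\in N_{(a,\mathrm{C}^{\infty}),x}(u)$, i.e.\ to the existence of a neighbourhood $V$ of $x$ on which $(u_\varepsilon|_V)_\varepsilon$ converges in $\mathrm{C}^{\infty}(V)$; but since $\mathrm{C}^{\infty}\cap{\mathcal{D}}'$ behaves well, the relevant content is really that the $\mathrm{C}^{\infty}(V)$-limit \emph{exists}, equivalently that $p_{K,l}(u_\varepsilon)$ stays bounded (indeed converges) for every $K\Subset V$ and every $l$. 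In valuation terms, by Theorem~\ref{SPSThmNx} and Proposition~\ref{SPSSharpAF}, $\Sigma_{(a,\mathrm{C}^{\infty}),x}(u)\neq\mathbb{R}_{+}$ forces $R_{(a,\mathrm{C}^{\infty}),x}(u)=\nu_x^{\infty}(u)=0$ together with the limit at $r=0$ being attained, and conversely.

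Next I would show the ``only if'' direction: if $u\in{\mathcal{G}}^{\infty}(\Omega)$, then for each $x$ pick $V\in\mathcal{V}_x$ relatively compact with $\overline V\Subset\Omega$; by \eqref{eq:Ginfty} there is a single $m\in\mathbb{N}$ with $p_{\overline V,l}(u_\varepsilon)=\mathrm{o}(\varepsilon^{-m})$ for \emph{all} $l$, whence $v_{\overline V,l}(u)\le m$ for all $l$, so $\nu_x^{\infty}(u)\le m<\infty$. Thus $R_{(a,\mathrm{C}^{\infty}),x}(u)=\nu_x^{\infty}(u)$ is finite; by the structure of $\Sigma_x(u)$ recorded after Theorem~\ref{SPSThmNx} (it is an interval of the form $[0,R)$, $[0,R]$, $\varnothing$, or $\mathbb{R}_+$), finiteness of $R_x(u)$ already rules out $\Sigma_{(a,\mathrm{C}^{\infty}),x}(u)=\mathbb{R}_{+}$. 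For the converse, suppose $\Sigma_{(a,\mathrm{C}^{\infty}),x}(u)\neq\mathbb{R}_{+}$ for every $x\in\Omega$; then $N_{(a,\mathrm{C}^{\infty}),x}(u)\neq\varnothing$ for every $x$, so $R_{(a,\mathrm{C}^{\infty}),x}(u)=\nu_x^{\infty}(u)<\infty$ for every $x\in\Omega$. Fix $K\Subset\Omega$. For each $x\in K$ there is a relatively compact $V_x\in\mathcal{V}_x$ and $m_x\in\mathbb{N}$ with $m_x>\nu_x^{\infty}(u)$, hence (by definition of $\nu_x^{\infty}$ and of the valuations) with $p_{\overline{V_x},l}(u_\varepsilon)=\mathrm{o}(\varepsilon^{-m_x})$ for all $l\in\mathbb{N}$ simultaneously. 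Cover $K$ by finitely many $V_{x_1},\dots,V_{x_N}$, set $m=\max_j m_{x_j}$; then $p_{K,l}(u_\varepsilon)\le\sum_j p_{\overline{V_{x_j}},l}(u_\varepsilon)=\mathrm{o}(\varepsilon^{-m})$ for every $l$, which is precisely condition \eqref{eq:Ginfty}. Hence $u\in{\mathcal{G}}^{\infty}(\Omega)$.

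The one genuinely delicate point — and the step I expect to be the main obstacle — is the passage from the \emph{pointwise} (in $x$) finiteness of $\nu_x^{\infty}(u)$ to a \emph{locally uniform} bound $m$ valid on a whole compact $K$ with the \emph{same} $m$ working for all orders $l$. This is exactly where the order of quantifiers in \eqref{eq:Ginfty} matters: the compactness argument above handles the uniformity in $x$, but one must be careful that for each chosen neighbourhood $V_{x}$ the integer $m_x$ dominates $\nu_{x,l}(u)$ for all $l$ at once, which is guaranteed because $\nu_x^{\infty}(u)=\sup_l\nu_{x,l}(u)$ is assumed finite and $m_x$ is chosen strictly above it; then $v_{\overline{V_x},l}(u)$ can be made $<m_x$ for all $l$ by shrinking $V_x$ if necessary (using the infimum defining $\nu_{x,l}$ and monotonicity in the neighbourhood), and a single such shrinking works for all $l$ because $\nu_{x,l}(u)\le\nu_x^{\infty}(u)<m_x$ uniformly in $l$. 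Once this uniformity bookkeeping is done, the finite subcover and the semi-norm estimate $p_{K,l}\le\sum_j p_{\overline{V_{x_j}},l}$ close the argument without further difficulty.
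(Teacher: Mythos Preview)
Your overall architecture---show $\Sigma_x(u)\neq\mathbb{R}_+$ is equivalent to the existence of a neighbourhood $V_x$ and an integer $m_x$ with $p_{\overline{V_x},l}(u_\varepsilon)=\mathrm{o}(\varepsilon^{-m_x})$ for \emph{all} $l$, then cover a compact $K$ by finitely many such $V_x$'s and take $m=\max m_{x_j}$---is exactly the paper's approach. Two points deserve correction, however.

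First, the opening paragraph is confused: Remark~\ref{SPSRemSPS} says $\Sigma_x(u)=\varnothing$ iff $0\in N_x(u)$, not $\Sigma_x(u)\neq\mathbb{R}_+$ iff $0\in N_x(u)$; and $\Sigma_x(u)\neq\mathbb{R}_+$ certainly does not force $R_x(u)=0$. The correct equivalence, which you do use later, is simply $\Sigma_x(u)\neq\mathbb{R}_+\Leftrightarrow N_x(u)\neq\varnothing\Leftrightarrow R_x(u)<\infty$.

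Second, and more substantively, the step you yourself flag as delicate is not actually resolved by your argument. From $\nu_x^{\infty}(u)=\sup_l\nu_{x,l}(u)<m_x$ you deduce $\nu_{x,l}(u)<m_x$ for every $l$, hence for each $l$ separately there is some $V_l$ with $v_{\overline{V_l},l}(u)<m_x$. Your claim that ``a single such shrinking works for all $l$'' does not follow from the uniform bound on the infima: a priori the neighbourhoods $V_l$ could shrink to $\{x\}$ as $l\to\infty$. Monotonicity in $V$ does not help, since you would need to intersect infinitely many $V_l$'s. The valuation route genuinely creates this difficulty.

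The fix---and this is precisely what the paper does---is to bypass the valuation decomposition entirely and use the definition of $N_{(a,\mathrm{C}^\infty),x}(u)$ directly. Since $R_x(u)<\infty$, pick any $r\in N_x(u)$; then by definition there exist a \emph{single} $V_x\in\mathcal{V}_x$ and $f\in\mathrm{C}^\infty(V_x)$ with $\varepsilon^r u_\varepsilon|_{V_x}\to f$ in $\mathrm{C}^\infty(V_x)$. Convergence in $\mathrm{C}^\infty(V_x)$ already means convergence of $p_{K,l}$ for all $K\Subset V_x$ and all $l$ simultaneously, so $p_{\overline{W},l}(u_\varepsilon)=\mathrm{O}(\varepsilon^{-r})=\mathrm{o}(\varepsilon^{-m_x})$ for any relatively compact $W\Subset V_x$ and every $l$. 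The uniformity in $l$ is built into the choice $\mathcal{F}=\mathrm{C}^\infty$; the detour through the individual $\nu_{x,l}$ only obscures this.
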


\begin{proof}
If $u\in{\mathcal{G}}^{\infty}(\Omega)$, $x\in\Omega$ and $V_{x}$ is a
relatively compact open neighborhood of $x$, property (\ref{eq:Ginfty}) says
that there is $m\in\mathbb{N}$ such that $\lim_{\varepsilon\to0}%
\varepsilon^{m} u_{\varepsilon}= 0$ in $\mathrm{C}^{\infty}(V_{x})$. Thus
$\Sigma_{(a,\mathrm{C}^{\infty}),x}\big(u\big) \neq\mathbb{R}_{+}$.
Conversely, if $\Sigma_{(a,\mathrm{C}^{\infty}),x}\big(u\big) \neq
\mathbb{R}_{+}$ we can find an open neighborhood $V_{x}$ of $x$ and
$m(x)\in\mathbb{N}$ such that $\lim_{\varepsilon\to0}\varepsilon^{r}
u_{\varepsilon}= 0$ in $\mathrm{C}^{\infty}(V_{x})$ for all $r\geq m$. Any
compact set $K$ can be covered by finitely many such neighborhoods. Letting
$m$ be the maximum of the numbers $m(x)$ involved, we obtain property
(\ref{eq:Ginfty}).
\end{proof}

In relation with regularity theory of solutions to nonlinear partial
differential equations, a further subalgebra of ${\mathcal{G}}(\Omega)$ has
been introduced in \cite{OberBiaVolume} -- the algebra of Colombeau functions
of \emph{total slow scale type}. It consists of those elements $u$ of
${\mathcal{G}}(\Omega)$ whose representatives $(u_{\varepsilon})_{\varepsilon
}$ satisfy the condition
\begin{equation}
\label{eq:slowscale}\forall K \Subset\Omega\ \forall r > 0\ \forall
l\in\mathbb{N}:p_{K,l}(u_{\varepsilon}) = o(\varepsilon^{-r})\ \mathrm{as}%
\ \varepsilon\to0.
\end{equation}
The term \emph{slow scale} refers to the fact that the growth is slower than
any negative power of $\varepsilon$ as $\varepsilon\to0$. This property can
again be characterized by means of the singular spectrum.

\begin{proposition}
An element $u \in{\mathcal{G}}(\Omega)$ is of total slow scale type if and
only if
\[
\Sigma_{(a,\mathrm{C}^{\infty}),x}\big(u\big) \subset\{0\}
\]
for all $x\in\Omega$.
\end{proposition}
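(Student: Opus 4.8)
The plan is to match the condition $\Sigma_{(a,\mathrm{C}^{\infty}),x}(u)\subset\{0\}$ directly against the defining property (\ref{eq:slowscale}) of total slow scale type. The first step is the elementary reformulation: since $\Sigma_{(a,\mathrm{C}^{\infty}),x}(u)=\mathbb{R}_{+}\setminus N_{(a,\mathrm{C}^{\infty}),x}(u)$, the inclusion $\Sigma_{(a,\mathrm{C}^{\infty}),x}(u)\subset\{0\}$ holds if and only if $(0,+\infty)\subset N_{(a,\mathrm{C}^{\infty}),x}(u)$, that is, for every $r>0$ there are $V\in\mathcal{V}_{x}$ and $f\in\mathrm{C}^{\infty}(V)$ with $\varepsilon^{r}u_{\varepsilon}|_{V}\to f$ in $\mathrm{C}^{\infty}(V)$. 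Here $\mathrm{C}^{\infty}$ is a sheaf of separated locally convex spaces and $a_{\varepsilon}(r)=\varepsilon^{r}$ satisfies hypothesis $(a)$ of Theorem~\ref{SPSThmNx}, so that theorem is available. (Alternatively one could reduce the statement to $\nu_{x}^{\infty}(u)=0$ for all $x$ via Proposition~\ref{SPSSharpAF}; I prefer the more direct route.)

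For the ``only if'' direction, assume $u$ is of total slow scale type and fix $x\in\Omega$ and $r>0$. Choose a relatively compact open neighbourhood $V\in\mathcal{V}_{x}$ with $\overline{V}\Subset\Omega$. By (\ref{eq:slowscale}), $p_{\overline{V},l}(u_{\varepsilon})=o(\varepsilon^{-r})$ for every $l\in\mathbb{N}$; since $p_{K',l}\le p_{\overline{V},l}$ whenever $K'\Subset V$, this gives $\varepsilon^{r}p_{K',l}(u_{\varepsilon})\to 0$ for all $K'\Subset V$ and all $l$, i.e. $\varepsilon^{r}u_{\varepsilon}|_{V}\to 0$ in $\mathrm{C}^{\infty}(V)$. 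Hence $r\in N_{(a,\mathrm{C}^{\infty}),x}(u)$, and as $r>0$ was arbitrary, $(0,+\infty)\subset N_{(a,\mathrm{C}^{\infty}),x}(u)$, so $\Sigma_{(a,\mathrm{C}^{\infty}),x}(u)\subset\{0\}$ for every $x$.

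For the ``if'' direction, assume $\Sigma_{(a,\mathrm{C}^{\infty}),x}(u)\subset\{0\}$ for all $x\in\Omega$ and fix $K\Subset\Omega$, $r>0$, $l\in\mathbb{N}$. Pick $r'$ with $0<r'<r$. For each $x\in K$ we have $r'\in N_{(a,\mathrm{C}^{\infty}),x}(u)$, so by Theorem~\ref{SPSThmNx}$(i)$ (applied with the exponent $r>r'$) there is an open neighbourhood $V_{x}$ of $x$ with $\varepsilon^{r}u_{\varepsilon}|_{V_{x}}\to 0$ in $\mathrm{C}^{\infty}(V_{x})$; shrinking, pick $V_{x}'\in\mathcal{V}_{x}$ with $\overline{V_{x}'}\Subset V_{x}$, so that $\varepsilon^{r}p_{\overline{V_{x}'},l}(u_{\varepsilon})\to 0$. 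By compactness there are finitely many points $x_{1},\dots,x_{N}$ with $K\subset V_{x_{1}}'\cup\dots\cup V_{x_{N}}'\subset\overline{V_{x_{1}}'}\cup\dots\cup\overline{V_{x_{N}}'}$, whence $p_{K,l}(u_{\varepsilon})\le\sum_{i=1}^{N}p_{\overline{V_{x_{i}}'},l}(u_{\varepsilon})$ and therefore $\varepsilon^{r}p_{K,l}(u_{\varepsilon})\to 0$, i.e. $p_{K,l}(u_{\varepsilon})=o(\varepsilon^{-r})$. Since $K$, $r$ and $l$ were arbitrary, $u$ satisfies (\ref{eq:slowscale}).

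The only delicate point is the ``if'' direction: taking $r\in N_{(a,\mathrm{C}^{\infty}),x}(u)$ outright only yields that $\varepsilon^{r}u_{\varepsilon}$ is bounded in $\mathrm{C}^{\infty}(V_{x})$ — its limit need not vanish — which would give $p_{K,l}(u_{\varepsilon})=O(\varepsilon^{-r})$ rather than $o(\varepsilon^{-r})$. Interposing the strictly smaller exponent $r'$ and invoking the ``convergence to $0$ for exponents beyond a point of $N_{x}(u)$'' clause of Theorem~\ref{SPSThmNx}$(i)$ repairs this; the remaining step, passing from the pointwise assertion to a uniform bound over $K$, is the routine finite-covering argument, using only that $p_{K,l}$ is dominated by the sum of the $p_{\overline{V_{x_{i}}'},l}$.
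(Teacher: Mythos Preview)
Your proof is correct and follows essentially the same approach as the paper's: both directions use a relatively compact neighbourhood for ``only if'' and a finite covering of $K$ for ``if''. Your version is in fact more careful than the paper's on one point: in the ``if'' direction the paper simply asserts the existence of neighbourhoods $V_x$ with $\varepsilon^r u_\varepsilon|_{V_x}\to 0$ without explaining why the limit is $0$ rather than some nonzero $f$, whereas you explicitly interpose $0<r'<r$ and invoke Theorem~\ref{SPSThmNx}(i) to force the limit to vanish---a genuine refinement, since $r\in N_x(u)$ alone only yields $O(\varepsilon^{-r})$.
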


\begin{proof}
If $u$ is of total slow scale type, $x\in\Omega$ and $V_{x}$ is a relatively
compact open neighborhood of $x$, property (\ref{eq:slowscale}) implies that
$\lim_{\varepsilon\rightarrow0}\varepsilon^{s}u_{\varepsilon}=0$ in
$\mathrm{C}^{\infty}(V_{x})$ for every $s>0$. Thus $\Sigma_{(a,\mathrm{C}%
^{\infty}),x}\big(u\big)\subset\{0\}$. To prove the converse, we take a
compact subset $K$ and $r>0$ and cover $K$ by finitely many neighborhoods
$V_{x}$ of points $x\in K$ such that $\lim_{\varepsilon\rightarrow
0}\varepsilon^{r}u_{\varepsilon}=0$ in $\mathrm{C}^{\infty}(V_{x})$. Then
property (\ref{eq:slowscale}) follows.
\end{proof}


\begin{thebibliography}{99}                                                                                               %


\bibitem {AraBia}Aragona,~J., Biagioni,~H.: Intrinsic definition of the
Colombeau algebra of generalized functions.\ Anal. Math. \textbf{17}, 75--132 (1991)

\bibitem {JFC3}Colombeau,~J.F.: Multiplication of Distributions: a tool in
mathematics, numerical engineering and theoretical physics. Lecture Notes in
Mathematics, vol. 1532. Springer-Verlag, Berlin (1992)

\bibitem {ADEmbed}Delcroix,~A: Remarks on the embedding of spaces of
distributions into spaces of Colombeau generalized functions. Novi Sad J.
Math. \textbf{35}(2), 27--40 (2005)

\bibitem {ADRReg}Delcroix,~A.: Regular rapidly decreasing nonlinear
generalized functions. Application to microlocal regularity. J. Math. Anal.
Appl. \textbf{327}, 564--584 (2007)

\bibitem {GaGrOb}Garetto,~C., Gramchev,~T., Oberguggenberger,~M.:
Pseudo-Differential operators and regularity theory. Electron J. Diff. Eqns.
\textbf{2005}(116), 1--43 (2005)

\bibitem {GarHorm}Garetto,~C., H\"{o}rmann,~G.: Microlocal analysis of
generalized functions: pseudodifferential techniques and propagation of
singularities. Proc. Edinb. Math. Soc. \textbf{48}, 603--629 (2005)

\bibitem {GKOS}Grosser,~M., Kunzinger,~M., Oberguggenberger,~M.,
Steinbauer,~R.: Geometric Theory of Generalized Functions with Applications to
General Relativity. Kluwer Academic Publ., Dordrecht (2001)

\bibitem {HorPDOT1}H\"{o}rmander,~L.: The Analysis of Linear Partial
Differential Operators, I:~ Distribution Theory and Fourier Analysis.
Grundlehren der mathematischen Wissenschaften, vol. 256, 2nd edition.
Springer-Verlag, Berlin (1990)

\bibitem {HorDeH}H\"{o}rmann,~G.,~De~Hoop,~M.V.: Microlocal analysis and
global solutions for some hyperbolic equations with discontinuous
coefficients. Acta Appl. Math. \textbf{67}, 173--224 (2001)

\bibitem {HorKun}H\"{o}rmann, G.,~Kunzinger, M.: Microlocal properties of
basic operations in Colombeau algebras. J.\ Math.\ Anal.\ Appl. \textbf{261},
254--270 (2001)

\bibitem {HorObPil}H\"{o}rmann, G., Oberguggenberger, M., Pilipovi\'{c}, S.:
Microlocal hypoellipticity of linear differential operators with generalized
functions as coefficients. Trans. Amer. Math. Soc. \textbf{358}, 3363--3383 (2006)

\bibitem {JAM0}Marti,~J.A.: $\left(  \mathcal{C},\mathcal{E},\mathcal{P}%
\right)  $-Sheaf structure and applications. In: Grosser, M., H\"{o}rmann, G.,
Kunzinger, M., Oberguggenberger, M. (Eds.) Nonlinear Theory of Generalized
Functions. Chapman \& Hall/CRC Research Notes in Mathematics, vol. 401,
pp.175--186. Boca Raton (1999)

\bibitem {JAM1}Marti,~J.A.: Nonlinear algebraic analysis of delta shock wave
to Burgers' equation. Pacific J. Math. \textbf{210}(1), 165--187 (2003)

\bibitem {JAM3}Marti,~J.A.: $\mathcal{G}^{L}$-microanalysis of generalized
functions. Integral Transf. Spec. Funct. \textbf{2}--\textbf{3}, 119--125 (2006)

\bibitem {NePiSc}Nedeljkov,~M., Pilipovi\'{c},~S., Scarpal\'{e}zos,~D.: The
linear theory of Colombeau generalized functions. Pitman Research Notes in
Mathematics, vol. 385. Longman Scientific {\&} Technical, Harlow (1998)

\bibitem {Ober1}Oberguggenberger,~M.: Multiplication of Distributions and
Applications to Partial Differential Equations. Pitman Research Notes in
Mathematics, vol. 259. Longman Scientific {\&} Technical, Harlow (1992)

\bibitem {OberBiaVolume}Oberguggenberger,~M.: Generalized solutions to
nonlinear wave equations. Matem\'{a}tica Contempor\^{a}nea \textbf{27},
169--187 (2004)

\bibitem {RauchReed}Rauch, J., Reed, M.: Jump discontinuities of semilinear,
strictly hyperbolic equations in two variables: Creation and propagation.
Comm. Math. Phys. \textbf{81}, 203-227 (1981)

\bibitem {Scarpa1}Scarpal\'{e}zos, D.: Colombeau's generalized functions:
topological structures; microlocal properties.\ A simplified point of view.
Bull. Cl. Sci. Math. Nat. Sci.\ Math. \textbf{25}, 89--114 (2000)

\bibitem {Schwartz1}Schwartz, L.: Th\'{e}orie des Distributions. Hermann,
Paris (1966)

\bibitem {Travers}Travers, K.: Semilinear hyperbolic systems in one space
dimension with strongly singular initial data. Electron. J. Diff. Eqns.
\textbf{1997}(14), 1--11 (1997)
\end{thebibliography}
\end{document}